\documentclass[11pt,reqno]{article}

\usepackage[utf8]{inputenc}
\usepackage[T1]{fontenc}
\usepackage{amsmath,amssymb}
\usepackage{amsthm}
\usepackage{helvet}
\usepackage{graphicx}
\usepackage{multicol}
\usepackage[bottom]{footmisc}
\usepackage{booktabs}
\usepackage{newtxtext}
\usepackage[varvw]{newtxmath}
\usepackage{mathbbol}
\usepackage{hyperref}
\usepackage{fullpage}

\usepackage{todonotes}

\newtheorem{theorem}{Theorem}[section]
\newtheorem{proposition}[theorem]{Proposition}

\newtheorem{corollary}[theorem]{Corollary}
\theoremstyle{definition}

\theoremstyle{remark}
\newtheorem{remark}[theorem]{Remark}

\makeindex

\begin{document}

\title{Almost-Hermitian and Hermitian metrics}
\author{Daniele Angella\thanks{Dipartimento di Matematica e Informatica, Università di Firenze, viale Morgagni 67/A, 50134 Firenze, Italy, \href{mailto:daniele.angella@unifi.it}{daniele.angella@unifi.it}}}

\maketitle

\begin{flushright}
\begin{footnotesize}
\begin{minipage}{.6\textwidth}
{\itshape La scienza -- così come la legge -- nasce sempre come esigenza di tutela e di liberazione dell'uomo, ma è facile si traduca in un nuovo strumento di oppressione. La tecnica -- così come la legge -- può dunque essere usata come strumento di liberazione se riusciamo ogni volta a comprendere i bisogni reali cui si deve rispondere, evitando di presumere o di accettare che la scienza e la legge servano a rispondere ai bisogni dei tecnici o della società che li delega.}
\smallskip
\begin{flushright}
Franco Basaglia, Franca Ongaro Basaglia, \emph{Crimini di pace}
\end{flushright}
\end{minipage}
\end{footnotesize}
\bigskip
\end{flushright}

\abstract{
We describe three problems in Hermitian geometry related to the scalar curvature of the Chern connection, aimed at identifying and constructing canonical metrics. They are chosen as toy examples where different techniques naturally interact: analytic methods for partial differential equations, the notion of momentum map, and variational methods.
}

\section{Introduction and motivation}
\label{sec:introduction}

As illustrated by the foundational Uniformization Theorem, encoding information in canonically attached metrics is a powerful tool for attacking classification problems. Here, ``canonical'' refers to uniqueness up to the corresponding automorphism group.
But how do canonical metrics emerge? Asking for prescribed curvature (e.g.\ constant curvature, in some sense) usually translates into solving partial differential equations on manifolds, which may be solvable when coupled with other constraints on the metric (e.g.\ coupling the Ricci-flat and K\"ahler conditions leads to a complex Monge-Amp\`ere equation, whose solution gives Calabi-Yau metrics). These elliptic equations also have parabolic counterparts, so that canonical metrics may emerge as the limiting behaviour of geometric flows.
Moreover, they can sometimes be interpreted as Euler-Lagrange equations of suitable functionals, whose critical points are precisely the metrics we are looking for: for instance, extremal K\"ahler metrics are defined as critical points of the Calabi functional, given by the $L^2$-norm of the scalar curvature, and include constant scalar curvature K\"ahler and K\"ahler-Einstein metrics. In the K\"ahler setting, the scalar curvature plays a particularly important role: by the foundational work of Fujiki and Donaldson, it arises as a momentum map for an infinite-dimensional Hamiltonian action on the space of K\"ahler structures, and constant scalar curvature K\"ahler metrics arise precisely as its zeroes.

In these lectures, we cover three problems in Hermitian geometry where all these approaches appear in a natural way. They are taken from joint works of the author with Simone Calamai, Francesco Pediconi, Carlo Scarpa, Cristiano Spotti, and Oluwagbenga Joshua Windare \cite{angella-calamai-spotti,angella-calamai-pediconi-spotti,angella-pediconi-scarpa-spotti-windare}.
Why this choice? First of all, because the author knows them well. He is conscious
that these results will not revolutionize complex geometry, but he regards them as
excellent toy examples from a pedagogical point of view, well suited to conveying the
techniques he learned during the preparation of those papers, thanks especially to
his coauthors.

In Section~\ref{sec:preliminaries}, we recall some preliminaries on the Chern connection. In particular, we introduce the Chern scalar curvature, the main character of these notes, and compute its conformal transformation rule together with those of some related geometric quantities.
In Section~\ref{sec:chern-yamabe}, we introduce an analogue of the Yamabe problem, looking for Hermitian metrics in a conformal class with constant scalar curvature with respect to the Chern connection. This problem was originally introduced in \cite{angella-calamai-spotti} and studied as a partial differential equation of Liouville type, which is not variational in general. Motivated by the Fujiki-Donaldson picture, we asked ourselves whether the Chern scalar curvature still admits an interpretation as a momentum map. This was studied in \cite{angella-calamai-pediconi-spotti} in a specific Hermitian non-K\"ahler setting, which we discuss in Section~\ref{sec:momentum}. In that work, it emerged that the Chern scalar curvature itself may not be the most natural quantity to study, but rather a deformation of it involving torsion terms. The corresponding deformed Yamabe-type problems arise as the Euler-Lagrange equations of a functional and were studied in \cite{angella-pediconi-scarpa-spotti-windare} using variational techniques, which we recall in Section~\ref{sec:twisted-yamabe}.
Some of these problems have counterparts or generalizations in the almost-Hermitian setting, which is in fact the natural framework for the momentum map picture; see e.g.\ \cite{delrio-simanca,lejmi-upmeier,barbaro-lejmi}.

Before continuing, the author would like to sadly highlight that all the mathematicians named in these notes (whether associated with a theorem, a geometric object, or a foundational result) studied in the Global North and are male. This reflects how far our field still is from inclusiveness and decolonization, and should motivate us to explore all the culture we have yet to discover.
The author would like to be able to apply the teaching of Basaglia and Ongaro Basaglia, who argue in the same volume cited above that any strategy should first recognize people (all people, not an abstract notion) and their needs as its ultimate aim, rather than the needs of those who practice it or of the institutions that legitimize it.
\section{Preliminaries and notation}
\label{sec:preliminaries}

\subsection{Setting and notation}

In these notes, we work on complex manifolds $X$, always assumed to be connected and, unless otherwise stated, compact. The superscript $n$ in $X^n$ denotes the complex dimension. We will denote by $J$ the associated almost-complex structure, which we usually assume integrable, even if many objects and results can be extended to the non-integrable case.
We usually assume functions and tensors to be smooth, even if the singular setting is also very interesting in some situations. 
We do not assume the existence of K\"ahler metrics or other special structures; instead, we consider general Hermitian metrics. We encode them in the datum of a real positive $(1,1)$-form $\omega$, while the underlying Riemannian metric is $g=\omega(\cdot,J\cdot)$. The tensor $h=g-\sqrt{-1}\,\omega$ then yields a Hermitian structure on the fibres of the holomorphic tangent bundle.
We always integrate with respect to the volume form $\frac{\omega^n}{n!}$ naturally associated with $g$ and the canonical orientation induced by the complex structure. For $1\leq p<+\infty$, the $L^p$-norm of a function $f$ is
$$ \|f\|_{L^p} := \left( \int_X |f|^p\, \frac{\omega^n}{n!}\right)^{\frac{1}{p}} . $$
We denote by the same symbol the induced $L^2$-pairings on forms and endomorphisms, extending the scalar product to tensors in the natural way; in particular, on endomorphisms, $\langle A,B\rangle_{L^2}=\int_X\mathrm{tr}(AB^{t})\,\frac{\omega^n}{n!}$, where $(\cdot)^t$ denotes the $g$-adjoint.

We denote by $\Delta:=dd^*+d^*d$ the Hodge-de Rham (``geometric'') Laplacian with respect to $\omega$, and write $\Delta_\omega$ when we need to avoid ambiguity. Recall that $d^*$ is the formal adjoint of $d$ with respect to the $L^2$-inner product induced by $g$ on the space of global smooth differential forms.
In particular, integration by parts gives $\int_X f\,\Delta u\,\frac{\omega^n}{n!} = \int_X g(df,du)\,\frac{\omega^n}{n!}$ for smooth functions $f$ and $u$ on $X$.
With our sign convention, for any smooth real-valued function $f$ on $X$, at a local maximum point $x_{\max}$ we have $(\Delta f)(x_{\max})\geq 0$. We denote by $\nabla$ the gradient operator on smooth functions induced by $g$, defined by $\nabla f := (df)^\sharp$. Here, $\sharp$ denotes the musical isomorphism induced by $g$, namely $df(X)=g(X,(df)^{\sharp})$ for any vector field $X$, and $\flat$ denotes its inverse.

For a survey on the problems treated in these lecture notes, as well as on other problems in complex geometry, see also \cite{angella-tardini-Budapest}, where interested readers can also find further details on the preliminaries recalled here.

\subsection{Chern connection and curvature}

We denote by $D$ the Levi-Civita connection of $g$. It is known that $DJ=0$ if and only if $\omega$ is K\"ahler, i.e., $d\omega=0$. In other words, in the general Hermitian setting, the Levi-Civita connection does not preserve the Hermitian structure. To avoid this issue, one considers connections $\nabla$ on the tangent bundle satisfying
$$ \nabla g = 0 \quad \text{and} \quad \nabla J=0 , $$
at the price of having non-zero torsion $T(X,Y):=\nabla_X Y-\nabla_Y X-[X,Y]$.
The space of Hermitian connections forms an affine space modeled on vector-valued real $(1,1)$-forms, so the choice is not unique. Some connections arise more naturally than others. In these notes, we focus on the Chern connection, namely the unique Hermitian connection whose $(0,1)$-part equals the Cauchy-Riemann operator $\bar\partial_{TX}$ on the holomorphic tangent bundle. It is denoted $\nabla^{\mathrm{Ch}}$ and is given by
$$ g(\nabla^{\mathrm{Ch}}_V W, Z) = g(D_V W, Z) - \frac{1}{2}\, d\omega(JV, W, Z) . $$
The corresponding geometric quantities (curvature, torsion, etc.) will be denoted with the superscript ``Ch'' when needed to avoid confusion and referred to as Chern quantities. We mention that other natural choices are available, e.g.\ the Bismut connection \cite{bismut,strominger} or, more generally, the one-parameter family of Gauduchon connections \cite{gauduchon-BUMI}.

The torsion of the Chern connection is given by
$$ -2g(T^{\mathrm{Ch}}(X,Y),Z) = d\omega(JX,Y,Z) + d\omega(X,JY,Z) , $$
and has vanishing $(1,1)$-component, i.e., $JT^{\mathrm{Ch}}(X,Y)=T^{\mathrm{Ch}}(JX,Y)=T^{\mathrm{Ch}}(X,JY)$. To make the notation lighter, we will usually forget the superscript ``Ch'' and denote $T:=T^{\mathrm{Ch}}$. Its trace
$$ \theta := \sum_{i=1}^{2n} g(T^{\mathrm{Ch}}(\cdot\,,e_i), e_i) , $$
where $(e_i, e_{n+i}:=Je_i)_{i=1,\dots,n}$ is a local $g$-orthonormal $J$-adapted frame of the tangent bundle, is called the Lee form and is characterized by
$$ d\omega^{n-1} = \theta \wedge \omega^{n-1} . $$

The Chern Laplacian $\Delta^{\mathrm{Ch}}$ acts on smooth functions $f$ as
$$ \Delta^{\mathrm{Ch}}f = g\!\left(\omega, -2\sqrt{-1}\,\partial\overline{\partial} f\right) = \mathrm{tr}_\omega\!\left(-2\,\sqrt{-1}\,\partial\overline{\partial} f\right) . $$
In local holomorphic coordinates $(z^i)_i$, writing
$$ \omega \stackrel{\mathrm{loc}}{=} h_{i\bar{j}} \, \sqrt{-1}\,dz^i\wedge d\bar{z}^j , $$
where $(h_{i\bar j})_{i,j}$ is a positive definite Hermitian matrix, the Chern Laplacian is given by
$$ \Delta^{\mathrm{Ch}}f \stackrel{\mathrm{loc}}{=} -2h^{\bar{j}i} \frac{\partial^2 f}{\partial z^i \partial \bar{z}^j} , $$
where $(h^{\bar{j}i})_{i,j}$ denotes the inverse matrix of $(h_{i\bar{j}})_{i,j}$.
Note that the Chern Laplacian is an elliptic differential operator of second order with no zero-order terms, and its second-order term coincides with that of the Hodge-de Rham Laplacian. In particular, $\Delta^{\mathrm{Ch}}-\Delta$ is a first-order operator, hence compact relative to $\Delta$, and so $\Delta^{\mathrm{Ch}}$ has the same Fredholm index as the Hodge-de Rham Laplacian; see \cite[page 388]{gauduchon-cras}. More precisely, by \cite[pages 502-503]{gauduchon-mathann}, we have the following relation.

\begin{proposition}[{\cite{gauduchon-mathann}}]
Let $X$ be a compact complex manifold, endowed with a Hermitian metric $\omega$ with Lee form $\theta$. Then, for every smooth function $f\in\mathcal C^\infty(X,\mathbb R)$,
\begin{equation}\label{eq:chern-laplacian}
\Delta^{\mathrm{Ch}}f = \Delta f + g(df,\theta) .
\end{equation}
\end{proposition}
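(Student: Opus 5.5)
The plan is to compute both sides as top-degree forms, multiplying by $\frac{\omega^n}{n!}$, and to compare them using the characterization $d\omega^{n-1}=\theta\wedge\omega^{n-1}$ of the Lee form. I will use the operator $d^c$ normalized so that $dd^c f = 2\sqrt{-1}\,\partial\overline\partial f$, namely $d^cf=\sqrt{-1}(\overline\partial-\partial)f$. Up to a sign depending on the convention for $J$ on $1$-forms, this is $d^cf=\pm J\,df$.

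The first ingredient is the pointwise identity, valid for any real $1$-form $\alpha$ on a Hermitian manifold,
$$ *\alpha = \pm\, J\alpha\wedge \frac{\omega^{n-1}}{(n-1)!} . $$
I will check it in a $J$-adapted orthonormal coframe $e^1,\dots,e^n,e^{n+1}=Je^1,\dots$ with $\omega=\sum_i e^i\wedge e^{n+i}$, using $\alpha=e^1$. Both sides then equal $e^{n+1}$ wedged with the product of the remaining pairs $e^i\wedge e^{n+i}$, $i\ge 2$, up to a sign that the computation fixes. This gives $*df=\epsilon\, d^cf\wedge\frac{\omega^{n-1}}{(n-1)!}$ for a universal sign $\epsilon$.

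Next I use $\Delta f=d^*df=-*d*df$, valid on functions in real dimension $2n$, and differentiate using the Lee form:
$$ d\Big(d^cf\wedge \omega^{n-1}\Big)= dd^cf\wedge\omega^{n-1} - d^cf\wedge\theta\wedge\omega^{n-1}. $$
Two further pointwise identities are needed, both checked in the same adapted frame:
$$ \beta\wedge\frac{\omega^{n-1}}{(n-1)!}=(\mathrm{tr}_\omega\beta)\,\frac{\omega^n}{n!}, $$
for real $(1,1)$-forms $\beta$, with $\mathrm{tr}_\omega$ taken as $g(\omega,\cdot)$, and
$$ d^cf\wedge\theta\wedge\frac{\omega^{n-1}}{(n-1)!}=\pm\, g(df,\theta)\,\frac{\omega^n}{n!}. $$
The second one I will get by applying the first to the $(1,1)$-part of $J\alpha\wedge\beta$, which has trace $\pm g(\alpha,\beta)$. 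Combining these and applying $*$ turns $-*d*df$ into $\mathrm{tr}_\omega(-2\sqrt{-1}\,\partial\overline\partial f)$ plus a multiple $\pm g(df,\theta)$, which is the claimed formula.

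The main obstacle is purely the bookkeeping of signs: the sign in $*\alpha=\pm J\alpha\wedge\omega^{n-1}/(n-1)!$, the sign of $d^c$ against $J$, and the sign of the torsion term all have to be consistent. To guard against errors I will fix the overall signs by two sanity checks. First, in the Kähler case $\theta=0$ the formula must reduce to $\Delta^{\mathrm{Ch}}=\Delta$, with $\Delta$ non-negative in the paper's convention; on flat $\mathbb C^n$ one has $-2\sum_i\partial_i\partial_{\bar i}=-\frac12\sum\partial_{x}^2+\dots$, which shows the trace normalization is right. Second, I will compare integrals: $\int_X\Delta f\,\frac{\omega^n}{n!}=0$, while $\int_X\Delta^{\mathrm{Ch}}f\,\frac{\omega^n}{n!}$ equals $\pm\int_X d^cf\wedge\theta\wedge\frac{\omega^{n-1}}{(n-1)!}$ after Stokes. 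This fixes the sign of the $g(df,\theta)$ term independently of the local computation.
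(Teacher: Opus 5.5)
Your proposal is correct and follows essentially the same route as the paper: write $*df = d^cf\wedge\frac{\omega^{n-1}}{(n-1)!}$ (the sign is $+1$ with $d^cf=-df\circ J$), differentiate using $d\omega^{n-1}=\theta\wedge\omega^{n-1}$, identify $-dd^cf\wedge\frac{\omega^{n-1}}{(n-1)!}$ with $\Delta^{\mathrm{Ch}}f\,\frac{\omega^n}{n!}$, and compare with $\Delta f\,\frac{\omega^n}{n!}=-d*df$. The only minor difference is that the paper handles the torsion term by reusing the Hodge-star identity, $-d^cf\wedge\theta\wedge\frac{\omega^{n-1}}{(n-1)!}=\theta\wedge *df=g(\theta,df)\,\frac{\omega^n}{n!}$, which is quicker than your $(1,1)$-trace argument; note also that your integral sanity check is not truly independent of this pointwise sign, since turning $\int_X d^cf\wedge\theta\wedge\frac{\omega^{n-1}}{(n-1)!}$ into $\int_X g(df,\theta)\,\frac{\omega^n}{n!}$ uses exactly that identity.
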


\begin{proof}
We introduce the real differential operator $d^c:=J^{-1}dJ$, where $J$ is extended to forms by $(J\varphi)(X_1,\dots,X_k):=\varphi(JX_1,\dots,JX_k)$ for $\varphi$ a $k$-form, following the convention of \cite{huybrechts}. In particular, on smooth functions $f$,
$$ d^cf = -df\circ J = - \sqrt{-1}\,(\partial f - \bar\partial f) , $$
so that $dd^cf = 2\sqrt{-1}\,\partial\bar\partial f$. By the trace definition of the Chern Laplacian,
$$ -dd^cf\wedge\frac{\omega^{n-1}}{(n-1)!} = \Delta^{\mathrm{Ch}}f\,\frac{\omega^n}{n!} . $$
We also use the standard identity (see e.g. \cite[Proposition 1.2.31]{huybrechts})
$$ *df = d^cf\wedge\frac{\omega^{n-1}}{(n-1)!} . $$
Applying $d$, we get
\begin{eqnarray*}
d*df
&=& dd^cf\wedge\frac{\omega^{n-1}}{(n-1)!} - d^cf\wedge d\left(\frac{\omega^{n-1}}{(n-1)!}\right) \\
&=& - \Delta^{\mathrm{Ch}}f\,\frac{\omega^n}{n!} - d^cf\wedge\theta\wedge\frac{\omega^{n-1}}{(n-1)!} \\
&=& - \Delta^{\mathrm{Ch}}f\,\frac{\omega^n}{n!} + \theta\wedge{*}df \\
&=& - \left( \Delta^{\mathrm{Ch}}f - g(\theta,df) \right)\frac{\omega^n}{n!} .
\end{eqnarray*}
On the other hand, $\Delta f=d^*df=-{*}d{*}df$ on smooth functions $f$, so that $\Delta f\,\frac{\omega^n}{n!}=-d*df$. Comparing the two expressions for $d*df$ yields the identity.
\end{proof}

The curvature tensor of $\nabla^{\mathrm{Ch}}$ is
$$ \Theta^{\mathrm{Ch}}(X,Y,Z,W) := g(\nabla^{\mathrm{Ch}}_X\nabla^{\mathrm{Ch}}_YZ - \nabla^{\mathrm{Ch}}_Y\nabla^{\mathrm{Ch}}_XZ - \nabla^{\mathrm{Ch}}_{[X,Y]}Z, W) . $$
In local holomorphic coordinates, the Chern curvature tensor has local expression
$$ \Theta^{\mathrm{Ch}} \stackrel{\mathrm{loc}}{=} \Theta_{i\bar{j} k\bar{\ell}} \,\sqrt{-1}\,dz^i\wedge d\bar{z}^j \otimes \sqrt{-1}\,dz^k\wedge d\bar{z}^\ell , $$
where
$$ \Theta_{i\bar{j}k\bar{\ell}} = -\frac{\partial^2 h_{k\bar{\ell}}}{\partial z^i\partial\bar{z}^j} + h^{\bar{q}p} \frac{\partial h_{p\bar{\ell}}}{\partial \bar{z}^j} \frac{\partial h_{k\bar{q}}}{\partial z^i} . $$
In the K\"ahler case, the Chern connection coincides with the Levi-Civita connection, and therefore the curvature tensor satisfies many symmetries: up to raising an index, it can be viewed as a real $(1,1)$-form with values in the endomorphisms of the holomorphic tangent bundle. In the general Hermitian setting, however, the Bianchi identity fails due to the presence of torsion, so several traces are possible, yielding different notions of Ricci curvature; see e.g.\ \cite{angella-calamai-spotti-2}.
We recall the local expression of the first Chern Ricci curvature form:
\begin{eqnarray*}
\mathrm{Ric}^{\mathrm{Ch}}(\omega)
&\stackrel{\mathrm{loc}}{=}& h^{\bar{\ell}k} \Theta^{\mathrm{Ch}}_{i\bar{j}k\bar{\ell}} \,\sqrt{-1}\,dz^i\wedge d\bar{z}^j \\
&=& -\frac{\partial^2 \log \det(h_{r\bar{s}})_{r,s}}{\partial z^i \partial\bar{z}^j}\,\sqrt{-1}\,dz^i\wedge d\bar{z}^j ,
\end{eqnarray*}
while a second Chern-Ricci curvature will briefly appear in Remark~\ref{rmk:second-chern-einstein}.
Note that $\mathrm{Ric}^{\mathrm{Ch}}(\omega)$ is a closed $(1,1)$-form representing the first Bott-Chern class $c_1^{BC}(X)\in H^{1,1}_{BC}(X,\mathbb{R})$, where
$$ H^{1,1}_{BC}(X,\mathbb{R}) := \frac{\left\{\text{$d$-closed real $(1,1)$-forms on $X$}\right\}}{\left\{\sqrt{-1}\,\partial\bar\partial\psi : \psi\in\mathcal{C}^\infty(X,\mathbb{R})\right\}} $$
is the real $(1,1)$-Bott-Chern cohomology of $X$. In Section~\ref{sec:chern-yamabe} we will also meet the Aeppli cohomology, defined as
$$ H^{\bullet,\bullet}_{A}(X) := \frac{\ker\partial\overline\partial}{{\rm im}\,\partial + {\rm im}\,\overline\partial} , $$
which is in duality with the Bott-Chern cohomology; see \cite{demailly-agbook, schweitzer}.

\subsection{Chern scalar curvature}

The main character in these notes is the Chern scalar curvature, obtained by tracing the curvature tensor twice in the following order:
$$ \mathrm{scal}^{\mathrm{Ch}}(\omega) := -2\sum_{i,j=1}^{n}\Theta^{\mathrm{Ch}}(e_i,Je_i,e_j,Je_j) . $$
In local holomorphic coordinates,
\begin{eqnarray*}
\mathrm{scal}^{\mathrm{Ch}}(\omega)
&\stackrel{\mathrm{loc}}{=}& 2\,h^{\bar{j}i}h^{\bar{\ell}k}\,\Theta^{\mathrm{Ch}}_{i\bar{j}k\bar{\ell}} \\
&=& -2\,h^{\bar{j}i}\,\frac{\partial^2\log\det(h_{r\bar{s}})}{\partial z^i\,\partial\bar{z}^j} \\
&=& \Delta^{\mathrm{Ch}}\log\det(h_{r\bar{s}}) .
\end{eqnarray*}
We choose this normalization, with coefficient $2$, so that the Chern scalar curvature and the Riemannian scalar curvature $\mathrm{scal}(g)$ are related by \cite[Equation~(33)]{gauduchon-mathann}, in the clean form stated in Proposition~\ref{prop:scalch-scalg} below.

There is another way to trace out a function from $\Theta^{\mathrm{Ch}}$, namely
$$ \widetilde{\mathrm{scal}}^{\mathrm{Ch}} \stackrel{\rm loc}{:=} 2h^{\bar{\ell}i} h^{\bar{j}k}\Theta_{i\bar{j}k\bar{\ell}} , $$
see \cite[Equation (19)]{gauduchon-mathann}, and will appear briefly in Equation~\eqref{tilde-scal} below.
We now collect the relations between the two Chern scalar curvatures $\mathrm{scal}^{\mathrm{Ch}}$ and $\widetilde{\mathrm{scal}}^{\mathrm{Ch}}$, and the Riemannian scalar curvature $\mathrm{scal}(g)$.

\begin{proposition}[{\cite{gauduchon-mathann}}]\label{prop:scalch-scalg}
Let $X$ be a compact complex manifold, endowed with a Hermitian metric $\omega$. Denote by $g$ the underlying Riemannian metric, by $\mathrm{scal}(g)$ its scalar curvature, and by $\widetilde{\mathrm{scal}}^{\mathrm{Ch}}(\omega)$ the other scalar trace of the Chern curvature. Then
\begin{equation}\label{eq:scaltilde-scalch}
\widetilde{\mathrm{scal}}^{\mathrm{Ch}}(\omega) = \mathrm{scal}^{\mathrm{Ch}}(\omega) - d^*\theta - |\theta|^2 ,
\end{equation}
and
\begin{equation}\label{eq:scal-scalch}
\mathrm{scal}(g) = \mathrm{scal}^{\mathrm{Ch}}(\omega) + d^*\theta - \frac{1}{4}\,|T^{\mathrm{Ch}}|^2 .
\end{equation}
In particular, when $\omega$ is K\"ahler, then $\mathrm{scal}(g)=\mathrm{scal}^{\mathrm{Ch}}(\omega)=\widetilde{\mathrm{scal}}^{\mathrm{Ch}}(\omega)$.
\end{proposition}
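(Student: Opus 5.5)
The two identities are of different nature, so I will prove them separately: \eqref{eq:scaltilde-scalch} by a local computation in holomorphic coordinates, and \eqref{eq:scal-scalch} by comparing the Levi-Civita and Chern connections.

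\emph{First identity.} I will work at a point $x_0$ in holomorphic coordinates with $h_{i\bar j}(x_0)=\delta_{ij}$. The difference of the two traces is
$$ \widetilde{\mathrm{scal}}^{\mathrm{Ch}}-\mathrm{scal}^{\mathrm{Ch}} = 2\sum_{i,k}\bigl(\Theta_{i\bar k k\bar i}-\Theta_{i\bar i k\bar k}\bigr). $$
From the local formula for $\Theta_{i\bar j k\bar\ell}$, this is $2\sum(\partial_i\partial_{\bar i}h_{k\bar k}-\partial_i\partial_{\bar k}h_{k\bar i})$ plus quadratic first-order terms. The second-order part should be rewritten as a divergence of the torsion. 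Locally, $T_{ik}{}^{p}=h^{\bar q p}(\partial_i h_{k\bar q}-\partial_k h_{i\bar q})$, and the Lee form is $\theta$, with $(1,0)$-part proportional to $h^{\bar q p}T_{ip\bar q}\,dz^i$. I will then compute $d^*\theta$ via $d^*=-{*}d{*}$, or more concretely as $-\mathrm{div}\,\theta^\sharp$ in terms of $\partial_{\bar i}\theta_i$ plus first-order corrections. I expect the second-order terms to match $-d^*\theta$ exactly, and the leftover quadratic terms (of the form $\sum|\partial h|^2$) to assemble into $-|\theta|^2$. That last assembly is the delicate part, because the coordinates cannot be chosen to kill $\partial h$ in the non-Kähler case; one can only kill the symmetric part, e.g.\ by also imposing $\partial_i h_{k\bar\ell}+\partial_k h_{i\bar\ell}=0$ at $x_0$. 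I will choose such coordinates so that $\partial h$ is pure torsion, and then carefully track the factors of $2$ between real and complex norms.

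\emph{Second identity.} I will use the difference tensor $A:=\nabla^{\mathrm{Ch}}-D$, given by $g(A_VW,Z)=-\tfrac12 d\omega(JV,W,Z)$. Writing the Riemann curvature as
$$ R^{\mathrm{Ch}} = R^{D} + (D A)\text{-terms} + [A,A]\text{-terms}, $$
I will take the appropriate double trace against $J$. The Chern scalar curvature is $\mathrm{scal}^{\mathrm{Ch}}=-2\sum\Theta(e_i,Je_i,e_j,Je_j)$, which is a trace of the curvature viewed as a $2$-form in its first two slots. To reach $\mathrm{scal}(g)$, I will combine this with the Riemannian Bianchi identity (the Kähler-type symmetries fail only up to $A$-terms). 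Then:
\begin{itemize}
\item the linear $DA$ terms trace to divergences of $\theta$, producing $d^*\theta$ (possibly mixed with $d^*$ of $Jd^*\omega$, which should reduce to $\theta$ up to $J$);
\item the quadratic terms give norms of $d\omega$ components, which I will express through $T^{\mathrm{Ch}}$ (since $T^{\mathrm{Ch}}$ captures the $(2,1)+(1,2)$ part of $d\omega$) and $\theta$.
\end{itemize}
One consistency check is to combine with the first identity: the $|\theta|^2$ contributions must cancel, leaving $-\tfrac14|T|^2$.

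\emph{Main obstacle and fallback.} The main obstacle is bookkeeping of constants and signs in the quadratic torsion terms. If the direct route becomes unwieldy, I would instead invoke Gauduchon's formula relating $\mathrm{scal}(g)$ to $\mathrm{scal}^{\mathrm{Ch}}$ and $\widetilde{\mathrm{scal}}^{\mathrm{Ch}}$ via the averaged trace $\tfrac12(\mathrm{scal}^{\mathrm{Ch}}+\widetilde{\mathrm{scal}}^{\mathrm{Ch}})$ plus torsion norms, and substitute the first identity. The Kähler statement is immediate, since $\theta=0$ and $T=0$ when $d\omega=0$.
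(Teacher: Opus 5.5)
Your proposal is correct, but it takes a different route from the paper in both halves.

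\textbf{First identity.} For \eqref{eq:scaltilde-scalch}, the paper only sketches a trace of the first Bianchi identity with torsion and defers to Gauduchon's Equation~(19). Your direct computation makes this step self-contained, and it does close. Work in coordinates with $h_{i\bar j}(x_0)=\delta_{ij}$ and $\partial_ih_{k\bar\ell}+\partial_kh_{i\bar\ell}=0$ at $x_0$ (Theorem~\ref{thm:holomorphic-normal}), so that $\partial_ih_{k\bar\ell}=\tfrac12T^\ell_{ik}$ there. The cleanest bookkeeping uses $\mathrm{tr}(DV)=\mathrm{tr}(\nabla^{\mathrm{Ch}}V)+\theta(V)$ to rewrite $-d^*\theta-|\theta|^2=\mathrm{tr}(\nabla^{\mathrm{Ch}}\theta^\sharp)$. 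Then both this quantity and $\widetilde{\mathrm{scal}}^{\mathrm{Ch}}-\mathrm{scal}^{\mathrm{Ch}}$ equal $2\sum_{i,k}(\partial_i\partial_{\bar i}h_{k\bar k}-\partial_k\partial_{\bar i}h_{i\bar k})-\tfrac12|T|^2$ at $x_0$. Inside $\mathrm{tr}(\nabla^{\mathrm{Ch}}\theta^\sharp)$, the two $|\theta|^2$-terms (one from $\partial_ih^{\bar ki}$, one from the Christoffel symbols) cancel. One caution on your expectation: the quadratic part of $\widetilde{\mathrm{scal}}^{\mathrm{Ch}}-\mathrm{scal}^{\mathrm{Ch}}$ in these coordinates is $-\tfrac12|T|^2$, not a multiple of $|\theta|^2$. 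The $-|\theta|^2$ you expect appears only after subtracting the first-order terms of $-d^*\theta$, which amount to $-\tfrac12|T|^2+|\theta|^2$.

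\textbf{Second identity.} For \eqref{eq:scal-scalch}, tracing $\Theta^{\mathrm{Ch}}=\mathrm{Rm}+(DA)+(A\ast A)$ against $J$ turns the $\mathrm{Rm}$-part, after Bianchi, into $2\sum_{\alpha,\beta}\mathrm{Rm}(e_\alpha,e_\beta,Je_\alpha,Je_\beta)$. This is a $*$-scalar curvature, not $\mathrm{scal}(g)$. Reaching $\mathrm{scal}(g)$ then requires the $J$-anti-invariant part of $\mathrm{Rm}$, i.e.\ $D^2J$ via $D_XJ=-[A_X,J]$. That is doable, but it is a second layer of computation.

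The paper avoids it by taking the Riemannian trace $\sum_{\alpha,\beta}g(\Theta^{\mathrm{Ch}}(e_\alpha,e_\beta)e_\beta,e_\alpha)$. This trace is $\widetilde{\mathrm{scal}}^{\mathrm{Ch}}$ on the Chern side and exactly $\mathrm{scal}(g)$ on the Levi-Civita side. The only inputs are $\sum_\alpha A(e_\alpha,e_\alpha)=\theta^\sharp$ and $\sum A_{\alpha\beta\gamma}A_{\beta\gamma\alpha}=-\tfrac14|T|^2$; the cyclic term $\sum\tau_{\alpha\beta\gamma}\tau_{\beta\gamma\alpha}$ vanishes because $T$ has no $(1,1)$-part. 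This gives $\widetilde{\mathrm{scal}}^{\mathrm{Ch}}=\mathrm{scal}(g)-2d^*\theta-|\theta|^2+\tfrac14|T|^2$, and \eqref{eq:scaltilde-scalch} finishes the argument.

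So what you call a consistency check is actually the efficient proof. The natural combination is your coordinate proof of the first identity together with the paper's Riemannian trace for the second.

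\textbf{Fallback.} Your fallback is misstated. In fact $\mathrm{scal}(g)-\tfrac12(\mathrm{scal}^{\mathrm{Ch}}+\widetilde{\mathrm{scal}}^{\mathrm{Ch}})=\tfrac32d^*\theta+\tfrac12|\theta|^2-\tfrac14|T|^2$, which contains a $d^*\theta$ term and not only torsion norms. Invoking Gauduchon's formula at that point would also amount to citing the statement itself.
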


\begin{proof}
Throughout the proof, $(e_\alpha)_{\alpha=1,\dots,2n}$ denotes a local $g$-orthonormal frame, and we set
$$ \tau_{\alpha\beta\gamma} := g(T^{\mathrm{Ch}}(e_\alpha,e_\beta),e_\gamma) $$
so that
$$ |T^{\mathrm{Ch}}|^2 = \sum_{\alpha,\beta,\gamma}\tau_{\alpha\beta\gamma}^2 . $$

\smallskip
\noindent{\itshape Step 1. The contorsion tensor and the Lee form.}
Write $\nabla^{\mathrm{Ch}}=D+A$. Since both connections are metric and $D$ is torsion-free, the tensor $A$ satisfies $g(A(X,Y),Z)=-g(A(X,Z),Y)$ and $A(X,Y)-A(Y,X)=T^{\mathrm{Ch}}(X,Y)$; these two conditions determine $A$ uniquely as
$$ g(A(X,Y),Z) = \tfrac{1}{2}\Big( g(T^{\mathrm{Ch}}(X,Y),Z) - g(T^{\mathrm{Ch}}(Y,Z),X) + g(T^{\mathrm{Ch}}(Z,X),Y) \Big) . $$
The trace of $A$ is related to the Lee form.
Indeed, set $a:=\sum_\alpha A(e_\alpha,e_\alpha)$. By the previous equation and the skew-symmetry of $T^{\mathrm{Ch}}$ in its arguments,
$$ g(a,e_\gamma) = \tfrac{1}{2}\sum_\alpha\Big( 0 - \tau_{\alpha\gamma\alpha} + \tau_{\gamma\alpha\alpha} \Big) = \sum_\alpha g(T^{\mathrm{Ch}}(e_\gamma,e_\alpha),e_\alpha) = \theta(e_\gamma) , $$
that is, $a=\theta^\sharp$.

For $\nabla^{\mathrm{Ch}}=D+A$ one has the standard formula (see e.g.~\cite[1.20]{besse})
\begin{eqnarray*}
\Theta^{\mathrm{Ch}}(X,Y)Z
&=& \mathrm{Rm}(X,Y)Z + (D_XA)(Y,Z) - (D_YA)(X,Z) \\
&& + A(X,A(Y,Z)) - A(Y,A(X,Z)) ,
\end{eqnarray*}
where $\mathrm{Rm}$ denotes the Riemannian curvature tensor of the Levi-Civita connection.

\smallskip
\noindent{\itshape Step 2. Comparison of the curvature traces.}
Fix $p\in X$ and choose the frame to be $D$-parallel at $p$.
Write $A_{\alpha\beta\gamma}:=g(A(e_\alpha,e_\beta),e_\gamma)$ and note that $A_{\alpha\beta\alpha}=-A_{\alpha\alpha\beta}$.
Tracing the equation above against $\sum_{\alpha,\beta}g(\,\cdot\,(e_\alpha,e_\beta)e_\beta,e_\alpha)$ gives:
\begin{eqnarray*}
\widetilde{\rm scal}^{\rm Ch}(\omega)
&=& \sum_{\alpha,\beta} g(\Theta^{\rm Ch}(e_\alpha,e_\beta)e_\beta, e_\alpha) \\
&=& \sum_{\alpha,\beta} g({\rm Rm}(e_\alpha,e_\beta)e_\beta, e_\alpha) \\
&& + \sum_{\alpha}D_{e_\alpha}\Big(\sum_\beta A_{\beta\beta\alpha}\Big) -\sum_{\beta}D_{e_\beta}\Big(\sum_\alpha A_{\alpha\beta\alpha}\Big) \\
&& + \sum_\alpha g(A(e_\alpha,a),e_\alpha) -\sum_{\alpha,\beta,\gamma}A_{\alpha\beta\gamma}A_{\beta\gamma\alpha} \\
&=& {\rm scal}(g) + \mathrm{div}(a) + \mathrm{div}(a) -|a|^2 -\sum_{\alpha,\beta,\gamma}A_{\alpha\beta\gamma}A_{\beta\gamma\alpha} \\
&=& {\rm scal}(g) - 2d^*\theta - |\theta|^2 - c,
\end{eqnarray*}
where we used $a=\theta^\sharp$ and $\mathrm{div}(\theta^\sharp)=-d^*\theta$, and we set the quadratic term
$$ c := \sum_{\alpha,\beta,\gamma}A_{\alpha\beta\gamma}A_{\beta\gamma\alpha} , $$
which we now compute explicitly.

\smallskip
\noindent{\itshape Step 3. The quadratic term.}
From the above formula characterizing the tensor $A$ in terms of the torsion, we have
\begin{eqnarray*}
A_{\alpha\beta\gamma}A_{\beta\gamma\alpha}
&=& \frac{1}{4} \left( \tau_{\alpha\beta\gamma} + \tau_{\gamma\alpha\beta} - \tau_{\beta\gamma\alpha} \right) \left( \tau_{\beta\gamma\alpha} + \tau_{\alpha\beta\gamma} - \tau_{\gamma\alpha\beta} \right) \\
&=& \frac{1}{4} \left( \tau_{\alpha\beta\gamma}^2 - (\tau_{\beta\gamma\alpha}-\tau_{\gamma\alpha\beta})^2 \right) \\
&=& \frac{1}{4} \left( \tau_{\alpha\beta\gamma}^2 - \tau_{\beta\gamma\alpha}^2 - \tau_{\gamma\alpha\beta}^2 + 2\, \tau_{\beta\gamma\alpha}\tau_{\gamma\alpha\beta} \right)
\end{eqnarray*}
Therefore
$$ c = \sum_{\alpha,\beta,\gamma} A_{\alpha\beta\gamma}A_{\beta\gamma\alpha} = \frac{1}{4}\Big( -|T^{\mathrm{Ch}}|^2 + 2c' \Big) , $$
where
$$ c' := \sum_{\alpha,\beta,\gamma}\tau_{\alpha\beta\gamma}\tau_{\beta\gamma\alpha} . $$

We claim that $c'=0$. Indeed, extend $g$ and $T^{\mathrm{Ch}}$ complex-bilinearly. Since $g$ is $J$-invariant, $g(Z,W)=0$ whenever $Z,W$ are both of type $(1,0)$, or both of type $(0,1)$. Since the $(1,1)$-part of $T^{\mathrm{Ch}}$ vanishes, $T^{\mathrm{Ch}}(Z,\bar{W})=0$. Hence the only non-vanishing components of $\tau$ are those of type $\big((1,0),(1,0),(0,1)\big)$ and their conjugates. But then, in $\tau_{\alpha\beta\gamma}\tau_{\beta\gamma\alpha}$, the first factor is non-zero only if $e_\alpha,e_\beta$ are of type $(1,0)$ and $e_\gamma$ is of type $(0,1)$; in that case the second factor $\tau_{\beta\gamma\alpha}$ has its first two arguments $e_\beta,e_\gamma$ of mixed type $(1,0),(0,1)$, hence vanishes by the above. Therefore
$$ c = -\tfrac{1}{4}\,|T^{\mathrm{Ch}}|^2 . $$

\smallskip
\noindent{\itshape Step 4. The relation between $\mathrm{scal}^{\mathrm{Ch}}$ and $\widetilde{\mathrm{scal}}^{\mathrm{Ch}}$.}
Recall that $\mathrm{scal}^{\mathrm{Ch}}$ and $\widetilde{\mathrm{scal}}^{\mathrm{Ch}}$ trace the same curvature tensor $\Theta_{i\bar{j}k\bar{\ell}}$ against different pairs of indices, $h^{\bar{j}i}h^{\bar{\ell}k}$ and $h^{\bar{\ell}i}h^{\bar{j}k}$ respectively. Since $\Theta_{i\bar{j}k\bar{\ell}}$ is symmetric in $(i,k)$ and in $(\bar{j},\bar{\ell})$ but not, in general, under the simultaneous exchange $(i,\bar{j})\leftrightarrow(k,\bar{\ell})$, the difference $\widetilde{\mathrm{scal}}^{\mathrm{Ch}}-\mathrm{scal}^{\mathrm{Ch}}$ is governed by the failure of this symmetry, measured by the first Bianchi identity for $\Theta^{\mathrm{Ch}}$ (which involves $\nabla^{\mathrm{Ch}}T^{\mathrm{Ch}}$, see e.g.~\cite[Section 1.16]{gauduchon-book}). Tracing this identity against $h^{\bar{j}i}h^{\bar{\ell}k}$ and expressing $\nabla^{\mathrm{Ch}}T^{\mathrm{Ch}}$ in terms of $\theta$ via the local formulas for $T^{\mathrm{Ch}}$ and $\theta$, one obtains, after contracting all indices,
$$ \widetilde{\mathrm{scal}}^{\mathrm{Ch}}(\omega) - \mathrm{scal}^{\mathrm{Ch}}(\omega) = -d^*\theta - |\theta|^2 , $$
which is \cite[Equation~(19)]{gauduchon-mathann}.

\smallskip
\noindent{\itshape Step 5. Conclusion and the K\"ahler case.}
Substituting $c=-\tfrac14|T^{\rm Ch}|^2$ in the formula found in Step~2, we get
$$ \widetilde{\rm scal}^{\rm Ch}(\omega) = {\rm scal}(g) - 2d^*\theta - |\theta|^2 + \tfrac{1}{4}\,|T^{\mathrm{Ch}}|^2 . $$
Substituting now the expression for $\widetilde{\mathrm{scal}}^{\mathrm{Ch}}$ found in Step~4, we get
$$ \mathrm{scal}^{\mathrm{Ch}}(\omega) - d^*\theta - |\theta|^2 = \mathrm{scal}(g) - 2\,d^*\theta - |\theta|^2 + \tfrac{1}{4}|T^{\mathrm{Ch}}|^2 , $$
and the terms in $|\theta|^2$ cancel, leaving
$$ \mathrm{scal}(g) = \mathrm{scal}^{\mathrm{Ch}}(\omega) + d^*\theta - \tfrac{1}{4}\,|T^{\mathrm{Ch}}|^2 ,$$
which is \eqref{eq:scal-scalch}.

\smallskip
\noindent{\itshape Step 6. K\"ahler case.}
The last statement follows by noticing that, for a K\"ahler metric, $T^{\mathrm{Ch}}=0$ and $\theta=0$, therefore the relation reduces to the equality between the Chern scalar curvatures and the Riemannian scalar curvature.
\end{proof}

\subsection{Conformal transformations of geometric quantities}
In these notes, we are interested in moving within a fixed conformal class
$$ \{\omega\} := \left\{ e^f\omega : f \text{ smooth real function on } X \right\} $$
of Hermitian metrics in order to prescribe curvature quantities.
When homotheties are natural symmetries for our problems, we can work up to their action and consider the slice given by fixing a normalization condition on $f$.
We collect here the transformation formulae for the geometric quantities associated to the Chern connection under conformal change; see \cite{angella-calamai-spotti,angella-pediconi-scarpa-spotti-windare}.

\begin{proposition}\label{prop:conf}
Let $X$ be a compact complex manifold of complex dimension $n$ endowed with a Hermitian metric $\omega$. Let $f\colon X \to \mathbb{R}$ be a smooth function, and set $\omega' := e^{f}\omega$. Denote by $T'$, $\theta'$, and $d^*_{g'}$ the torsion, the Lee form, and the codifferential associated with $\omega'$, respectively. Then
\begin{align}
\mathrm{scal}^{\mathrm{Ch}}(\omega') &= e^{-f} \Big(\mathrm{scal}^{\mathrm{Ch}}(\omega) + n\,\Delta f + n\,g(df, \theta) \Big) , \label{eq:scal-conf} \\
d^*_{g'}\theta' &= e^{-f} \Big( d^*\theta+(n-1)\Delta f -(n-1) g(df, \theta) - (n-1)^2|\nabla f|^2 \Big) , \label{eq:theta-conf}\\
|T'|_{g'}^2 &= e^{-f}\Big(|T|^2 +4g(df, \theta) +2(n-1)|\nabla f|^2\Big) , \label{eq:T-conf}
\end{align}
where all quantities on the right-hand sides are computed with respect to $\omega$.
\end{proposition}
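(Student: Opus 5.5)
The plan is to derive each of the three identities from three basic conformal rules under $\omega'=e^f\omega$ (so $g'=e^fg$ and the volume form is $e^{nf}\frac{\omega^n}{n!}$):
\begin{itemize}
\item the Lee form transforms as $\theta'=\theta+(n-1)df$, because $d(\omega')^{n-1}=\big((n-1)df+\theta\big)\wedge(\omega')^{n-1}$ and wedging with $\omega^{n-1}$ is injective on $1$-forms;
\item pointwise norms of $1$-forms transform as $|\alpha|_{g'}^2=e^{-f}|\alpha|_g^2$;
\item the codifferential on $1$-forms in real dimension $2n$ transforms as $d^*_{g'}\alpha=e^{-f}\big(d^*\alpha-(n-1)g(df,\alpha)\big)$. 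This is the standard rule $d^*_{e^{f}g}\alpha=e^{-f}\big(d^*\alpha-\tfrac{m-2}{2}g(df,\alpha)\big)$ with $m=2n$.
\end{itemize}

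\emph{Scalar curvature \eqref{eq:scal-conf}.} In local holomorphic coordinates $h'_{i\bar j}=e^fh_{i\bar j}$, so $\log\det h'=\log\det h+nf$. The Chern Laplacian scales as $\Delta^{\mathrm{Ch}}_{\omega'}=e^{-f}\Delta^{\mathrm{Ch}}_\omega$, because it has no first-order part in coordinates. Using the expression $\mathrm{scal}^{\mathrm{Ch}}=\Delta^{\mathrm{Ch}}\log\det(h_{r\bar s})$, we get $\mathrm{scal}^{\mathrm{Ch}}(\omega')=e^{-f}\big(\mathrm{scal}^{\mathrm{Ch}}(\omega)+n\Delta^{\mathrm{Ch}}f\big)$. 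Then \eqref{eq:chern-laplacian} converts $\Delta^{\mathrm{Ch}}f$ into $\Delta f+g(df,\theta)$, which gives \eqref{eq:scal-conf}.

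\emph{Codifferential of the Lee form \eqref{eq:theta-conf}.} Apply the codifferential rule to $\theta'=\theta+(n-1)df$:
\[
d^*_{g'}\theta'=e^{-f}\Big(d^*\theta+(n-1)\Delta f-(n-1)g(df,\theta)-(n-1)^2|\nabla f|^2\Big),
\]
using $d^*df=\Delta f$. This is exactly \eqref{eq:theta-conf}. The only care needed is to verify the sign and coefficient of the codifferential rule. I would check it by duality: for every $u$,
\[
\int_X u\, d^*_{g'}\alpha\, e^{nf}\frac{\omega^n}{n!}=\int_X g'(du,\alpha)_{g'}\,e^{nf}\frac{\omega^n}{n!}=\int_X e^{(n-1)f}g(du,\alpha)\,\frac{\omega^n}{n!},
\]
and then integrate by parts on the right-hand side.

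\emph{Torsion norm \eqref{eq:T-conf}.} This is the step I expect to be the main obstacle, because the torsion itself changes, not only the metric. From the formula $-2g(T(X,Y),Z)=d\omega(JX,Y,Z)+d\omega(X,JY,Z)$ and $d\omega'=e^f(d\omega+df\wedge\omega)$, I obtain $T'=T+T_f$, where
\[
-2g(T_f(X,Y),Z)=(df\wedge\omega)(JX,Y,Z)+(df\wedge\omega)(X,JY,Z).
\]
Expanding, $T_f$ is built from $df$, $J$ and $g$ (essentially $T_f(X,Y)=\tfrac12\big(df(X)Y-df(Y)X\big)$ adjusted by $J$-terms that kill the $(1,1)$ part). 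As a $(1,2)$-tensor, $|T'|^2_{g'}=e^{-f}|T+T_f|_g^2=e^{-f}\big(|T|^2+2\langle T,T_f\rangle+|T_f|^2\big)$. I then compute the two remaining terms in a $J$-adapted orthonormal frame:
\begin{itemize}
\item the cross term $\langle T,T_f\rangle$ only sees traces of $T$, so it is proportional to $g(\theta,df)$, and the coefficient $4$ should come out;
\item $|T_f|^2$ should be a dimensional constant times $|df|^2$, expected to be $2(n-1)$.
\end{itemize}
As a consistency check, tracing $T'$ must reproduce $\theta'=\theta+(n-1)df$. Alternatively, the whole identity can be cross-checked by combining \eqref{eq:scal-scalch} with the classical conformal rule for the Riemannian $\mathrm{scal}(g)$ and the two formulas already proved, which determines $|T'|^2$ and avoids the tensor computation.
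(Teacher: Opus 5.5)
Your proposal is correct. For \eqref{eq:scal-conf} and \eqref{eq:theta-conf} it is exactly the paper's argument. The paper unpacks your codifferential rule through $\mathrm{div}_{g'}V=\mathrm{div}_gV+n\,df(V)$, which is the same computation as your duality check.

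For \eqref{eq:T-conf} you also use the paper's decomposition $|T'|^2_{g'}=e^{-f}\big(|T|^2+2\langle T,T_f\rangle+|T_f|^2\big)$. The difference is that the paper works in holomorphic coordinates. There $T^k_{ij}=h^{\bar\ell k}(\partial_ih_{j\bar\ell}-\partial_jh_{i\bar\ell})$ immediately gives the correction $S^k_{ij}=\partial_if\,\delta^k_j-\partial_jf\,\delta^k_i$, and both contractions are one-line index computations using $T^p_{ip}=\theta_i$.

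Your real-frame route produces the same tensor, $T_f(X,Y)=\tfrac12\big(df(X)Y-df(Y)X-df(JX)JY+df(JY)JX\big)$, and the coefficients you anticipate do come out:
\begin{itemize}
\item Using $g(T(e_a,e_b),Je_b)=-g(T(Je_a,e_b),e_b)$, each of the four terms of $T_f$ contributes $\tfrac12 g(df,\theta)$ to $\langle T,T_f\rangle$, so $2\langle T,T_f\rangle=4g(df,\theta)$.
\item A direct expansion gives $|T_f|^2_g=\tfrac14(8n-8)|\nabla f|^2=2(n-1)|\nabla f|^2$.
\end{itemize}
This is correct, just more bookkeeping with $J$ than the coordinate version.

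Your alternative is genuinely different and also closes the argument on its own. Take the classical law $\mathrm{scal}(e^fg)=e^{-f}\big(\mathrm{scal}(g)+(2n-1)\Delta f-\tfrac{(n-1)(2n-1)}{2}|\nabla f|^2\big)$, with the positive Laplacian. In $4\big(\mathrm{scal}^{\mathrm{Ch}}(\omega')+d^*_{g'}\theta'-\mathrm{scal}(g')\big)$ the $\Delta f$ terms cancel, and what remains is exactly the right-hand side of \eqref{eq:T-conf}. This is consistent with \eqref{eq:conf-mu-t} at $t=2n-1$.

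That shortcut avoids all tensor algebra, but it rests on \eqref{eq:scal-scalch}, whose proof in the paper defers the key trace identity (Step~4) to Gauduchon. The direct computation is self-contained and shows where the cross term $4g(df,\theta)$ comes from.
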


\begin{proof}
We work in local holomorphic coordinates $(z^i)_i$. Writing
$$
\omega = h_{i\bar{j}}\,\sqrt{-1}\,dz^i \wedge d\bar{z}^{j} , \quad
\omega' = h'_{i\bar{j}}\,\sqrt{-1}\,dz^i \wedge d\bar{z}^{j} ,
$$
we have $h'_{i\bar{j}} = e^f h_{i\bar{j}}$ and $h'{}^{\bar{j}i} = e^{-f}h^{\bar{j}i}$.

\smallskip
\noindent{\itshape Chern scalar curvature.}
A direct computation gives
\begin{eqnarray*}
\mathrm{scal}^{\mathrm{Ch}}(\omega')
&=& 2{h'}^{\bar j i} \left( -\frac{\partial^2}{\partial z^i \partial \bar z^j} \log \det (h'_{\ell\bar m})_{\ell,m} \right) \\
&=& e^{-f} \Big(\mathrm{scal}^{\mathrm{Ch}}(\omega) -2nh^{\bar{j}i}\frac{\partial^2 f}{\partial z^i\partial \bar{z}^j}\Big) \\
&=&  e^{-f} \Big(\mathrm{scal}^{\mathrm{Ch}}(\omega) +  n \Delta^{\rm Ch}f \Big) \\
&=&  e^{-f} \Big(\mathrm{scal}^{\mathrm{Ch}}(\omega) +  n \Delta f + n g(df, \theta) \Big) .
\end{eqnarray*}

\smallskip
\noindent{\itshape Chern torsion norm.}
For the conformal transformation of $|T'|^2_{g'}$, note first that the torsion tensor transforms without any conformal factor: in local holomorphic coordinates,
$$ T^k_{ij}=h^{\bar{\ell}k}\big(\partial_i h_{j\bar{\ell}}-\partial_j h_{i\bar{\ell}}\big) , $$
and then we get
$$ {T'}^k_{ij} = T^k_{ij} + \frac{\partial f}{\partial z^i}\delta^k_j - \frac{\partial f}{\partial z^j}\delta^k_i . $$
Setting $S^r_{ip}:=\frac{\partial f}{\partial z^i}\delta^r_p-\frac{\partial f}{\partial z^p}\delta^r_i$ so that ${T'}^k_{ij} = T^k_{ij}+S^k_{ij}$, and denoting by $\langle\cdot,\cdot\rangle$ the Hermitian pairing $\langle A,B\rangle:=2h^{\bar{j}i}h^{\bar{q}p}h_{r\bar{s}}A^r_{ip}\overline{B^s_{jq}}$, for which $|T|^2=\langle T,T\rangle$, we get
$$ |T'|^2_{g'} = e^{-f}\Big( |T|^2 + 2\,\mathrm{Re}\langle T,S\rangle + \langle S,S\rangle \Big) . $$
Expanding $\langle S,S\rangle$ into four terms, the two contractions $h^{\bar{q}p}h_{p\bar{q}}=h^{\bar{j}i}h_{i\bar{j}}=n$ contribute $n|\nabla f|^2$ each, while the two mixed ones contribute $-|\nabla f|^2$ each, so that $\langle S,S\rangle=2(n-1)|\nabla f|^2$. Similarly, using $T^p_{ip}=\theta_i$ and $T^i_{ip}=-\theta_p$, one gets $\langle T,S\rangle=4h^{\bar{j}i}\theta_i\overline{\frac{\partial f}{\partial z^j}}$, whence $2\,\mathrm{Re}\langle T,S\rangle=4g(\nabla f,\theta^\sharp)$. Substituting gives \eqref{eq:T-conf}.

\smallskip
\noindent{\itshape Codifferential of the Lee form.}
For the Lee form, we have
$$ \theta' = \theta + (n-1)\,df , $$
which follows either directly from computing $d(\omega')^{n-1}$, or by contracting the indices $k$ and $j$ in the transformation formula for ${T'}^k_{ij}$.
Recall that the divergence of a vector field $V$ with respect to $g$ is $\mathrm{div}_g V := \mathrm{tr}(DV)$, so that $\mathrm{div}_g(\nabla f)=-\Delta f$ for a smooth function $f$; equivalently, $\mathrm{div}_g V$ is the unique function satisfying $d^*_g(V^\flat)=-\mathrm{div}_g V$.
We also use the identities
$$
\mathrm{div}_{g'}V = \mathrm{div}_{g}V + n\,df(V) , \qquad \mathrm{div}_{g}(\phi V) = \phi\,\mathrm{div}_{g}V + d\phi(V) ,
$$
the latter being a straightforward computation, and see, e.g., \cite[Theorem 1.159]{besse} for the conformal law for the divergence in the former equation.
Then we compute
\begin{eqnarray*}
d^*_{g'}\theta'
&=& -\mathrm{div}_{g'}((\theta')^{\sharp_{g'}}) \\
&=& -\mathrm{div}_{g'}\left(e^{-f}(\theta^{\sharp}+(n-1)\nabla f)\right) \\
&=& -\mathrm{div}_{g}\!\left(e^{-f}\left(\theta^{\sharp}+(n-1)\nabla f\right)\right) - ne^{-f}df\!\left(\theta^{\sharp}+(n-1)\nabla f\right) \\
&=& e^{-f}\!\left(d^*\theta+(n-1)\Delta f\right) + e^{-f}(n-1)|\nabla f|^2 + e^{-f}g(\nabla f,\theta^{\sharp}) \\
&& - ne^{-f}\!\left(g(\nabla f,\theta^{\sharp})+(n-1)|\nabla f|^2\right) \\
&=& e^{-f}\!\left(d^*\theta+(n-1)\Delta f -(n-1)g(\nabla f,\theta^{\sharp})-(n-1)^2|\nabla f|^2\right) ,
\end{eqnarray*}
which concludes the proof.
\end{proof}

\begin{remark}\label{rmk:conformal-formulas-not-variational}
We notice here that the term $g(df, \theta)$ is the one responsible for the failure of variational techniques, see Remark~\ref{rmk:not-variational}.
\end{remark}

\subsection{Special Hermitian metrics}

We conclude this preliminary section by recalling some classes of Hermitian metrics of particular interest \cite{gray-hervella}. In these notes, we will meet the following classes.
\begin{itemize}
\item Gauduchon metrics are characterized by
$$ \partial\bar\partial\omega^{n-1}=0 , $$
equivalently $d^*\theta=0$. In this case, $\int_X \Delta^{\mathrm{Ch}}f\,\frac{\omega^n}{n!}=0$ for every smooth function $f$, i.e., we have a Green identity for the Chern Laplacian.
\item Balanced metrics are characterized by
$$ d\omega^{n-1}=0 , $$
equivalently $\theta=0$. This implies $\Delta^{\mathrm{Ch}} f=\Delta f$ for every smooth function $f$. Note that admitting a balanced metric is a property invariant under bimeromorphisms \cite{alessandrini-bassanelli}.
\item Locally conformally K\"ahler metrics, which will appear in Section~\ref{sec:momentum}, are characterized by
$$ d\omega=\frac{1}{n-1}\theta\wedge\omega , \qquad d\theta=0 . $$
In other words, the Lee form encodes local conformal transformations that make the metric K\"ahler.
This can be interpreted as an equivariant K\"ahler geometry with respect to homotheties: indeed, there exists a covering space endowed with a conformal K\"ahler metric on which the deck transformation group acts by homotheties.
The factor $1/(n-1)$ is chosen so that $\theta$ coincides with the Lee form as defined above.
\end{itemize}
While balanced and locally conformally K\"ahler metrics are not always available on a compact complex manifold, the following foundational result of Gauduchon plays a key role in what follows.

\begin{theorem}[{\cite[Th\'eor\`eme 1]{gauduchon-cras}}]
On every compact complex manifold, every conformal class of Hermitian metrics contains a Gauduchon metric, unique up to homothety.
\end{theorem}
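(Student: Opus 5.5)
The strategy is to turn the Gauduchon condition inside a conformal class into a linear elliptic equation for the conformal factor, and then to solve it using the Fredholm theory of the Chern Laplacian and its adjoint.

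\emph{Reduction to a linear equation.} Fix $\omega$ and consider $\omega'=u^{1/(n-1)}\omega$ with $u>0$ smooth, so that $(\omega')^{n-1}=u\,\omega^{n-1}$. The condition $\partial\bar\partial(\omega')^{n-1}=0$ reads $\partial\bar\partial(u\,\omega^{n-1})=0$, which is a linear second-order equation $L u=0$, where
$$ L u\,\frac{\omega^n}{n!} := \sqrt{-1}\,\partial\bar\partial\big(u\,\omega^{n-1}\big)\cdot c_n $$
for a suitable normalizing constant $c_n$. Expanding, the principal part of $L$ is $\mathrm{tr}_\omega\sqrt{-1}\partial\bar\partial u$, so $L$ is elliptic with principal symbol that of $-\tfrac12\Delta$. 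The key observation is that $L$ is, up to sign and normalization, the formal $L^2$-adjoint of the Chern Laplacian. Indeed, by Stokes' theorem,
$$ \int_X u\,\Delta^{\mathrm{Ch}}f\,\frac{\omega^n}{n!} = -\int_X u\,dd^cf\wedge\frac{\omega^{n-1}}{(n-1)!} = -\int_X f\,dd^c\!\left(u\,\frac{\omega^{n-1}}{(n-1)!}\right) , $$
so $(\Delta^{\mathrm{Ch}})^*u\,\frac{\omega^n}{n!}=-dd^c\big(u\,\omega^{n-1}/(n-1)!\big)$. We therefore need a positive element of $\ker(\Delta^{\mathrm{Ch}})^*$, unique up to scaling.

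\emph{Fredholm argument.} By the earlier discussion, $\Delta^{\mathrm{Ch}}$ differs from $\Delta$ by a first-order term, so it is Fredholm of index $0$. Its kernel consists of the constants: this follows from the strong maximum principle, since $\Delta^{\mathrm{Ch}}$ has no zero-order term, and a function with $\Delta^{\mathrm{Ch}}f=0$ on a compact connected manifold attains an interior maximum, hence is constant. Index zero then gives $\dim\ker(\Delta^{\mathrm{Ch}})^*=1$. Let $u$ span it. It remains to show that $u$ can be chosen positive.

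\emph{Positivity, the main obstacle.} Since the cokernel is spanned by $u$, the image of $\Delta^{\mathrm{Ch}}$ is $u^{\perp}$. Suppose $u$ changed sign. Then we could find a smooth $\phi\geq0$, not identically zero, with $\int_X\phi\,u=0$, by choosing bumps supported where $u>0$ and where $u<0$ with appropriate weights. This gives $\phi=\Delta^{\mathrm{Ch}}f$ for some $f$. Then $\Delta^{\mathrm{Ch}}f\ge0$, and the strong maximum principle, applied at a minimum of $f$ with our sign convention, forces $f$ to be constant, hence $\phi=0$, a contradiction. Therefore $u\geq0$ up to sign, and $u\not\equiv0$. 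To obtain strict positivity, note that $u$ solves the elliptic equation $(\Delta^{\mathrm{Ch}})^*u=0$, which has the form $\Delta u+(\text{first-order})+(\text{zero-order})\,u=0$. A nonnegative solution of such an equation vanishing at some point vanishes identically, by the Hopf strong maximum principle (which applies to nonnegative solutions regardless of the sign of the zero-order term) or by Harnack's inequality. Hence $u>0$, and $\omega'=u^{1/(n-1)}\omega$ is Gauduchon.

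\emph{Uniqueness.} If $e^{f}\omega$ is also Gauduchon, then $e^{(n-1)f}u^{-1}\cdot u$ lies in the one-dimensional kernel. More precisely, $e^{(n-1)f}$ lies in $\ker(\Delta^{\mathrm{Ch}})^*$, so it equals $c\,u$ for some constant $c$, and the two metrics differ by a homothety. (Throughout we assume $n\geq2$; for $n=1$ every metric is Kähler and the statement is trivial.)
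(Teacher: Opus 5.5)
Your proof is correct and follows the paper's route. You reduce to a positive generator of $\ker(\Delta^{\mathrm{Ch}})^*$ (your $u$ is the paper's $\Phi^2$), get $\dim\ker(\Delta^{\mathrm{Ch}})^*=1$ from $\ker\Delta^{\mathrm{Ch}}=\mathbb{R}$ and index zero, rule out a sign change by the maximum principle, and conclude with the strong maximum principle. Your sign step is exactly the paper's weighted-mean argument with the weight $\varphi\omega$ unwound: since $\Delta^{\mathrm{Ch}}_{\varphi\omega}=\varphi^{-1}\Delta^{\mathrm{Ch}}_{\omega}$, the paper's claim becomes your statement that a nonnegative $\phi\not\equiv0$ with $\phi\perp u$ would equal $\Delta^{\mathrm{Ch}}f$. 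One small correction: for $n=1$ only existence is trivial, while uniqueness up to homothety actually fails because every metric is then Gauduchon, so the statement should be read for $n\geq2$.
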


\begin{proof}
Computing the formal adjoint of the Chern Laplacian,
$$ (\Delta^{\mathrm{Ch}})^*f = \Delta f - g(df,\theta) + (d^*\theta)\,f , $$
gives in particular $d^*\theta = (\Delta^{\mathrm{Ch}})^*(1)$. Fix a Hermitian metric $\omega$; we look for $\Phi>0$ such that $\omega':=\Phi^{\frac{2}{n-1}}\omega$ is Gauduchon. A direct computation of how $\Delta^{\mathrm{Ch}}$ and $(\Delta^{\mathrm{Ch}})^*$ transform under this conformal change shows that $\omega'$ is Gauduchon if and only if $(\Delta^{\mathrm{Ch}})^*(\Phi^2)=0$, so the problem reduces to finding a positive generator of $\ker(\Delta^{\mathrm{Ch}})^*$.

Since $\Delta^{\mathrm{Ch}}$ is elliptic of second order with no zero-order term, the maximum principle gives $\ker\Delta^{\mathrm{Ch}}=\mathbb{R}$. Moreover, $\Delta^{\mathrm{Ch}}$ and $\Delta$ share the same principal symbol, hence differ by a compact operator, so $\mathrm{ind}(\Delta^{\mathrm{Ch}})=\mathrm{ind}(\Delta)=0$; by the Fredholm alternative, $\mathrm{coker}(\Delta^{\mathrm{Ch}})\simeq\ker(\Delta^{\mathrm{Ch}})^*$, so $0 = \mathrm{ind}(\Delta^{\mathrm{Ch}}) = \dim\ker\Delta^{\mathrm{Ch}} - \dim\ker(\Delta^{\mathrm{Ch}})^*$, whence $\dim\ker(\Delta^{\mathrm{Ch}})^*=1$ as well. It remains to show that a generator $f$ of this one-dimensional kernel has a sign.

Suppose $f\not\equiv0$ changes sign. Choosing a suitable positive weight $\varphi$, by a straighforward argument one can arrange $\int_X\varphi f\,\frac{\omega^n}{n!}=0$, i.e., $\varphi^{1-n}f\in\ker(\Delta^{\mathrm{Ch}}_{\varphi\omega})^*$ has zero mean with respect to $\varphi\omega$. But a generator of a one-dimensional $\ker(\Delta^{\mathrm{Ch}})^*$ can never have zero mean: otherwise the constant function $1$ would be $L^2$-orthogonal to $\ker(\Delta^{\mathrm{Ch}})^*$, hence, by the Fredholm decomposition $\mathcal{C}^\infty(X,\mathbb{R})=\ker(\Delta^{\mathrm{Ch}})^*\oplus\mathrm{im}\,\Delta^{\mathrm{Ch}}$, equal to $\Delta^{\mathrm{Ch}}h$ for some $h$; by the maximum principle $h$ would be constant, forcing $\Delta^{\mathrm{Ch}}h=0\neq1$, a contradiction. Hence $f$ has constant sign, say $f\geq0$, and the strong maximum principle for $(\Delta^{\mathrm{Ch}})^*$ upgrades this to $f>0$.

Setting $\tilde\omega:=f^{\frac{1}{n-1}}\omega$ gives the desired Gauduchon representative, unique up to a positive constant since $\ker(\Delta^{\mathrm{Ch}})^*$ is one-dimensional.
\end{proof}
\section{Hermitian metrics with constant Chern-scalar curvature}
\label{sec:chern-yamabe}

The Yamabe problem is a foundational problem in Riemannian geometry, initiated by Yamabe \cite{yamabe} and solved by the combined efforts of Trudinger \cite{trudinger}, Aubin \cite{aubin-1976}, and Schoen \cite{schoen}, who used the positive mass theorem from general relativity \cite{schoen-yau}. It establishes the existence of a metric with constant Riemannian scalar curvature in every conformal class on a compact Riemannian manifold. The problem reduces to solving a partial differential equation, via variational methods. See \cite{lee-parker} for an account on the topic.

\begin{theorem}[{\cite{yamabe,trudinger,aubin-1976,schoen}}]
Any Riemannian metric $g$ on a compact manifold can be conformally rescaled to a metric of constant scalar curvature.
\end{theorem}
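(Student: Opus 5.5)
The plan follows the classical variational strategy of Yamabe, Trudinger, Aubin and Schoen. Let $m:=\dim_{\mathbb R}X$. In dimension $m=2$ the statement is the Uniformization Theorem, equivalently the solvability of a Liouville-type equation $\Delta u + K = c\,e^{2u}$. There one picks the constant $c$ by Gauss--Bonnet and solves by the method of sub- and supersolutions (when $\chi<0$), by linear theory (when $\chi=0$), or by a Moser--Trudinger argument (on the sphere). So the main case is $m\geq 3$.

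For $m\geq3$ we write the conformal metric as $\tilde g=u^{4/(m-2)}g$ with $u>0$. The standard conformal law for Riemannian scalar curvature turns the condition $\mathrm{scal}(\tilde g)=\lambda$ into the Yamabe equation
$$ a_m\Delta u+\mathrm{scal}(g)\,u=\lambda\,u^{p-1},\qquad a_m=\tfrac{4(m-1)}{m-2},\quad p=\tfrac{2m}{m-2}, $$
with our positive Laplacian. This equation is the Euler--Lagrange equation of the Yamabe quotient
$$ Q(u)=\frac{\int_X\left(a_m|\nabla u|^2+\mathrm{scal}(g)u^2\right)}{\|u\|_{L^p}^2}. $$
So the plan is to minimize $Q$ over $W^{1,2}\setminus\{0\}$ and call the infimum $\lambda(g)$, the Yamabe invariant of the conformal class.

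The difficulty is that $p$ is the critical Sobolev exponent, so the embedding $W^{1,2}\hookrightarrow L^p$ is not compact. Following Trudinger, we first solve the subcritical problems with $s<p$. By Rellich compactness, minimizers $u_s\geq0$ exist, since one may replace $u$ by $|u|$. Elliptic regularity, a Moser iteration, and the strong maximum principle then give $u_s$ smooth and positive. The key step is Aubin's criterion: if $\lambda(g)<\lambda(S^m,g_{\mathrm{round}})$, then the $u_s$ are bounded in $L^r$ for some $r>p$. This holds because concentration would cost at least the sharp Sobolev constant, which equals the sphere value. Granting that bound, a bootstrap gives uniform $C^{2,\alpha}$ bounds, and the $u_s$ converge as $s\to p$ to a smooth positive minimizer, which solves the equation.

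The main obstacle is proving the strict inequality $\lambda(g)<\lambda(S^m)$ whenever $(X,g)$ is not conformal to the round sphere; in the conformally-spherical case the sphere itself supplies the solution. We construct test functions concentrated at a point $x_0$. When $m\geq6$ and the Weyl tensor is nonzero at some point, Aubin's local bubble expansion in conformal normal coordinates gives a negative correction proportional to $|W(x_0)|^2$. In the remaining cases ($m\leq5$, or $X$ locally conformally flat) we follow Schoen. We glue the bubble to the Green's function $G$ of the conformal Laplacian, which is defined when $\lambda>0$; the cases $\lambda\leq0$ are easy since then $\lambda<\lambda(S^m)$ trivially. Then $G^{4/(m-2)}g$ is an asymptotically flat scalar-flat manifold, and the correction term is $-c\cdot(\text{its ADM mass})$. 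The positive mass theorem of Schoen and Yau makes this mass positive unless the manifold is Euclidean, which forces $(X,g)$ to be conformal to the round sphere. This last step is where we rely entirely on the deep external input.
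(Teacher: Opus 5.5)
Your outline is the standard Yamabe--Trudinger--Aubin--Schoen argument in the form presented by Lee--Parker, and that is all the paper relies on: it gives no proof of its own beyond citing these works and \cite{lee-parker}, and its Theorem~\ref{thm:deformed-yamabe-existence} adapts exactly your subcritical-approximation and Aubin test-function steps to the deformed equation. One small correction concerns $m=2$: on $S^2$ the Liouville functional sits exactly at the Moser--Trudinger threshold and loses compactness just as the round sphere does in higher dimensions, so there you should simply invoke the Uniformization Theorem, as you first say, rather than a direct Moser--Trudinger minimization.
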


In this section, we discuss an analogue of the Yamabe problem in the Hermitian setting, as proposed in \cite{angella-calamai-spotti} and further studied in \cite{calamai-zou,lejmi-upmeier,koca-lejmi,lejmi-maalaoui,ho,ho-shin,barbaro,fusi,yu,yu2,angella-pediconi-MathZ,shen}.

\subsection{Chern Yamabe problem}

In this section, let $X^n$ denote a compact complex manifold of complex dimension $n$. Fix a Hermitian metric $\omega$ and consider its conformal class
$$ \{\omega\} := \left\{ e^{\frac{f}{n}}\omega : f\in\mathcal{C}^\infty(X,\mathbb{R}) \right\} , $$
which consists of Hermitian metrics. The factor ``$1/n$'' is chosen only for notational convenience in the formulae below.

We are interested in the existence of Hermitian metrics in $\{\omega\}$ with constant Chern scalar curvature. More precisely, we introduce the Chern-Yamabe moduli space
$$
\mathcal{C}h\mathcal{Y}a(X,\{\omega\}) := 
\frac{\left\{ \omega' \in \{\omega\} \;\middle|\; \mathrm{scal}^{\mathrm{Ch}}(\omega') \text{ is constant} \right\}}{ {\rm Aut}(X,\{\omega\}) \times \mathbb{R}^{>0} },
$$
consisting of conformal Hermitian metrics with constant Chern scalar curvature, up to homotheties and biholomorphisms of $X$ preserving the conformal class. Here ${\rm Aut}(X,\{\omega\})$ is isomorphic to the compact Lie group of holomorphic isometries of the unit volume Gauduchon representative of $\{\omega\}$; see \cite[Lemma 1.3]{angella-calamai-spotti}.
We are interested in questions such as whether this space is empty, consists of a single metric, or is compact.

\begin{remark}
For compact complex curves, the Chern connection coincides with the Levi-Civita connection and $\theta=T^{\mathrm{Ch}}=0$, so the Chern-Yamabe problem reduces to the classical uniformization problem: therefore $\mathcal{C}h\mathcal{Y}a(X,\{\omega\})$ consists of a single point, corresponding to the unique constant curvature metric (whose sign is that of $-\chi(X)$, hence depends on the genus).
It is instructive to observe where the quotient by ${\rm Aut}(X,\{\omega\})$ is really needed. For genus $g\geq1$, uniqueness already holds before quotienting: the constant curvature representative is unique in $\{\omega\}$ up to homothety, since the automorphisms preserving the conformal class act by isometries of it. For $g=0$, uniqueness genuinely fails before quotienting: the group $\mathrm{Aut}(\mathbb{CP}^1,\{\omega_{\rm FS}\})\simeq\mathrm{PGL}(2,\mathbb{C})$ is non-compact and does not act by isometries of $\omega_{\rm FS}$, so pulling back the round metric along M\"obius transformations produces a non-compact family of pairwise distinct constant curvature metrics in $\{\omega_{\rm FS}\}$. This is the Hermitian counterpart of the well-known non-compactness of the solution set of the Yamabe problem on the round sphere.
This is also the mechanism underlying the borderline case $\lambda^t_X(\{\omega\})=t\Lambda_n$ in Theorem~\ref{thm:deformed-yamabe-existence}: on $\mathbb{CP}^1$, the test bubbles $u_\varepsilon$ are, up to rescaling, exactly the local trace near the concentration point $p$ of the pullback of $\omega_{\rm FS}$ under the dilations $z\mapsto z/\varepsilon$, a non-compact one-parameter subgroup of $\mathrm{PGL}(2,\mathbb{C})$; as $\varepsilon\to0$, this subgroup escapes to infinity, which is precisely why the bubbles saturate the sharp constant $t\Lambda_n$ in the limit.
\end{remark}

\begin{remark}
In the non-K\"ahler setting, the problems of finding metrics with constant Chern scalar curvature and with constant Riemannian scalar curvature are genuinely different: a solution to one does not provide a solution to the other, as one sees by comparing the two curvatures in \eqref{eq:scal-scalch}.
Note that a Hermitian metric may well have both curvatures constant, but with different values: this is the case, for example, for the Hopf manifold with the standard metric of Remark~\ref{rmk:hopf}, where $\mathrm{scal}^{\mathrm{Ch}}(\omega)=2n(n-1)$ while $\mathrm{scal}(g_\omega)=(n-1)(2n-1)$. Indeed, if both are constant, then integrating \eqref{eq:scal-scalch} gives
$$ \big(\mathrm{scal}(g_\omega)-\mathrm{scal}^{\mathrm{Ch}}(\omega)\big)\,\mathrm{Vol}(X,g_\omega) = -\frac{1}{4}\int_X|T^{\mathrm{Ch}}|^2\,\frac{\omega^n}{n!} \leq 0 , $$
so that $\mathrm{scal}(g_\omega)\leq\mathrm{scal}^{\mathrm{Ch}}(\omega)$, with equality if and only if $\omega$ is K\"ahler. This last statement was already observed in \cite[Corollary 4.5]{liu-yang}: the Chern scalar curvature and the Riemannian scalar curvature cannot coincide pointwise, or even just on average, unless the metric is K\"ahler. The same phenomenon occurs for another analogue of the Yamabe problem in the almost Hermitian setting, introduced in \cite{delrio-simanca}.
\end{remark}

\subsection{Existence results in the non-positive curvature regime}

The aim of this section is to give a positive answer to the Chern-Yamabe problem on the existence of Hermitian metrics with constant Chern scalar curvature in a conformal class and their uniqueness up to homotheties, under the assumption that the expected curvature is non-positive. The positive curvature case, discussed briefly in the next section, remains open. See also \cite{fusi} for the problem of prescribing the Chern scalar curvature by a (not necessarily constant) function.

First, we rewrite the Chern-Yamabe problem as a semilinear partial differential equation, similar to the Liouville equation. This is based on the conformal transformation formula \eqref{eq:scal-conf} for the Chern scalar curvature, which we rewrite here for convenience as
\begin{eqnarray*}
\mathrm{scal}^{\mathrm{Ch}}(e^{\frac{f}{n}}\omega)
&=& e^{-\frac{f}{n}} \left( \mathrm{scal}^{\mathrm{Ch}}(\omega) + \Delta f + g(d f, \theta) \right) \\
&=& e^{-\frac{f}{n}} \left( \mathrm{scal}^{\mathrm{Ch}}(\omega) + \Delta^{\mathrm{Ch}} f \right) .
\end{eqnarray*}
The Chern-Yamabe problem thus reduces to finding $f\in\mathcal{C}^\infty(X,\mathbb{R})$ and a constant $\lambda\in\mathbb{R}$ such that
\begin{equation}\tag{ChYa}\label{eq:ChYa}
\Delta^{\mathrm{Ch}} f + \mathrm{scal}^{\mathrm{Ch}}(\omega) = \lambda\, e^{\frac{f}{n}} ,
\end{equation}
which corresponds to $\mathrm{scal}^{\mathrm{Ch}}(e^{\frac{f}{n}}\omega) = \lambda$.

We give a geometric interpretation of the expected constant $\lambda$. Fix the Gauduchon representative $\eta$ of unit volume in $\{\omega\}$ as a reference metric. Up to homothety, normalize $f$ by imposing
$$
\int_X e^{\frac{f}{n}} \frac{\eta^n}{n!} = 1 .
$$
Note that by Jensen's inequality, any metric in this normalized slice has volume at least $1$. Integrating \eqref{eq:ChYa} with respect to the volume form of $\eta$ and using that $\eta$ is Gauduchon (so the Green identity holds for the Chern Laplacian), we find that $\lambda$ is a cohomological invariant of $(X,\{\omega\})$:
$$
\lambda = \int_X \mathrm{scal}^{\mathrm{Ch}}(\eta)\, \frac{\eta^n}{n!} = \frac{2}{(n-1)!} \int_X c_1^{BC}(K_X^{-1}) \wedge [\eta^{n-1}] ,
$$
where $c_1^{BC}(K_X^{-1}) \in H^{1,1}_{BC}(X, \mathbb{R})$ is represented by ${\rm Ric}^{\rm Ch}(\omega)$ and maps to the first Chern class of $X$ in $H^2(X, \mathbb{R})$, and $[\eta^{n-1}] \in H^{n-1,n-1}_A(X, \mathbb{R})$ is well defined since $\eta$ is Gauduchon. This quantity is the Gauduchon degree $\Gamma_X(\{\omega\})$ of $\{\omega\}$, equal to the volume of the divisor of any meromorphic section of $K_X^{-1}$ measured with respect to $\eta$; see \cite{gauduchon-cras-2,gauduchon-mathann}.
When $\mathrm{Kod}(X)\geq 0$, the existence of holomorphic sections of the pluricanonical bundles forces $\Gamma_X(\{\omega\})\leq 0$ for every conformal class $\{\omega\}$. More precisely, $\Gamma_X(\{\omega\})<0$ unless $K_X$ is holomorphically torsion, i.e., $K_X^{\otimes\ell}\cong\mathcal{O}_X$ for some integer $\ell$, in which case $\Gamma_X(\{\omega\})=0$.

We are now ready to state the main result of this section.

\begin{theorem}[{\cite{angella-calamai-spotti}}]\label{thm:ch-ya-negativo}
Let $X$ be a compact complex manifold endowed with a conformal class $\{\omega\}$ of Hermitian metrics. If $\Gamma_X(\{\omega\})\leq 0$, then the Chern-Yamabe moduli space $\mathcal{C}h\mathcal{Y}a(X,\{\omega\})$ consists of a single point. In particular, this holds for any conformal class on a compact complex manifold with non-negative Kodaira dimension.
\end{theorem}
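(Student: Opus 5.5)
We work with the Gauduchon representative $\eta$ of unit volume as reference metric, so that \eqref{eq:ChYa} reads $\Delta^{\mathrm{Ch}}_\eta f + \mathrm{scal}^{\mathrm{Ch}}(\eta) = \lambda e^{f/n}$ with $\lambda=\Gamma_X(\{\omega\})$ forced by integration against $\eta^n/n!$ (the Green identity for $\Delta^{\mathrm{Ch}}_\eta$). The proof then splits into the cases $\lambda=0$ and $\lambda<0$.

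\emph{Case $\Gamma_X=0$.} The equation becomes the linear equation $\Delta^{\mathrm{Ch}}_\eta f = -\mathrm{scal}^{\mathrm{Ch}}(\eta)$. By the Fredholm decomposition recalled in the proof of Gauduchon's theorem, $\mathrm{im}\,\Delta^{\mathrm{Ch}}$ is the $L^2$-orthogonal of $\ker(\Delta^{\mathrm{Ch}}_\eta)^*$. Since $\eta$ is Gauduchon, $(\Delta^{\mathrm{Ch}}_\eta)^*(1)=d^*\theta=0$, so this kernel is spanned by the constants. The right-hand side has zero mean exactly because $\Gamma_X=0$, hence a solution exists. Since $\ker\Delta^{\mathrm{Ch}}=\mathbb R$, it is unique up to additive constants, that is, up to homothety.

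\emph{Case $\Gamma_X<0$.} I would use the continuity method on
\[
\Delta^{\mathrm{Ch}}_\eta f_t + t\,\mathrm{scal}^{\mathrm{Ch}}(\eta) + (1-t)c = \lambda e^{f_t/n},
\]
with $c$ a constant chosen so that $t=0$ admits the solution $f_0=n\log(c/\lambda)$. For openness, the linearization is $u\mapsto\Delta^{\mathrm{Ch}}u-\frac{\lambda}{n}e^{f_t/n}u$, which has zero-order coefficient $-\lambda e^{f/n}/n>0$. By the maximum principle (evaluate at the max and min of $u$) it is injective. Being a compact perturbation of $\Delta^{\mathrm{Ch}}$, it has index $0$, hence it is an isomorphism, and the implicit function theorem in H\"older spaces gives openness.

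Closedness rests on a priori estimates. A $C^0$ bound follows from the maximum principle: at a maximum point of $f$ we have $\Delta^{\mathrm{Ch}}f\ge0$ (the first-order term vanishes), so $\lambda e^{\max f/n}\ge \min(\text{rhs scalar})$. As $\lambda<0$, this bounds $\max f$ from above, and the minimum symmetrically bounds $\min f$ from below. Given the $C^0$ bound, $\Delta^{\mathrm{Ch}}f$ is bounded. $L^p$ elliptic estimates then give $W^{2,p}$ and hence $C^{1,\alpha}$ bounds, and Schauder bootstrapping gives $C^{k,\alpha}$ bounds; Ascoli–Arzel\`a yields closedness. The main point to check carefully is that these estimates are uniform in $t$; this holds since the coefficients depend linearly on $t\in[0,1]$.

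\emph{Uniqueness} (for $\lambda<0$). If $f_1,f_2$ both solve the equation with the same $\lambda$, then at a maximum of $f_1-f_2$ we get $0\le\Delta^{\mathrm{Ch}}(f_1-f_2)=\lambda(e^{f_1/n}-e^{f_2/n})$. Since $\lambda<0$, this forces $f_1\le f_2$ there, so $f_1\le f_2$ everywhere; by symmetry $f_1=f_2$. Solutions with a different normalization correspond to homotheties, so the moduli space is a single point.

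The final claim follows from the discussion preceding the statement: $\mathrm{Kod}(X)\ge0$ implies $\Gamma_X(\{\omega\})\le0$. I expect no serious obstacle, since the sign $\lambda\le0$ makes every maximum-principle argument go through; the only delicate point is the index-zero/Fredholm argument for the non-self-adjoint linearized operator.
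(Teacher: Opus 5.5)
\paragraph{Gap: the lower bound in the closedness step.} The $C^0$ bound in your closedness step is not symmetric, and the lower bound on $\min f_t$ fails as written. At a minimum point $x_{\min}$ you get $\lambda e^{\min f_t/n}\le t\,\mathrm{scal}^{\mathrm{Ch}}(\eta)(x_{\min})+(1-t)c$. Dividing by $\lambda<0$ gives $e^{\min f_t/n}\ge \left(t\,\mathrm{scal}^{\mathrm{Ch}}(\eta)(x_{\min})+(1-t)c\right)/\lambda$. This is a positive lower bound only if the numerator is negative.

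So the sign of $\lambda$ alone gives you the upper bound on $\max f_t$. The lower bound needs the zero-order datum $t\,\mathrm{scal}^{\mathrm{Ch}}(\eta)+(1-t)c$ to be negative everywhere. But $\Gamma_X(\{\omega\})<0$ only says that $\mathrm{scal}^{\mathrm{Ch}}(\eta)$ has negative $\eta$-average. For $t$ close to $1$ the datum is positive wherever $\mathrm{scal}^{\mathrm{Ch}}(\eta)>0$, and then your argument does not prevent $\min f_t\to-\infty$.

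\paragraph{How the paper handles it.} The missing idea is the first step of the paper's proof: change the reference metric before running the continuity method. Since $\Gamma_X(\{\omega\})-\mathrm{scal}^{\mathrm{Ch}}(\eta)$ has zero $\eta$-mean, your own $\Gamma_X=0$ argument solves $\Delta^{\mathrm{Ch}}_\eta v=\Gamma_X(\{\omega\})-\mathrm{scal}^{\mathrm{Ch}}(\eta)$. The metric $e^{v/n}\eta$ then has $\mathrm{scal}^{\mathrm{Ch}}=\Gamma_X(\{\omega\})\,e^{-v/n}<0$ pointwise. Run your path (with $c<0$) from this reference, using its own Chern Laplacian. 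The datum $t\,\mathrm{scal}^{\mathrm{Ch}}+(1-t)c$ is then bounded above by the negative constant $\max\left(\max_X\mathrm{scal}^{\mathrm{Ch}},c\right)$ uniformly in $t$, and the minimum-point argument gives a uniform lower bound.

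\paragraph{An alternative repair.} You can instead keep $\eta$ and deduce the lower bound from the upper bound. Once $\max f_t$ is bounded, the right-hand side of the equation is uniformly bounded. Elliptic $L^p$ estimates for $\Delta^{\mathrm{Ch}}_\eta$, whose kernel is the constants, then bound the oscillation of $f_t$. Integrating the equation against $\eta^n/n!$ with the Gauduchon condition gives $\lambda\int_X e^{f_t/n}\frac{\eta^n}{n!}=t\,\Gamma_X(\{\omega\})+(1-t)c$, which bounds $\max f_t$ from below, and hence $\min f_t$ as well.

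\paragraph{What is fine.} The rest of your proposal is sound and matches the paper: the $\Gamma_X=0$ case, openness via injectivity plus index zero, the bootstrap, uniqueness, and the Kodaira-dimension claim.
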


\begin{proof}
Let $\eta$ be the unique unit-volume Gauduchon representative in the conformal class
$\{\omega\}$. We split the proof according to the sign of $\Gamma_X(\{\omega\})$.

\smallskip
\noindent{\itshape The case $\Gamma_X(\{\omega\})=0$.}
The Chern-Yamabe equation \eqref{eq:ChYa} reduces to the linear equation
$$\Delta^{\mathrm{Ch}} f = -\mathrm{scal}^{\mathrm{Ch}}(\eta).$$
By the Fredholm alternative, this is solvable if and only if $-\mathrm{scal}^{\mathrm{Ch}}(\eta)$ is orthogonal to $\ker(\Delta^{\mathrm{Ch}})^*$. A direct computation gives
$$(\Delta^{\mathrm{Ch}})^* f = \Delta f - g(df,\theta),$$
where the term $d^*\theta$ vanishes since $\eta$ is Gauduchon.
If $f\in\ker(\Delta^{\mathrm{Ch}})^*$, then
$$0 = \int_X f\,(\Delta^{\mathrm{Ch}})^*f\,\frac{\eta^n}{n!} = \int_X\left(|df|^2 - \frac{1}{2}g(d(f^2),\theta)\right)\frac{\eta^n}{n!} = \int_X|df|^2\,\frac{\eta^n}{n!},$$
where again we use that $\eta$ is Gauduchon. Hence $\ker(\Delta^{\mathrm{Ch}})^*$ consists only of constant functions. Since $\Gamma_X(\{\omega\})=0$, we have $\int_X\mathrm{scal}^{\mathrm{Ch}}(\eta)\,\frac{\eta^n}{n!}=0$, so $-\mathrm{scal}^{\mathrm{Ch}}(\eta)$ is indeed orthogonal to the constants, and the equation is solvable. The solution is smooth by classical arguments, and unique up to an additive constant, since $\ker\Delta^{\mathrm{Ch}}$ also consists of constants by the maximum principle.

\smallskip
\noindent{\itshape The case $\Gamma_X(\{\omega\})<0$.}
We apply the continuity method. First, by solving the linear equation
$$\Delta^{\mathrm{Ch}} f = -\mathrm{scal}^{\mathrm{Ch}}(\eta) + \Gamma_X(\{\omega\})$$
as above, we find a conformal metric $\omega:=e^{\frac{f}{n}}\eta$ whose Chern scalar curvature satisfies
$$\mathrm{scal}^{\mathrm{Ch}}(\omega) = e^{-\frac{f}{n}}\Gamma_X(\{\omega\}) < 0$$
everywhere on $X$. We fix this metric as our reference representative in $\{\omega\}$.

Set $\lambda:=\Gamma_X(\{\omega\})<0$ and consider the continuity path
$$\mathrm{ChYa}(t,f) := \Delta^{\mathrm{Ch}}f + t\,\mathrm{scal}^{\mathrm{Ch}}(\omega) - \lambda\,e^{\frac{f}{n}} + \lambda(1-t) $$
for $(t,f)\in[0,1]\times\mathcal{C}^{2,\alpha}(X,\mathbb{R})$. Define
$$T := \left\{ t\in[0,1] : \exists\, f_t\in\mathcal{C}^{2,\alpha}(X,\mathbb{R}) \text{ such that } \mathrm{ChYa}(t,f_t)=0 \right\}.$$

\smallskip
\noindent{\itshape $T$ is non-empty.}
Clearly $0\in T$, since $\mathrm{ChYa}(0,0)=0$.

\smallskip
\noindent{\itshape $T$ is open.}
The linearization of $\mathrm{ChYa}$ at $(t_0,f_{t_0})$ in the $f$-direction is
$$Dv = \Delta^{\mathrm{Ch}}v - \frac{\lambda}{n}\,e^{\frac{f_{t_0}}{n}}\cdot v.$$
We show $\ker D=\{0\}$. If $Dv=0$, then at a maximum point $x_{\max}$ of $v$,
$$-\frac{\lambda}{n}\,e^{\frac{f_{t_0}(x_{\max})}{n}}\cdot v(x_{\max}) = -\Delta^{\mathrm{Ch}}v(x_{\max}) \leq 0,$$
and since $\lambda<0$ and $e^{\frac{f_{t_0}}{n}}>0$, we get $\max v \leq 0$. Similarly, at a minimum point $x_{\min}$, one obtains $\min v\geq 0$. Hence $v\equiv 0$. Since $D$ differs from $\Delta^{\mathrm{Ch}}$ by a compact operator, they have the same Fredholm index, so injectivity of $D$ implies surjectivity. By the implicit function theorem, $T$ is open.

\smallskip
\noindent{\itshape $T$ is closed.}
Let $(t_k)_k\subseteq T$ converge to $t_\infty$, with $\mathrm{ChYa}(t_k,f_{t_k})=0$. We claim there exists $C>0$, depending only on $X$, $\omega$, and $\lambda$, such that $\|f_{t_k}\|_{L^\infty}\leq C$ for all $k$. At a maximum point $x_{\max}$ of $f_{t_k}$, we have $\Delta^{\mathrm{Ch}}f_{t_k}(x_{\max})\geq 0$, so
$$-\lambda\,e^{\frac{f_{t_k}(x_{\max})}{n}} \leq -t_k\,\mathrm{scal}^{\mathrm{Ch}}(\omega)(x_{\max}) - \lambda(1-t_k) \leq \max_X \big|\mathrm{scal}^{\mathrm{Ch}}(\omega)\big| - \lambda,$$
giving a uniform upper bound for $f_{t_k}$. Similarly, evaluating at a minimum point $x_{\min}$ gives a uniform lower bound as claimed.
This uniform $L^\infty$ estimate implies that the right-hand side $\lambda\,e^{\frac{f_{t_k}}{n}}$ is uniformly bounded in $L^\infty$. Since $f_{t_k}$ satisfies the elliptic equation
$$\Delta^{\mathrm{Ch}}f_{t_k} = \lambda\,e^{\frac{f_{t_k}}{n}} - t_k\,\mathrm{scal}^{\mathrm{Ch}}(\omega) - \lambda(1-t_k),$$
the Calderón-Zygmund inequality gives a uniform bound in $W^{2,p}$ for every $p<+\infty$. Choosing $p>2n$, the Sobolev embedding $W^{2,p}\hookrightarrow\mathcal{C}^{1,\alpha}$ then gives a uniform bound in $\mathcal{C}^{1,\alpha}$. Substituting back into the equation and applying the Schauder estimates, we obtain a uniform bound in $\mathcal{C}^{2,\alpha}$. Iterating this bootstrap argument yields a uniform bound in $\mathcal{C}^{3}(X,\mathbb{R})$. By the Ascoli-Arzelà theorem, a subsequence converges in $\mathcal{C}^{2,\alpha}(X)$ to some $f_\infty\in\mathcal{C}^{2,\alpha}(X,\mathbb{R})$ satisfying $\mathrm{ChYa}(t_\infty,f_\infty)=0$, so $t_\infty\in T$.

\smallskip
Since $T$ is non-empty, open, and closed in $[0,1]$, we conclude $T=[0,1]$. In particular, $\mathrm{ChYa}(1,f)=0$ has a solution $f\in\mathcal{C}^{2,\alpha}(X,\mathbb{R})$, which is smooth by a standard Schauder bootstrap argument.

\smallskip
\noindent{\itshape Uniqueness.}
Suppose $\omega_i=e^{\frac{f_i}{n}}\omega$ for $i=1,2$ both have constant Chern scalar curvature $\lambda_i<0$. Then $f_1-f_2$ satisfies
$$\Delta^{\mathrm{Ch}}(f_1-f_2) = \lambda_1\,e^{\frac{f_1}{n}} - \lambda_2\,e^{\frac{f_2}{n}}.$$
Evaluating at a maximum and minimum point of $f_1-f_2$ and arguing as above, one deduces that $f_1-f_2$ is constant, equal to $n\,\log\frac{\lambda_2}{\lambda_1}$. Imposing the normalization condition then forces $f_1=f_2$.
\end{proof}

\subsection{Positive curvature regime}

The statement of Theorem~\ref{thm:ch-ya-negativo} only covers the case of expected non-positive Chern scalar curvature. The same argument breaks down in the positive case, due to the failure of the maximum principle. Moreover, the problem is not variational in general (see Remark~\ref{rmk:not-variational}), so the classical techniques from the Yamabe problem do not apply. We will address this issue in Section~\ref{sec:twisted-yamabe}, and collect here some remarks concerning the positive case.

\begin{remark}
The Hopf surface provides a first example of a compact complex manifold admitting a Hermitian metric with positive constant Chern scalar curvature. It is defined as the quotient $X=(\mathbb{C}^2\setminus\{0\})/\mathbb{Z}$, where $\mathbb{Z}$ acts by $z\mapsto 2z$. As a smooth manifold, $X$ is diffeomorphic to $S^1\times S^3$, and as a complex manifold it has $\mathrm{Kod}(X)=-\infty$. As we will see in Remark~\ref{rmk:hopf}, the standard metric $\omega:=|z|^{-2}\omega_0$, where $\omega_0$ is the flat Hermitian metric on $\mathbb{C}^2$, has constant positive Chern scalar curvature.
\end{remark}

\begin{remark}\label{rmk:not-variational}
A natural approach in the positive regime would be to interpret the Chern-Yamabe equation as the Euler-Lagrange equation of a suitable functional and apply  variational methods. However, this fails in general: for a given representative $\omega$ in the conformal class, the Chern-Yamabe equation is variational with respect to the $L^2$-product induced by $\omega$ if and only if $\omega$ itself is balanced.
Consequently, the equation admits a variational formulation (for a suitable choice of
representative) if and only if the conformal class $\{\omega\}$ contains a balanced metric.
As announced in Remark~\ref{rmk:conformal-formulas-not-variational}, the obstruction is the term $g(df,\theta)$ in \eqref{eq:ChYa}, which appears when expanding the Chern Laplacian as in \eqref{eq:chern-laplacian}. Consider the $1$-form on $\mathcal{C}^\infty(X,\mathbb{R})$ given by
$$\alpha_f(h) := \int_X h\cdot g(df,\theta)\,\frac{\omega^n}{n!}.$$
We claim that $\alpha_f$ is closed if and only if it vanishes identically, which occurs precisely when $\omega$ is balanced. Indeed, $d\alpha_f=0$ if and only if
$$\int_X h\,g(d\psi,\theta)\,\frac{\omega^n}{n!} = \int_X \psi\,g(dh,\theta)\,\frac{\omega^n}{n!}$$
for all $h,\psi\in\mathcal{C}^\infty(X,\mathbb{R})$. Taking $h$ constant and $\psi$ arbitrary, integration by parts forces $d^*\theta=0$, i.e., $\omega$ is Gauduchon. In this case, $\int_X g(d(\psi h),\theta)\,\frac{\omega^n}{n!}=0$, implying $\int_X \psi\,g(dh,\theta)\,\frac{\omega^n}{n!}=0$ for all $\psi,h$, hence $\theta=0$. When $\omega$ is balanced, the Chern-Yamabe equation is the Euler-Lagrange equation of
$$\mathcal{F}(f) := \frac{1}{2}\int_X |df|^2_\omega\,\frac{\omega^n}{n!} + \int_X \mathrm{scal}^{\mathrm{Ch}}(\omega)\,f\,\frac{\omega^n}{n!},$$
subject to $\int_X e^{\frac{f}{n}}\,\frac{\omega^n}{n!}=1$. However, it is unclear whether $\mathcal{F}$ is bounded below when $\Gamma_X(\{\omega\})>0$.
\end{remark}

\begin{remark}
Even when solutions exist in the positive regime, uniqueness generally fails. Following the strategy of \cite{delima-piccione-zedda} for the classical Yamabe problem, one can apply the Krasnosel'skii bifurcation argument to show non-uniqueness on $X=\mathbb CP^1\times\mathbb CP^1\times\mathbb CP^1$; see \cite[Theorem 5.4]{angella-calamai-spotti}.

More precisely, normalize $\omega_{\rm FS}$ on $\mathbb CP^1$ so that $\mathrm{Ric}(g_{\rm FS})=2g_{\rm FS}$: the Gauss curvature of $g_{\rm FS}$ equals $2$, whence $\mathrm{scal}(g_{\rm FS})=4$, the eigenvalues of $\Delta_{\omega_{\rm FS}}$ are $2j(j+1)$ with multiplicity $2j+1$, and, by the Gauss-Bonnet theorem, $\mathbb CP^1$ has area $2\pi$. Consider the one-parameter family of K\"ahler metrics on $X$
$$\omega_\lambda := V(\lambda)^{-1/3}\left(\pi_1^*\omega_{\rm FS}+\pi_2^*\omega_{\rm FS}+\lambda\,\pi_3^*\omega_{\rm FS}\right), \quad \lambda\in(0,+\infty),$$
where $\pi_i\colon X\to\mathbb CP^1$ are the natural projections on the corresponding factors, and $V(\lambda):=(2\pi)^3\lambda$ is chosen so that $\mathrm{Vol}(X,\omega_\lambda)=1$. Each $\omega_\lambda$ is a cscK metric, with
$$ \mathrm{scal}^{\mathrm{Ch}}(\omega_\lambda) = \mathrm{scal}(g_{\omega_\lambda}) = 4\,V(\lambda)^{1/3}\left(2+\lambda^{-1}\right) = 8\pi\left(2\lambda^{1/3}+\lambda^{-2/3}\right) , $$
hence a trivial solution of the Chern-Yamabe equation in its conformal class. Consistently with the rest of these notes, we parametrize the conformal class as $e^{\frac{f}{n}}\omega$, where here $n=3$. We apply the Krasnosel'skii bifurcation theorem to the map
$$F(f,\lambda) := \mathrm{scal}^{\mathrm{Ch}}\!\left(e^{\frac{f}{3}}\omega_\lambda\right) - \int_X \mathrm{scal}^{\mathrm{Ch}}\!\left(e^{\frac{f}{3}}\omega_\lambda\right)\frac{\omega_\lambda^3}{3!},$$
defined on suitable Banach spaces of H\"older functions with zero mean. (In \cite{angella-calamai-spotti} the conformal class is parametrized as $e^{\frac{2f}{n}}\omega$ instead; this explains the discrepancies between the constants below and those appearing there.)

The Krasnosel'skii bifurcation theorem, see e.g.\ \cite[Theorem II.3.2]{kielhofer}, asserts the following: if $F(0,\lambda)=0$ for all $\lambda$ and the linearization $A(\lambda):=D_fF|_{(0,\lambda)}$ is Fredholm of index zero, then $(0,\lambda_0)$ is a bifurcation point, meaning that non-trivial solutions of $F(f,\lambda)=0$ accumulate at $(0,\lambda_0)$, provided that $\dim\ker A(\lambda_0)$ is odd and the transversality condition $\partial_\lambda A(\lambda_0)[v]\notin\mathrm{im}\,A(\lambda_0)$ holds for every $v\in\ker A(\lambda_0)\setminus\{0\}$.

The key point is a spectral computation. By \eqref{eq:ChYa}, linearizing at $f=0$ gives
$$ A(\lambda) = \Delta_{\omega_\lambda} - \frac{1}{3}\,\mathrm{scal}^{\mathrm{Ch}}(\omega_\lambda) , $$
where we used that $\omega_\lambda$ is K\"ahler, whence $\Delta^{\mathrm{Ch}}_{\omega_\lambda}=\Delta_{\omega_\lambda}$. Since $\omega_\lambda$ restricts to $V(\lambda)^{-1/3}\omega_{\rm FS}$ on the first two factors and to $V(\lambda)^{-1/3}\lambda\,\omega_{\rm FS}$ on the third,
$$ \Delta_{\omega_\lambda} = V(\lambda)^{1/3}\left(\Delta^{(1)}+\Delta^{(2)}+\lambda^{-1}\Delta^{(3)}\right) , $$
where $\Delta^{(\ell)}:=\pi_\ell^*\Delta_{\rm FS}$ denotes the Laplacian of $g_{\rm FS}$ on the $\ell$-th factor. Writing $V^{(j_1,j_2,j_3)}$ for the space spanned by the products $\beta_1\beta_2\beta_3$ with $\Delta^{(\ell)}\beta_\ell=2j_\ell(j_\ell+1)\beta_\ell$, these are exactly the joint eigenspaces of $A(\lambda)$, since $A(\lambda)$ is a linear combination of the commuting operators $\Delta^{(1)},\Delta^{(2)},\Delta^{(3)}$; hence the condition $V^{(j_1,j_2,j_3)}\subseteq\ker A(\lambda)$ reads, after dividing by the common factor $2V(\lambda)^{1/3}$,
$$ j_1(j_1+1)+j_2(j_2+1)+\lambda^{-1}j_3(j_3+1) = \frac{4}{3}+\frac{2}{3}\,\lambda^{-1} ; $$
note that this is in fact independent of the normalization of $\omega_{\rm FS}$. At $\lambda_0=\frac{1}{4}$ the right-hand side equals $4$, and, since the possible values of $j(j+1)$ are $0,2,6,12,\dots$, the only solution is $(j_1,j_2,j_3)=(1,1,0)$, whence
$$ \ker A(\lambda_0) = V^{(1,1,0)} , $$
which has dimension
$$ \dim\ker A(\lambda_0) = 3\cdot 3\cdot 1 = 9 , $$
which is odd. Note that these eigenfunctions have zero mean, consistently with the choice of Banach spaces.
As for transversality, $A(\lambda)$ acts on each $V^{(j_1,j_2,j_3)}$ as multiplication by the scalar
$$ a(\lambda) = 4\pi\lambda^{1/3}\left(j_1(j_1+1)+j_2(j_2+1)-\frac{4}{3}\right) + 4\pi\lambda^{-2/3}\left(j_3(j_3+1)-\frac{2}{3}\right) . $$
On $\ker A(\lambda_0)=V^{(1,1,0)}$ this reads $a(\lambda)=\frac{32\pi}{3}\lambda^{1/3}-\frac{8\pi}{3}\lambda^{-2/3}$, whence
$$ a'(\lambda) = \frac{32\pi}{9}\,\lambda^{-2/3} + \frac{16\pi}{9}\,\lambda^{-5/3} , \qquad a'(\lambda_0) = \frac{32\pi}{3}\,\lambda_0^{-2/3} \neq 0 . $$
Hence $\partial_\lambda A(\lambda_0)[v]=a'(\lambda_0)\,v$ is a non-zero multiple of $v$ for every $v\in\ker A(\lambda_0)\setminus\{0\}$; since $A(\lambda_0)$ is self-adjoint, $\mathrm{im}\,A(\lambda_0)=\left(\ker A(\lambda_0)\right)^{\perp}$, and therefore $\partial_\lambda A(\lambda_0)[v]\notin\mathrm{im}\,A(\lambda_0)$.

By the bifurcation theorem, for $\lambda$ close to $\lambda_0$ the equation $F(f,\lambda)=0$ admits a non-trivial solution $f_\lambda\not\equiv0$, and the metric $e^{\frac{f_\lambda}{3}}\omega_\lambda$ has constant Chern scalar curvature but is not equivalent to $\omega_\lambda$ under the action of $\mathrm{Aut}(X,\{\omega_\lambda\})\times\mathbb{R}^{>0}$.

Despite the failure of uniqueness, compactness of $\mathcal{C}h\mathcal{Y}a(X,\{\omega\})$ in the positive regime remains an open problem.
\end{remark}

\begin{remark}
The parabolic counterpart of the Chern-Yamabe equation is the Chern-Yamabe flow
$$\frac{\partial f}{\partial t} = -\Delta^{\mathrm{Ch}}_\omega f - \mathrm{scal}^{\mathrm{Ch}}(\omega) + \lambda\,e^{\frac{f}{n}},$$
where $\lambda=\Gamma_X(\{\omega\})$ is the expected constant Chern scalar curvature. Since the principal symbol of $\Delta^{\mathrm{Ch}}$ coincides with that of the Hodge-de Rham Laplacian, the flow is well-posed for any smooth initial datum. Short-time existence, long-time behaviour, and convergence in the negative Gauduchon degree case have been further studied in \cite{lejmi-maalaoui,calamai-zou,yu2}.
\end{remark}

\begin{remark}\label{rmk:second-chern-einstein}
The Chern-Yamabe problem has a natural application to the existence of second Chern-Einstein metrics, i.e., Hermitian metrics $\omega$ satisfying
$$\mathrm{Ric}^{(2)}(\omega) = \lambda\,\omega$$
for some smooth function $\lambda$; see \cite{gauduchon-cras-2,angella-calamai-spotti-2}. Recall that the second Chern-Ricci form $\mathrm{Ric}^{(2)}(\omega)$ is the $(1,1)$-form obtained by tracing the Chern curvature tensor in the first pair of indices,
$$\mathrm{Ric}^{(2)}(\omega) \stackrel{\mathrm{loc}}{=} h^{\bar{j}i}\,\Theta_{i\bar{j}k\bar{\ell}}\,\sqrt{-1}\,dz^\ell\wedge d\bar{z}^k ,$$
in contrast with the first Chern-Ricci form $\mathrm{Ric}^{\rm Ch}(\omega)$ recalled in Section~\ref{sec:preliminaries}.

Under a conformal change $\omega'=e^f\omega$, the second Chern-Ricci curvature transforms as
$$ \mathrm{Ric}^{(2)}(e^f\omega) = \mathrm{Ric}^{(2)}(\omega) + \frac{1}{2}(\Delta^{\mathrm{Ch}} f)\,\omega , $$
so the second Chern-Einstein condition is conformally invariant. Suppose $\omega$ is a \emph{weak} second Chern-Einstein metric, meaning that $\lambda$ is not necessarily constant. Solving the linear equation
$$\Delta^{\mathrm{Ch}}f = -\mathrm{scal}^{\mathrm{Ch}}(\eta) + \Gamma_X(\{\omega\})$$
(where $\eta$ is the unit-volume Gauduchon representative) produces a conformal metric $e^{\frac{f}{n}}\eta$ that is weak second Chern-Einstein with Einstein factor of definite sign, equal to the sign of $\Gamma_X(\{\omega\})$.

Recall that ${\rm tr}_\omega \mathrm{Ric}^{(2)}(\omega)=\frac{1}{2}{\rm scal}^{\rm Ch}(\omega)$. Finding a \emph{strong} second Chern-Einstein metric (i.e., with $\lambda$ constant) in the same conformal class reduces to solving the Chern-Yamabe equation \eqref{eq:ChYa}, namely
$$\Delta^{\mathrm{Ch}}_\eta f + \mathrm{scal}^{\mathrm{Ch}}(\eta) = \lambda\,e^{\frac{f}{n}} ,$$
and the resulting metric $e^{\frac{f}{n}}\eta$ has constant second Chern-Einstein factor $\frac{\lambda}{2n}$.
Therefore, a positive solution to the Chern-Yamabe problem in every conformal class would imply that every weak second Chern-Einstein metric is conformal to a strong one.
\end{remark}

The positive curvature regime raises several open questions: whether $\mathcal{C}h\mathcal{Y}a(X,\{\omega\})$ is non-empty for every conformal class with $\Gamma_X(\{\omega\})>0$, whether it is compact, whether the Chern-Yamabe functional is bounded below in the balanced case, and whether the Chern-Yamabe flow converges. A deeper understanding of the Chern scalar curvature requires a more geometric perspective: in Section~\ref{sec:momentum}, we show that a deformation of $\mathrm{scal}^{\mathrm{Ch}}$ by torsion quantities arises as a momentum map in locally conformally K\"ahler geometry. This picture then suggests a natural deformation of the Chern-Yamabe problem that is both geometrically motivated and variational, which we study in Section~\ref{sec:twisted-yamabe}.
\section{Chern-scalar curvature in the momentum map picture}
\label{sec:momentum}

Constant scalar curvature metrics play a central role in K\"ahler geometry: by the foundational work of Fujiki \cite{fujiki} and Donaldson \cite{donaldson}, the scalar curvature arises as a momentum map for an infinite-dimensional Hamiltonian action, so that constant scalar curvature K\"ahler metrics appear as its zeroes. Inspired by the Hilbert-Mumford criterion in Geometric Invariant Theory in finite dimension, this perspective leads to the notion of $K$-stability.

More precisely, let $X$ be a compact smooth manifold endowed with a symplectic form $\omega$ admitting K\"ahler structures. The space $\mathcal{J}_{\mathrm{alm}}(\omega)$ of $\omega$-compatible almost-complex structures carries a natural infinite-dimensional K\"ahler structure $(\mathbb{J},\mathbb{\Omega})$, invariant under the symplectomorphism group $\mathrm{Aut}(X,\omega)$, which preserves the subset $\mathcal{J}(\omega)$ of integrable structures. See Section~\ref{sec:fujiki-donaldson} for details. The scalar curvature map $\mathrm{scal}\colon\mathcal{J}_{\mathrm{alm}}(\omega)\to\mathcal{C}^\infty(X,\mathbb{R})$, assigning to $J$ the (Riemannian) scalar curvature of $g_J:=\omega(\cdot,J\cdot)$, is $\mathrm{Aut}(X,\omega)$-equivariant and satisfies the momentum map identity
$$\int_X d\,\mathrm{scal}|_J(v)\,h_Y\,\frac{\omega^n}{n!} = -\frac{1}{2}\,\mathbb{\Omega}_J(Y^*_J,v)$$
for all $J\in\mathcal{J}(\omega)$, $v\in T_J\mathcal{J}_{\mathrm{alm}}(\omega)$, and $Y\in\mathfrak{ham}(X,\omega)$, where $\mathfrak{ham}(X,\omega)$ consists of vector fields $Y$ satisfying $Y\lrcorner\omega=dh_Y$ for some $h_Y\in\mathcal{C}^\infty(X,\mathbb{R})$, unique up to a constant, and $Y^*$ denotes the fundamental vector field of the action.

In this section, following \cite{angella-calamai-pediconi-spotti}, we describe how this picture extends to the locally conformally K\"ahler setting, and how the Chern scalar curvature naturally enters deformed by a tensor term. It is this extension that motivates the deformed scalar curvatures of Section~\ref{sec:twisted-yamabe}.

Throughout this section, $X$ denotes a compact, connected, smooth manifold of real dimension $2n$, endowed with a locally conformally symplectic form $\omega$, namely a non-degenerate $2$-form such that
$$ d\omega = \frac{1}{n-1}\,\theta\wedge\omega , \qquad d\theta = 0 , $$
for some closed $1$-form $\theta$. Thanks to the coefficient ``$1/(n-1)$'' chosen here, $\theta$ coincides exactly with the Lee form introduced before. We assume $\theta$ to be non-exact, a case sometimes referred to as strictly locally conformally symplectic.

\subsection{The tautological K\"ahler structure on the space of locally conformally K\"ahler structures}\label{sec:fujiki-donaldson}

We first construct a K\"ahler structure on the infinite-dimensional space of locally conformally K\"ahler structures.
We stress that the construction of such structure, as in \cite{fujiki, donaldson} for the K\"ahler setting, requires only that $\omega$ be non-degenerate, not that it be closed or locally conformally symplectic.
Let $\omega$ be a non-degenerate $2$-form on $X$ and set
$$ \mathcal{J}_{\mathrm{alm}}(\omega) := \left\{ \begin{array}{c} J \text{ almost-complex structure on } X : \\
\omega(J\cdot,J\cdot)=\omega \text{ and } g_J:=\omega(\cdot,J\cdot)>0 \end{array} \right\} . $$
Following \cite[Remark 4.3]{fujiki}, $\mathcal{J}_{\mathrm{alm}}(\omega)$ has a natural structure of infinite-dimensional ILH (inverse limit of Hilbert) manifold in the sense of \cite{omori}, in particular Fr\'echet, being the space of smooth sections of a bundle with fibre ${\rm Sp}(2n,\mathbb R) / {\rm U}(n)$.

The tangent space of $\mathcal{J}_{\mathrm{alm}}(\omega)$ at $J$ is
\begin{eqnarray*}
T_J\mathcal{J}_{\mathrm{alm}}(\omega)
&=& \left\{ A\in\mathcal{C}^\infty(X,\mathfrak{sp}(TX,\omega)) : JA+AJ=0 \right\} \\
&=& \left\{ \hat{a}_J:=[J,a] : a\in\mathcal{C}^\infty(X,\mathfrak{sp}(TX,\omega)) \right\} ,
\end{eqnarray*}
where
$$ \mathfrak{sp}(TX,\omega) := \left\{ a\in\mathrm{End}(TX) : \omega(a\cdot,\cdot)+\omega(\cdot,a\cdot)=0 \right\} $$
denotes the bundle of infinitesimally $\omega$-preserving endomorphisms.
Moreover, each such $A$ is $g_J$-symmetric.

Indeed, let $(J_t)_t$ be a curve in $\mathcal{J}_{\mathrm{alm}}(\omega)$ with $J_0=J$ and $A:=\dot{J}_0:=\frac{d}{dt}\big|_{t=0}J_t$. Differentiating $J_t^2=-\mathrm{id}$ gives $JA+AJ=0$, and differentiating $\omega(J_t\cdot,J_t\cdot)=\omega$ gives $\omega(A\cdot,J\cdot)+\omega(J\cdot,A\cdot)=0$; using $J$-compatibility, the latter is equivalent to $A\in\mathfrak{sp}(TX,\omega)$. This proves the first inclusion.
Given $A$ with $JA+AJ=0$ and $A\in\mathfrak{sp}(TX,\omega)$, set $a:=-\tfrac{1}{2}JA$. Then $[J,a]=\tfrac{1}{2}A+\tfrac{1}{2}JAJ=A$, since $JAJ=A$; and $a\in\mathfrak{sp}(TX,\omega)$.
Conversely, for any $a\in\mathcal{C}^\infty(X,\mathfrak{sp}(TX,\omega))$, the curve $J_t:=\exp(-ta)\,J\,\exp(ta)$ lies in $\mathcal{J}_{\mathrm{alm}}(\omega)$, as $\exp(ta)$ preserves $\omega$, with $J_0=J$ and $\dot{J}_0=[J,a]$. Finally, $g(AX,Y)=\omega(AX,JY)=-\omega(X,AJY)=\omega(X,JAY)=g(X,AY)$, so $A$ is $g$-symmetric.

Any $a\in\mathcal{C}^\infty(X,\mathfrak{sp}(TX,\omega))$ thus determines a vector field $\hat{a}$ on $\mathcal{J}_{\mathrm{alm}}(\omega)$, with $\hat{a}_J:=[J,a]$; we call such vector fields ``basic''. Their flow is $\Theta^{\hat{a}}_t(J)=\exp(-ta)\,J\,\exp(ta)$.
Since $\hat a_J=[J,a]$ depends linearly on $J$, its differential at $J$ in the
direction $v$ is simply $d_J\hat a(v)=[v,a]$; the bracket of two basic vector fields
is then $[\hat a,\hat b]_J=d_J\hat b(\hat a_J)-d_J\hat a(\hat b_J)=[[J,a],b]-[[J,b],a]$, which by the Jacobi identity equals $[J,[a,b]]=\widehat{[a,b]}_J$.This gives the bracket identity
\begin{equation}\label{eq:basic-bracket}
[\hat{a},\hat{b}] = \widehat{[a,b]} = \widehat{ab-ba} .
\end{equation}

We now equip $\mathcal{J}_{\mathrm{alm}}(\omega)$ with the tautological triple $(\mathbb{J},\mathbb{G},\mathbb{\Omega})$, defined at $J$ by
\begin{eqnarray*}
\mathbb{J}_J(A) &:=& JA , \\
\mathbb{G}_J(A,B) &:=& \int_X\mathrm{tr}(AB) \,\frac{\omega^n}{n!} , \\
\mathbb{\Omega}_J(A,B) &:=& \mathbb{G}_J(\mathbb{J}_JA,B)=\int_X\mathrm{tr}(JAB)\,\frac{\omega^n}{n!} .
\end{eqnarray*}
One checks directly that $\mathbb{J}_J$ maps $T_J\mathcal{J}_{\mathrm{alm}}(\omega)$ to itself: if $JA+AJ=0$, then $J(JA)+(JA)J=-A+JAJ=-A+(-AJ)J=0$; and if moreover $A\in\mathfrak{sp}(TX,\omega)$, then $\omega(JAX,Y)+\omega(X,JAY) = \omega(X,AJY)-\omega(JX,AY) = -\omega(X,JAY)-\omega(JX,AY) = \omega(JX,AY)-\omega(JX,AY) = 0$, so $JA\in\mathfrak{sp}(TX,\omega)$ as well; hence $\mathbb{J}_J^2=-\mathrm{id}$ follows from $J^2=-\mathrm{id}$. Antisymmetry of $\mathbb\Omega_J$, i.e.\ $\mathbb\Omega_J(A,B)=-\mathbb\Omega_J(B,A)$, follows from $\mathrm{tr}(JAB)=\mathrm{tr}((JAB)^t)=-\mathrm{tr}(BAJ)=-\mathrm{tr}(JBA)$, using $A^t=A$, $B^t=B$, $J^t=-J$.

\begin{proposition}[{\cite[Theorem 4.2]{fujiki}}]\label{prop:tautological-kahler}
The triple $(\mathbb{J},\mathbb{G},\mathbb{\Omega})$ is a K\"ahler structure on $\mathcal{J}_{\mathrm{alm}}(\omega)$, with respect to which the basic vector fields are holomorphic and Killing.
\end{proposition}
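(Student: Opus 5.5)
The plan is to verify three things in turn: that $\mathbb{G}$ is a Riemannian metric compatible with $\mathbb{J}$; that $\mathbb{J}$ is integrable and $\mathbb{\Omega}$ is closed; and that the basic vector fields $\hat a$ preserve both $\mathbb{J}$ and $\mathbb{G}$. A cleaner route to the first two is to exploit the fibrewise structure. Since $\mathcal{J}_{\mathrm{alm}}(\omega)$ is the space of sections of a bundle with fibre $\mathrm{Sp}(2n,\mathbb{R})/\mathrm{U}(n)$, and all three tensors are defined pointwise and then integrated against the fixed volume form $\frac{\omega^n}{n!}$, it suffices to show that the fibre carries an $\mathrm{Sp}(2n,\mathbb{R})$-invariant Kähler structure given by the same formulas. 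The ILH manifold structure is taken from \cite{fujiki}.

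\textbf{Metric compatibility.} For $A\in T_J\mathcal{J}_{\mathrm{alm}}(\omega)$ we know that $A$ is $g_J$-symmetric, so $\mathrm{tr}(A^2)=|A|^2_{g_J}\geq 0$, with equality only if $A=0$. Hence $\mathbb{G}_J$ is positive definite. Compatibility $\mathbb{G}_J(\mathbb{J}A,\mathbb{J}B)=\mathrm{tr}(JAJB)=\mathrm{tr}(AB)$ follows from $JA=-AJ$ and $J^2=-\mathrm{id}$. It follows that $\mathbb{\Omega}$ is a nondegenerate $2$-form, whose antisymmetry was already checked.

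\textbf{Integrability and closedness.} Pointwise, the fibre $\mathrm{Sp}(2n,\mathbb{R})/\mathrm{U}(n)$ with this structure is the Siegel upper half space, a Hermitian symmetric space. Concretely, the Cayley-type map $J\mapsto (J-J_0)(J+J_0)^{-1}$ identifies the fibre with the open set $\{Z \text{ symmetric complex } n\times n : Z\bar Z<1\}$, and $\mathbb{J}$ corresponds to multiplication by $\sqrt{-1}$. For integrability I would instead compute the Nijenhuis tensor of $\mathbb{J}$ directly on basic vector fields, which span every tangent space. This uses \eqref{eq:basic-bracket} together with $d_J\hat a(v)=[v,a]$: write $\mathbb{J}\hat a$ at $J$ as $\widehat{a'}_J$ with $a'=-\tfrac12 J[J,a]$, and check that $N_{\mathbb{J}}(\hat a,\hat b)=0$ by expanding with the Jacobi identity. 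For $d\mathbb{\Omega}=0$, use the Cartan formula on basic fields: $\mathbb{\Omega}(\hat a,\hat b)=\int\mathrm{tr}(J[J,a][J,b])$, and the bracket terms reduce via \eqref{eq:basic-bracket} to $\widehat{[a,b]}$. Alternatively, observe $d\mathbb{\Omega}=0$ pointwise on the symmetric space, since an invariant $3$-form on a symmetric space vanishes (the symmetry acts by $-1$).

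\textbf{Basic fields are holomorphic and Killing.} The flow $\Theta^{\hat a}_t(J)=\exp(-ta)J\exp(ta)$ is conjugation by $\phi_t=\exp(ta)\in \mathrm{Sp}$, applied pointwise. Its differential sends $A$ to $\phi_t^{-1}A\phi_t$. This commutes with $\mathbb{J}$, because $\phi_t^{-1}JA\phi_t=(\phi_t^{-1}J\phi_t)(\phi_t^{-1}A\phi_t)$. It also preserves $\mathrm{tr}(AB)$ by conjugation invariance of the trace. The volume form is untouched because the flow acts on $J$, not on $X$. Hence $\hat a$ preserves $\mathbb{J}$ and $\mathbb{G}$.

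The main obstacle is the integrability and closedness computation, in particular handling the non-constant term $d_J(\mathbb{J}\hat a)$. I expect the cleanest path is to reduce pointwise to the classical Siegel domain, where both properties are standard, and to rely on \cite{fujiki} for the regularity needed in the infinite-dimensional setting.
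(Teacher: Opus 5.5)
Your proof is correct, but its main line differs from the paper's.

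\textbf{The paper's route.} The paper stays on $\mathcal{J}_{\mathrm{alm}}(\omega)$ throughout. Its key lemma is the pointwise identity $\mathrm{tr}(J[J,a][J,b])=2\,\mathrm{tr}(J[a,b])$, which gives $\mathbb{\Omega}_J(\hat a_J,\hat b_J)=2\int_X\mathrm{tr}(J[a,b])\frac{\omega^n}{n!}$. In the Cartan formula for $d\mathbb{\Omega}(\hat a,\hat b,\hat c)$, both the derivative terms and the bracket terms (via $[\hat a,\hat b]=\widehat{[a,b]}$) then cancel by the Jacobi identity. For integrability, the paper shows the Nijenhuis tensor vanishes on basic fields and then invokes Koiso's result to pass from $N_{\mathbb{J}}=0$ to integrability in the Fr\'echet setting.

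\textbf{Your route.} You push everything down to the fibre. $d\mathbb{\Omega}$ is the integral of the fibrewise $d\Omega^F$, since $\frac{\omega^n}{n!}$ does not depend on $J$. And $d\Omega^F=0$ because an $\mathrm{Sp}$-invariant $3$-form on the symmetric space $\mathrm{Sp}(2n,\mathbb{R})/\mathrm{U}(n)$ vanishes: the symmetry at $J$ is conjugation by $J\in\mathrm{Sp}$, which acts by $-1$ on $T_J$.

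The Cayley map is the real gain of your approach, provided you base it at a global section $J_0\in\mathcal{J}_{\mathrm{alm}}(\omega)$ rather than at a point of a single fibre. Then:
\begin{itemize}
\item $J+J_0$ is invertible by positivity of $g_J$ and $g_{J_0}$;
\item $Z=(J-J_0)(J+J_0)^{-1}$ is $g_{J_0}$-symmetric and anticommutes with $J_0$, so $J\mapsto Z$ is a global chart onto an open subset of the complex Fr\'echet space $(T_{J_0}\mathcal{J}_{\mathrm{alm}}(\omega),\mathbb{J}_{J_0})$;
\item from $(J+J_0)^{-1}J=J_0(J+J_0)^{-1}$ one gets $dZ(\mathbb{J}_JA)=J_0\,dZ(A)$.
\end{itemize}
This gives honest holomorphic coordinates and makes the appeal to Koiso unnecessary.

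Your flow argument for ``holomorphic and Killing'' is also more complete than the paper's. The paper only records $[\hat a,\mathbb{J}\hat b]=\mathbb{J}[\hat a,\hat b]$ and leaves the Killing property implicit. What the paper's route buys in return is self-containedness, and the explicit formula for $\mathbb{\Omega}$ on basic fields, which is the form in which $\mathbb{\Omega}$ enters the later momentum-map computation.

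\textbf{A slip in your side route.} You write $a'=-\tfrac12J[J,a]$, but this satisfies $[J,a']=\hat a_J$, not $\mathbb{J}\hat a_J$. The right choice is $a'=-\tfrac12J(\mathbb{J}_J\hat a_J)=\tfrac12[J,a]$. Since this $a'$ depends on $J$, $\mathbb{J}\hat a$ is not a basic field. What the Nijenhuis route actually needs is the direct computation $[\mathbb{J}\hat a,\mathbb{J}\hat b]=-\widehat{[a,b]}$, obtained from $(\mathbb{J}\hat a)_J=-a-JaJ$. Combined with $[\hat a,\mathbb{J}\hat b]=\mathbb{J}[\hat a,\hat b]$, this gives $N_{\mathbb{J}}(\hat a,\hat b)=[\mathbb{J}\hat a,\mathbb{J}\hat b]+[\hat a,\hat b]=0$.

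Similarly, your Cartan-formula alternative for $d\mathbb{\Omega}$ only closes once you have the trace identity above. Since your Siegel-domain argument already covers both points, you can drop these side remarks.
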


\begin{proof}
The key point of the proof is the pointwise trace identity
\begin{equation}\label{eq:trace-identity}
\mathrm{tr}\!\left(J[J,a][J,b]\right) = 2\,\mathrm{tr}(J[a,b]) ,
\end{equation}
proved as follows. Writing $X:=[J,a]$, $Y:=[J,b]$, one computes $JX=J^2a-JaJ=-a-JaJ$, whence $JXY=-aJb+abJ+Jab+JaJbJ$. Taking traces and using cyclicity, $\mathrm{tr}(abJ)=\mathrm{tr}(Jab)$ and $\mathrm{tr}(JaJbJ)=\mathrm{tr}(J^2aJb)=-\mathrm{tr}(aJb)=-\mathrm{tr}(Jba)$; summing these gives $\mathrm{tr}(JXY)=2\,\mathrm{tr}(Jab)-2\,\mathrm{tr}(Jba)=2\,\mathrm{tr}(J[a,b])$, as claimed.

In particular,
$$ \mathbb{\Omega}_J(\hat{a}_J,\hat{b}_J)=2\int_X\mathrm{tr}(J[a,b])\frac{\omega^n}{n!} . $$
We show that $\mathbb{\Omega}$ is closed. Recall from the computation of
$T_J\mathcal{J}_{\mathrm{alm}}(\omega)$ above that every $A\in T_J\mathcal{J}_{\mathrm{alm}}(\omega)$
is of the form $\hat a_J$ for $a:=-\tfrac12 JA$; hence the basic vector fields span
each tangent space, and since $d\mathbb\Omega$ is $C^\infty$-tensorial, it suffices to
evaluate it on basic vector fields, whose brackets are well-defined by
\eqref{eq:basic-bracket}:
\[
d\mathbb{\Omega}(\hat{a},\hat{b},\hat{c}) = \sum_{\mathrm{cyc}}\left( \hat{a}\,\mathbb{\Omega}(\hat{b},\hat{c}) - \mathbb{\Omega}([\hat{a},\hat{b}],\hat{c}) \right) .
\]
Differentiating $\mathbb{\Omega}_{J_t}(\hat{b},\hat{c})=2\int_X\mathrm{tr}(J_t[b,c])\frac{\omega^n}{n!}$ along the flow $J_t=\Theta^{\hat{a}}_t(J)$ of $\hat a$ gives
$$ \hat{a}\,\mathbb{\Omega}(\hat{b},\hat{c})\big|_J = 2\int_X\mathrm{tr}\!\left([J,a][b,c]\right)\frac{\omega^n}{n!} = 2\int_X\mathrm{tr}\!\left(J\,[a,[b,c]]\right)\frac{\omega^n}{n!} , $$
where the second equality is a cyclic rearrangement under the trace. Summing cyclically, the first group vanishes by the Jacobi identity. By \eqref{eq:basic-bracket}, we get $\mathbb{\Omega}([\hat{a},\hat{b}],\hat{c})|_J=2\int_X\mathrm{tr}(J[[a,b],c])\frac{\omega^n}{n!}$, and the second group vanishes by the Jacobi identity as well. Hence $d\mathbb{\Omega}=0$.

Integrability of $\mathbb{J}$ follows from an analogous computation, showing that the
Nijenhuis tensor vanishes on basic vector fields, via $[\hat{a},\mathbb{J}\hat{b}]=\mathbb{J}[\hat{a},\hat{b}]$.
Indeed, $(\mathbb{J}\hat{b})_J=J[J,b]=-b-JbJ$, so that $d_J(\mathbb{J}\hat b)(v)=-(vbJ+Jbv)$; substituting $v=\hat a_J=[J,a]$ and $d_J\hat a\big((\mathbb J\hat b)_J\big)=[(\mathbb J\hat b)_J,a]$ into $[\hat a,\mathbb{J}\hat b]_J=d_J(\mathbb J\hat b)(\hat a_J)-d_J\hat a\big((\mathbb J\hat b)_J\big)$
and simplifying gives $[\hat a,\mathbb J\hat b]_J=-[a,b]-J[a,b]J=\mathbb{J}_J\big(\widehat{[a,b]}_J\big)=\mathbb{J}[\hat a,\hat b]_J$.
Since basic vector fields span each tangent space, this shows that the Nijenhuis tensor of $\mathbb{J}$ vanishes identically on $\mathcal{J}_{\mathrm{alm}}(\omega)$; by \cite[Proposition~1.4]{koiso}, in this infinite-dimensional setting the vanishing of the Nijenhuis tensor is in fact equivalent to the integrability of $\mathbb{J}$, so this suffices to conclude.

Compatibility is immediate: for $A,B\in T_J\mathcal{J}_{\mathrm{alm}}(\omega)$ one has $\mathrm{tr}(JAJB)=\mathrm{tr}(AB)$, hence $\mathbb{G}_J(\mathbb{J}A,\mathbb{J}B)=\mathbb{G}_J(A,B)$; and $\mathbb{G}_J(A,A)=\int_X\mathrm{tr}(A^2)\frac{\omega^n}{n!}\geq0$ since $A$ is $g$-symmetric, with equality only for $A=0$.
\end{proof}

Finally, we introduce the subset
$$ \mathcal{J}(\omega) := \left\{ J \in \mathcal{J}_{\mathrm{alm}}(\omega) : J \text{ is integrable} \right\} , $$
which is an analytic subset of $\mathcal{J}_{\mathrm{alm}}(\omega)$.

\subsection{Twisted Hamiltonian vector fields}

We now bring in the locally conformally symplectic condition $d\omega=\theta\wedge\omega$, $d\theta=0$. Introducing the twisted differential
$$ d_\theta := d - \frac{1}{n-1}\theta\wedge , $$
the condition reads $d_\theta\omega=0$; since $\theta$ is closed, $d_\theta\circ d_\theta=0$, so $d_\theta$ defines the Morse-Novikov cohomology $H^\bullet_{d_\theta}(X,\mathbb{R})$. We assume $\theta$ to be non-exact, which we call the strictly locally conformally symplectic case.
Then $H^0_{d_\theta}(X,\mathbb{R})=\{0\}$: indeed, if $h$ satisfies $dh=\frac{1}{n-1}h\theta$ and vanishes at some point $p$, then along any path $\gamma$ from $p$ the function $h\circ\gamma$ solves a homogeneous linear ordinary differential equation with zero initial datum, forcing $h\circ\gamma\equiv0$; since $X$ is connected, $h\equiv0$. Otherwise $h$ is nowhere zero, hence of constant sign by continuity on connected $X$, and (replacing $h$ by $-h$ if necessary) $\log h$ is a well-defined global smooth function with $d\log h=\theta/(n-1)$, so that $\theta=(n-1)\,d\log h$ is exact, contradicting our assumption. Hence the only function $h$ with $dh=\frac{1}{n-1}\,h\theta$ is $h=0$.
On the minimal covering $\pi\colon\widehat{X}\to X$, the pullback $\pi^*\omega$ is globally conformal to a symplectic form, and the deck group acts by homotheties.
Any compatible complex structure $J \in \mathcal{J}(\omega)$ corresponds to a locally conformally K\"ahler structure $\left(J, g_J:=\omega(\cdot, J \cdot)\right)$.

The natural symmetries are the special conformal vector fields and the twisted-Hamiltonian vector fields,
\begin{eqnarray*}
\mathfrak{aut}^\star(X,\{\omega\}) &:=& \left\{ Y : d_\theta(Y\lrcorner\omega)=0 \right\} , \\
\mathfrak{ham}(X,\{\omega\}) &:=& \left\{ Y : Y\lrcorner\omega=d_\theta h \text{ for some } h\in\mathcal{C}^\infty(X,\mathbb{R}) \right\} .
\end{eqnarray*}
The reason for these definitions is explained by the Cartan formula:
\begin{equation}\label{eq:cartan-twisted}
\mathcal{L}_Y\omega = d(Y\lrcorner\omega) + Y\lrcorner(\theta\wedge\omega) = d_\theta(Y\lrcorner\omega) + \frac{1}{n-1}\,\theta(Y)\,\omega ,
\end{equation}
so that $Y\in\mathfrak{aut}^\star(X,\{\omega\})$ if and only if $\mathcal{L}_Y\omega=\frac{1}{n-1}\,\theta(Y)\,\omega$.
A computation with the twisted Cartan formula shows that both are closed Lie subalgebras of the algebra $\mathfrak{X}(X)$ of vector fields, with $[\mathfrak{aut}^\star,\mathfrak{aut}^\star]\subseteq\mathfrak{ham}\subseteq\mathfrak{aut}^\star$. The inclusion $\mathfrak{ham}\subseteq\mathfrak{aut}^\star$ is immediate from $d_\theta^2=0$. For the first inclusion, given $Y_1,Y_2\in\mathfrak{aut}^\star$, the identity $\iota_{[Y_1,Y_2]}=[\mathcal{L}_{Y_1},\iota_{Y_2}]$ gives
\begin{eqnarray*}
[Y_1,Y_2]\lrcorner\omega
&=& \mathcal{L}_{Y_1}(Y_2\lrcorner\omega) - \frac{1}{n-1}\theta(Y_1)\cdot(Y_2\lrcorner\omega) \\
&=& d \left( Y_1 \lrcorner (Y_2\lrcorner\omega) \right) + Y_1 \lrcorner d(Y_2\lrcorner\omega) - \frac{1}{n-1}\theta(Y_1)\cdot(Y_2\lrcorner\omega) \\
&=& d \left( \omega(Y_2,Y_1) \right) + \frac{1}{n-1} Y_1 \lrcorner \left( \theta \wedge (Y_2 \lrcorner \omega) \right) - \frac{1}{n-1}\theta(Y_1)\cdot(Y_2\lrcorner\omega) \\
&=& d \left( \omega(Y_2,Y_1) \right) + \frac{1}{n-1} \theta(Y_1) \cdot (Y_2 \lrcorner \omega) - \frac{1}{n-1} \omega(Y_2,Y_1) \cdot \theta \\
&& - \frac{1}{n-1}\theta(Y_1)\cdot(Y_2\lrcorner\omega) \\
&=& d \left( \omega(Y_2,Y_1) \right) - \frac{1}{n-1} \omega(Y_2,Y_1) \cdot \theta \\
&=& d_\theta\big(\omega(Y_2,Y_1)\big) ,
\end{eqnarray*}
which in particular shows $[Y_1,Y_2]\in\mathfrak{ham}(X,\{\omega\})$.

Crucially, since $H^0_{d_\theta}(X,\mathbb{R})=\{0\}$, the twisted-Hamiltonian potential is unique: there is a linear isomorphism
$$ \kappa\colon \mathfrak{ham}(X,\{\omega\})\xrightarrow{\ \sim\ }\mathcal{C}^\infty(X,\mathbb{R}) , \qquad Y\lrcorner\omega=d_\theta(\kappa(Y)) , $$
which satisfies $\kappa([Y_1,Y_2])=\omega(Y_2,Y_1)$.
This is in sharp contrast with the symplectic case, where the potential is defined only up to constants.

We denote by $\mathrm{Aut}^\star(X,\{\omega\})$ and $\mathrm{Ham}(X,\{\omega\})$ the generalized Lie transformation groups generated by these algebras, in the sense of \cite{omori}. The group $\mathrm{Aut}^\star(X,\{\omega\})$ acts on $\mathcal{J}_{\mathrm{alm}}(\omega)$ by pullback, $\phi\cdot J:=(d\phi)^{-1}\circ J\circ d\phi$; this preserves the integrable locus $\mathcal{J}(\omega)$, since the Nijenhuis tensor transforms equivariantly, $N_{\phi\cdot J}=\phi^*N_J$. The fundamental vector field of $Y\in\mathfrak{aut}^\star(X,\{\omega\})$ is given by $Y^* = -\mathcal{L}_Y J$; the sign is the standard normalization ensuring $Y\mapsto Y^*$ is a Lie algebra homomorphism.

\subsection{The momentum map}
\label{subsec:moment-map}

We would like to mimic the Fujiki-Donaldson construction for the action of $\mathrm{Ham}(X,\{\omega\})$ on $\mathcal{J}(\omega)$, using the Chern scalar curvature, which makes the Riemannian scalar curvature $J\mapsto\mathrm{scal}(g_J)$ appear by \eqref{eq:scal-scalch}. Two difficulties arise:
\begin{itemize}
\item the group $\mathrm{Aut}^\star(X,\{\omega\})$ preserves only the conformal class $\{\omega\}$, since $\mathcal{L}_Y\omega=\frac{1}{n-1}\theta(Y)\,\omega$ for $Y\in\mathfrak{aut}^\star(X,\{\omega\})$; hence the map $J\mapsto\mathrm{scal}(g_J)$ is not equivariant;
\item as we will see below, the linearization of $\mathrm{scal}$ produces a term involving $\nabla(\theta^\sharp)$, where $\nabla$ denotes the Weyl connection, which cannot apparently be absorbed by corrections depending on $\theta$.
\end{itemize}

Both difficulties are overcome by imposing an extra symmetry. Consider the symplectic dual of the multiple of the Lee form, namely the vector field $V$ defined by $V\lrcorner\omega=\frac{1}{n-1}\theta$. Since $\frac{1}{n-1}\theta=d_\theta(-1)$, the field $V$ is twisted-Hamiltonian with potential $\kappa(V)=-1$, and $\mathcal{L}_V\omega=0$. Note also that $\theta^\sharp=(n-1)JV$. We assume that there exists a compact, connected Lie group $K$ acting effectively on $X$ such that:
\begin{itemize}
\item the action preserves $\omega$;
\item the action is of Lee type, in the sense that $V\in\mathfrak{z}(\mathfrak{k})$ belongs to the center of the Lie algebra $\mathfrak{k}$ of $K$;
\item the action is twisted-Hamiltonian, i.e., $\mathfrak{k}\subseteq\mathfrak{ham}(X,\{\omega\})$.
\end{itemize}
The model case is that of Vaisman metrics (namely, locally conformally K\"ahler metrics whose Lee form is parallel, $D\theta=0$), where the closure of the flow of $V$ is a torus of Lee type. We restrict all objects to their $K$-invariant counterparts, denoted with a superscript $K$.

The symmetry solves exactly the two issues we had before. First, equivariance is restored: for $Y\in\mathfrak{aut}^\star(X,\{\omega\})^{K}$, the twisted Leibniz rule gives
$$ d_\theta\big(\omega(V,Y)\big) = V\lrcorner\,d_\theta(Y\lrcorner\omega) - Y\lrcorner\,d_\theta(V\lrcorner\omega) - [V,Y]\lrcorner\,\omega = 0 , $$
since $d_\theta(Y\lrcorner\omega)=0$, $d_\theta(V\lrcorner\omega)=d_\theta d_\theta(-1)=0$, and $[V,Y]=0$ (as $V$ is central and $Y$ is $K$-invariant). Since $H^0_{d_\theta}(X,\mathbb{R})=\{0\}$, this forces $\theta(Y)=0$, hence $\mathcal{L}_Y\omega=0$: the invariant group genuinely preserves $\omega$. Second, for $J\in\mathcal{J}(\omega)^{K}$ we have $\mathcal{L}_VJ=0$, which is equivalent to $[J,\nabla V]=0$; since $\nabla J=0$ and $\theta^\sharp=(n-1)JV$, the endomorphism $\nabla(\theta^\sharp)=(n-1)J\circ\nabla V$ is $J$-invariant, hence pointwise orthogonal to the $J$-anti-invariant tangent directions of $\mathcal{J}_{\mathrm{alm}}(\omega)^{K}$: the problematic term will drop out of all pairings.

We can now state the main result. Recall that $\mathrm{scal}^{\mathrm{Ch}}$ denotes the Chern scalar curvature of the Hermitian metric $g_J=\omega(\cdot,J\cdot)$.

\begin{theorem}[{\cite{angella-calamai-pediconi-spotti}}]\label{thm:moment-map}
Let $(X,\omega)$ be a compact, connected, locally conformally symplectic manifold of real dimension $2n$ with non-exact Lee form $\theta$, and let $K$ be a compact, connected Lie group acting effectively on $X$ and satisfying the conditions above. Then the map
$$ \mu\colon\mathcal{J}(\omega)^{K}\to\mathcal{C}^\infty(X,\mathbb{R})^{K} , \qquad \mu := \mathrm{scal}^{\mathrm{Ch}} + \frac{n}{n-1}\,d^*\theta , $$
is a momentum map for the action of $\mathrm{Ham}(X,\{\omega\})^{K}$ on $\mathcal{J}(\omega)^{K}$: more precisely, it admits a smooth extension to an $\mathrm{Aut}^\star(X,\{\omega\})^K$-equivariant  map on the whole space $\mathcal J_{\mathrm{alm}}(\omega)^K$ such that, for every $J\in\mathcal{J}(\omega)^{K}$, every $\hat{a}_J\in T_J\mathcal{J}_{\mathrm{alm}}(\omega)^{K}$, and every $Y\in\mathfrak{ham}(X,\{\omega\})^{K}$ with potential $h=\kappa(Y)$,
\begin{equation}\label{eq:moment-map-identity}
\int_X d\mu|_J(\hat{a}_J)\,h\,\frac{\omega^n}{n!} = -\frac{1}{2}\,\mathbb{\Omega}_J(Y^*_J,\hat{a}_J) .
\end{equation}
\end{theorem}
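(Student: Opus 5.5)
The plan is to reduce the statement to the Fujiki--Donaldson momentum map identity for the Riemannian scalar curvature. The reduction uses Proposition~\ref{prop:scalch-scalg} and the $K$-symmetry. The extension of $\mu$ to $\mathcal{J}_{\mathrm{alm}}(\omega)^K$ is defined by the same formula $\mu(J)=\mathrm{scal}^{\mathrm{Ch}}(g_J)+\frac{n}{n-1}d^*_{g_J}\theta$, where $\mathrm{scal}^{\mathrm{Ch}}$ is the Chern scalar curvature of the canonical Hermitian connection of the almost-Hermitian structure. Equivalently, by \eqref{eq:scal-scalch},
$$\mu(J)=\mathrm{scal}(g_J)+\frac{1}{n-1}\,d^*_{g_J}\theta+\frac14|T_J|^2 .$$
On the locally conformally K\"ahler locus we have $d\omega=\frac1{n-1}\theta\wedge\omega$, so $|T|^2$ is a fixed multiple of $|\theta|^2_{g_J}$, and $|\theta|^2_{g_J}=\omega(V,JV)(n-1)^2$ up to sign conventions. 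Since the volume form $\omega^n/n!$ does not depend on $J$, the codifferential $d^*_{g_J}\theta=-\mathrm{div}(\theta^{\sharp_J})$ depends on $J$ only through $\theta^{\sharp_J}=(n-1)JV$.

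Equivariance is the first step. The group $\mathrm{Aut}^\star(X,\{\omega\})^K$ acts by diffeomorphisms which, as shown above, genuinely preserve $\omega$ (since $\theta(Y)=0$), and hence also $\theta$. Every term in $\mu$ is natural under diffeomorphisms preserving $(\omega,\theta)$, so $\mu(\phi\cdot J)=\phi^*\mu(J)$.

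The second step is to linearize each term along $\hat a_J=A$, with $A$ a $J$-anti-invariant, $g_J$-symmetric endomorphism.
\begin{itemize}
\item For $\mathrm{scal}(g_J)$ I will use the standard first variation of the Riemannian scalar curvature under $\dot g=g(A\cdot,\cdot)$, which is traceless since $\mathrm{tr}A=0$: it equals $\delta\delta \dot g-\langle \mathrm{Ric},\dot g\rangle$.
\item For $d^*\theta$: $\frac{d}{dt}(J_tV)=AV$, so $\frac{d}{dt}\, d^*_{g_J}\theta=-(n-1)\mathrm{div}(AV)$.
\item For $|T|^2$, the variation is $2g(A\theta^\sharp,\theta^\sharp)$ up to a constant, and it vanishes pointwise because $A$ anticommutes with $J$ and $\theta^\sharp=(n-1)JV$; more precisely it pairs $A$ with $\theta^\sharp\otimes\theta^\sharp$, whose $J$-anti-invariant part I will check cancels against other terms.
\end{itemize}

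The third step pairs the result with $h=\kappa(Y)$ and integrates by parts. The right-hand side is computed using $Y^*=-\mathcal{L}_YJ$, together with the trace identity \eqref{eq:trace-identity}. I expect to obtain
$$-\tfrac12\mathbb\Omega_J(Y^*,A)=\int_X\langle D^*D^*(h\text{-Hessian-type term}),A\rangle,$$
by writing $\mathcal{L}_YJ$ in terms of $D(Y)$ and $Y=J(\nabla h)-\frac{h}{n-1}JV$-type expressions derived from $Y\lrcorner\omega=dh-\frac{h}{n-1}\theta$. The left-hand side, after two integrations by parts, becomes $\int\langle A,D^2h\rangle$ plus torsion corrections. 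Here integrability of $J$ is used exactly as in the K\"ahler case, to kill the Nijenhuis-type contributions in $\delta\delta\dot g$; the Ricci term drops because $\mathrm{Ric}$ of a Hermitian-type metric pairs with $A$ only through its $J$-anti-invariant part, which must itself be accounted for.

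The main obstacle is the bookkeeping of the $\theta$-terms. Because $D J\neq 0$, extra contributions of the form $\langle A, D\theta\rangle h$, $\langle A,\theta\otimes dh\rangle$ and $\langle A,\theta\otimes\theta\rangle h$ appear. These must cancel exactly against the $\frac{1}{n-1}d^*\theta$ and $\frac14|T|^2$ variations, and against the $-\frac{h}{n-1}\theta$ part of $d_\theta h$. I would first handle the $D\theta$ term by writing $D\theta^\sharp=\nabla\theta^\sharp+(\text{algebraic in }\theta)$ with $\nabla$ the Weyl connection. $K$-invariance makes $\nabla\theta^\sharp$ $J$-invariant, so it is orthogonal to $A$ and drops out; the remaining algebraic terms fix the coefficient $\frac{n}{n-1}$. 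I would then verify the mixed $\theta\otimes dh$ terms by a pointwise computation in a $J$-adapted frame.
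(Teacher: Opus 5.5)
Your outline follows the same route as the paper: rewrite $\mu$ in Riemannian terms, linearize term by term, use the Weyl connection and the Lee-type $K$-symmetry, and pair with $h=\kappa(Y)$. Your observation that $d^*_{g_J}\theta=-\mathrm{div}(\theta^{\sharp_J})$ with respect to the $J$-independent volume form, giving $(d^*\theta)'=-(n-1)\,\mathrm{div}(\hat a_JV)$, is a correct shortcut for the paper's Step~3. However, the decisive cancellation is not supplied, and two of the steps you do state are wrong.

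First, since $g_J=\omega(\cdot,J\cdot)$, one has $\dot g=\omega(\cdot,\hat a_J\cdot)=g(\mathring a\,\cdot,\cdot)$ with $\mathring a:=-J\hat a_J$, not $g(\hat a_J\cdot,\cdot)$. With your choice you are really differentiating along $\mathbb J_J\hat a_J$, and you would end up with $\mathbb G_J(Y^*_J,\hat a_J)$ instead of $\mathbb{\Omega}_J(Y^*_J,\hat a_J)$.

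Second, the torsion contribution does not vanish pointwise. The extension should be \emph{defined} as $\mathrm{scal}(g_J)+\frac1{n-1}d^*\theta+\frac1{2(n-1)}|\theta|^2_{g_J}$, because Proposition~\ref{prop:scalch-scalg} and $|T|^2=\frac2{n-1}|\theta|^2$ are only available on the integrable locus. With that definition, $(|\theta|^2)'=-g(\mathring a,\theta\otimes\theta^\sharp)$, and $\theta\otimes\theta^\sharp$ has non-zero $J$-anti-invariant part wherever $\theta\neq0$.

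The missing idea is what cancels this term. It is the $J$-anti-invariant part of the Riemannian Ricci tensor in $\mathrm{scal}'=d^*(\delta\mathring a)-g(\mathring a,\mathrm{Ric}^\sharp)$, which you say ``drops'' and, in the same breath, ``must be accounted for''. In the locally conformally K\"ahler setting,
\[
\mathrm{Ric}^\sharp=(\mathrm{Ric}^W)^\sharp-\nabla^W(\theta^\sharp)-\tfrac1{2(n-1)}\theta\otimes\theta^\sharp+\tfrac1{2(n-1)}(d^*\theta)\,\mathrm{id} .
\]
Here $\mathrm{Ric}^W$ is $J$-invariant, since locally it is the Ricci tensor of a K\"ahler metric; this is where integrability enters. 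Also $\nabla^W(\theta^\sharp)=(n-1)J\circ\nabla^WV$ is $J$-invariant exactly because $\mathcal L_VJ=0$, i.e.\ because $V$ is central in $\mathfrak k$ and $J$ is $K$-invariant. So the place where the symmetry is indispensable is this Ricci term, not a ``$D\theta$ term'' left over from the other variations.

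What survives is $\mathrm{scal}'=d^*(\delta\mathring a)+\frac1{2(n-1)}g(\mathring a,\theta\otimes\theta^\sharp)$, which cancels exactly against $\frac1{2(n-1)}(|\theta|^2)'$; this is what fixes the torsion coefficient. The coefficient $\frac1{n-1}$ of $d^*\theta$ is not fixed by ``remaining algebraic terms''. It comes from matching $(d^*\theta)'=-(\delta\mathring a)(\theta^\sharp)$ with the term $-\frac{h}{n-1}\theta$ in $d_\theta h$, so that $\langle h,\mu'\rangle_{L^2}=\langle\delta^*(d_\theta h),\mathring a\rangle_{L^2}$.

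On the right-hand side, expanding $\mathcal L_YJ$ through the Levi-Civita derivative $DY$ reintroduces $DJ\neq0$. The clean route uses $\delta^*(d_\theta h)=J\circ\nabla^WY+(\text{multiple of }\mathrm{id})$ together with $2\,\mathrm{Sym}_g(\nabla^WY)=-J\circ\mathcal L_YJ$ for $Y\in\mathfrak{aut}^\star(X,\{\omega\})$. Pairing with the trace-free $\mathring a$ then gives $-\frac12\mathbb{\Omega}_J(Y^*_J,\hat a_J)$.
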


\begin{proof}
When $J$ is integrable, the metric $g_J$ is locally conformally K\"ahler, and the Chern scalar curvature is related to the Riemannian one by
$$ \mathrm{scal}^{\mathrm{Ch}} = \mathrm{scal} - d^*\theta + \frac{1}{2(n-1)}\,|\theta|^2 . $$
This follows from Proposition~\ref{prop:scalch-scalg} above (i.e.\ \cite[Equation~(33)]{gauduchon-mathann}), specialized to the locally conformally K\"ahler case, where $|T|^2=\frac{2}{n-1}|\theta|^2$.
Therefore
$$ \mu = \mathrm{scal} + \frac{1}{n-1}\,d^*\theta + \frac{1}{2(n-1)}\,|\theta|^2 , $$
and the right-hand side is defined for every $J\in\mathcal{J}_{\mathrm{alm}}(\omega)^{K}$: this is the expression we linearize. Note that all three summands depend on $J$ through the metric $g_J$, while $\omega$ and $\theta$ are fixed. Recall from the discussion above that $\mathrm{Aut}^\star(X,\{\omega\})^K$ preserves $\omega$ exactly (not merely the conformal class $\{\omega\}$, as a general special conformal vector field would): indeed, for $Y\in\mathfrak{aut}^\star(X,\{\omega\})^K$ we showed $\theta(Y)=0$, hence $\mathcal{L}_Y\omega=0$. Consequently $\varphi^*\omega=\omega$ for every $\varphi\in\mathrm{Aut}^\star(X,\{\omega\})^K$; since $\theta$ is itself determined by $\omega$, this forces $\varphi^*\theta=\theta$ as well. It follows that $g_{\varphi\cdot J}=\varphi^*g_J$ for any $J\in\mathcal{J}_{\mathrm{alm}}(\omega)^K$, so that the extended map $\mu$ is $\mathrm{Aut}^\star(X,\{\omega\})^K$-equivariant.

\smallskip
\noindent{\itshape Step 1. Linearization formula for the norm of the Lee form.}
Fix $J\in\mathcal{J}(\omega)^{K}$ and a direction $\hat{a}_J=[J,a]\in T_J\mathcal{J}_{\mathrm{alm}}(\omega)^{K}$, and set
$$ \mathring{a} := -J\hat{a}_J , $$
which is $g$-symmetric, $J$-anti-invariant, and trace-free (these three properties follow directly from $\hat a_J\in T_J\mathcal{J}_{\mathrm{alm}}(\omega)^K$). Denote by a prime the derivative at $J$ along $\hat{a}_J$. Differentiating $g_J=\omega(\cdot,J\cdot)$ gives
$$ g' = \omega(\cdot,\hat{a}_J\cdot) = g(\mathring{a}\,\cdot,\cdot) . $$
Since $\theta$ is fixed, differentiating $0=\theta'=(g(\theta^\sharp,\cdot))'=g(\mathring{a}\,\theta^\sharp,\cdot)+g((\theta^\sharp)',\cdot)$ gives $(\theta^\sharp)'=-\mathring{a}(\theta^\sharp)$, and consequently, writing $|\theta|^2=\theta(\theta^\sharp)$ with $\theta$ itself independent of $J$,
$$ (|\theta|^2)' = \theta\big((\theta^\sharp)'\big) = -g(\mathring{a},\theta\otimes\theta^\sharp) . $$

\smallskip
\noindent{\itshape Step 2. Linearization formula for the scalar curvature.}
For the Riemannian scalar curvature, the general variation formula (see \cite[Theorem 1.174]{besse}) together with $\mathrm{tr}(\mathring{a})=0$ gives
$$ \mathrm{scal}' = d^*(\delta\mathring{a}) - g(\mathring{a},\mathrm{Ric}^\sharp) , $$
where $\delta$ denotes the divergence on symmetric endomorphisms. This is where the symmetry intervenes.
To make it clear, it is useful to introduce the Weyl connection.
Recall that the Weyl connection associated to $(g,\theta)$ is the unique torsion-free connection preserving the conformal class of $g$ with Lee form $\theta$, explicitly
$$ \nabla^W_XY := D_XY - \frac{1}{2(n-1)}\Big(\theta(X)Y+\theta(Y)X-g(X,Y)\theta^\sharp\Big) ; $$
one checks directly that $\nabla^Wg=\frac{1}{n-1}\theta\otimes g$ and, crucially, in the locally conformally K\"ahler setting one has $\nabla^WJ=0$.
Writing the Riemannian Ricci tensor $\mathrm{Ric}$ in terms of the Weyl-Ricci tensor $\mathrm{Ric}^W$ (see e.g.~\cite[Proposition A.6]{angella-calamai-pediconi-spotti}), one has
$$ (\mathrm{Ric}^W)^\sharp-\mathrm{Ric}^\sharp = \nabla^W(\theta^\sharp) + \frac{1}{2(n-1)}\,\theta\otimes\theta^\sharp - \frac{1}{2(n-1)}\,(d^*\theta)\,\mathrm{id} . $$
Now, $(\mathrm{Ric}^W)^\sharp$ is symmetric and $J$-invariant (see e.g.~\cite[Corollary A.5]{angella-calamai-pediconi-spotti}), and $\nabla^W(\theta^\sharp)$ is $J$-invariant: indeed, writing $\theta^\sharp=(n-1)JV$ and using $\nabla^WJ=0$, one has $\nabla^W(\theta^\sharp)=(n-1)J\circ\nabla^WV$; since $V$ is central in $\mathfrak k$ and $J$ is $K$-invariant, $[J,\nabla^WV]=0$ as established above, whence $J\circ\nabla^W(\theta^\sharp)=-(n-1)\nabla^WV=\nabla^W(\theta^\sharp)\circ J$. Hence both terms are pointwise orthogonal to the $J$-anti-invariant $\mathring{a}$; the $\mathrm{id}$-term pairs to $\mathrm{tr}(\mathring{a})=0$.
Hence
$$ \mathrm{scal}' = d^*(\delta\mathring{a}) + \frac{1}{2(n-1)}\,g(\mathring{a},\theta\otimes\theta^\sharp) . $$

\smallskip
\noindent{\itshape Step 3. Linearization formula for the codifferential of the Lee form.}
Since $d^*\theta = -\mathrm{tr}\,D(\theta^\sharp)$, we get
$$ (d^*\theta)' = -\mathrm{tr}\big(D'(\theta^\sharp)\big) - \mathrm{tr}\big(D\big((\theta^\sharp)'\big)\big) . $$
For the first term, a computation on a local $g$-orthonormal frame $(\tilde e_\alpha)$, using \cite[Theorem~1.174]{besse} together with $g'=g(\mathring a\,\cdot,\cdot)$, gives $2\,\mathrm{tr}(D'(\theta^\sharp))=\mathrm{tr}(D_{\theta^\sharp}\mathring a)=\mathcal L_{\theta^\sharp}(\mathrm{tr}\,\mathring a)=0$, using $\mathrm{tr}(\mathring a)=0$.
For the second term, using $(\theta^\sharp)'=-\mathring a(\theta^\sharp)$ from Step~1,
$$ -\mathrm{tr}\big(D((\theta^\sharp)')\big) = \mathrm{tr}\big(D(\mathring a(\theta^\sharp))\big) = -(\delta\mathring a)(\theta^\sharp) + g\big(D(\theta^\sharp),\mathring a\big) . $$
A direct computation from the definition of the Weyl connection in Step~2 gives $D(\theta^\sharp) = \nabla^W(\theta^\sharp) + \frac12|\theta|^2\,\mathrm{id}$; since $\nabla^W(\theta^\sharp)$ is orthogonal to $\mathring a$ (Step~2) and $\mathrm{tr}(\mathring a)=0$, we get $g(D(\theta^\sharp),\mathring a)=0$. Combining the three vanishing contributions,
$$ (d^*\theta)' = -(\delta\mathring{a})(\theta^\sharp) . $$

\smallskip
\noindent{\itshape Step 4. Summing up to get the linearization for the momentum map.}
Summing the three formulas with the coefficients of Step 1, the terms in $g(\mathring{a},\theta\otimes\theta^\sharp)$ cancel:
\begin{eqnarray*}
\mu' &=& \mathrm{scal}' + \frac{1}{n-1}(d^*\theta)' + \frac{1}{2(n-1)}(|\theta|^2)' \\
&=& d^*(\delta\mathring{a}) - \frac{1}{n-1}\,(\delta\mathring{a})(\theta^\sharp) .
\end{eqnarray*}
Pairing with the potential $h$ and integrating by parts (via $\langle h,d^*\beta\rangle_{L^2}=\langle dh,\beta\rangle_{L^2}$ and $\langle\beta,\delta\mathring{a}\rangle_{L^2}=\langle\delta^*\beta,\mathring{a}\rangle_{L^2}$),
$$ \langle h,\mu'\rangle_{L^2} = \Big\langle\delta^*\Big(dh-\frac{h}{n-1}\,\theta\Big),\mathring{a}\Big\rangle_{L^2} = \langle\delta^*(d_\theta h),\mathring{a}\rangle_{L^2} . $$

\smallskip
\noindent{\itshape Step 5. Conclusion.}
Since $d_\theta h=Y\lrcorner\omega=g(JY,\cdot)$, we have $(d_\theta h)^\sharp=JY$. A computation with the Weyl connection (see e.g.~\cite[Lemma~1.3]{angella-calamai-pediconi-spotti}, translated to the present normalization) shows that, for any $1$-form $\zeta$,
$$ \delta^*\zeta = \nabla(\zeta^\sharp) - \frac{1}{2}\,(\cdot\lrcorner\, d_\theta\zeta)^\sharp + \frac{1}{2(n-1)}\,g(\zeta,\theta)\,\mathrm{id} ; $$
applying this to $\zeta=d_\theta h$ and using $d_\theta\circ d_\theta=0$ and $\nabla J=0$, we get
$$ \delta^*(d_\theta h) = J\circ\nabla Y + \frac{1}{2(n-1)}\,\theta(JY)\,\mathrm{id} . $$
Pairing with the trace-free $\mathring{a}$ kills the $\mathrm{id}$-term. Finally, differentiating $d_\theta(Y\lrcorner\omega)=0$ yields the identity $2\,\mathrm{Sym}_g(\nabla^W Y)=-J\circ\mathcal{L}_YJ$, valid for special conformal vector fields (see e.g.~\cite[Lemmata 1.3-1.4]{angella-calamai-pediconi-spotti}), and recalling that the fundamental vector field is $Y^*=-\mathcal{L}_YJ$, we conclude
\begin{eqnarray*}
\langle h,\mu'\rangle_{L^2}
&=& \langle J\circ\nabla^W Y,\mathring{a}\rangle_{L^2} = -\langle\nabla^W Y,\hat{a}_J\rangle_{L^2} \\
&=& -\frac{1}{2}\,\langle\mathcal{L}_YJ,\mathbb{J}\hat{a}_J\rangle_{L^2} = -\frac{1}{2}\,\mathbb{\Omega}_J(Y^*_J,\hat{a}_J) ,
\end{eqnarray*}
which is precisely \eqref{eq:moment-map-identity}.
\end{proof}

As in the K\"ahler case, the momentum map picture produces an obstruction of Futaki type. This follows the classical Fujiki-Donaldson argument for the Futaki invariant, using that the constant function $1$ is itself a twisted-Hamiltonian potential, $1=\kappa(-V)$, for the central field $-V\in\mathfrak{z}(\mathfrak{k})$. 

\begin{corollary}[{\cite{angella-calamai-pediconi-spotti}}]\label{cor:futaki}
Under the assumptions of Theorem~\ref{thm:moment-map}, the average $\underline{\mu}:=\int_X\mu(J)\,\frac{\omega^n}{n!}\,\big/\int_X\frac{\omega^n}{n!}$ and the linear map
$$ \mathcal{F}\colon\mathfrak{z}(\mathfrak{k})\to\mathbb{R} , \qquad \mathcal{F}(Y):=\int_X\big(\mu(J)-\underline{\mu}\big)\,\kappa(Y)\,\frac{\omega^n}{n!} , $$
are independent of $J$ within each connected component of $\mathcal{J}(\omega)^{K}$. In particular, the non-vanishing of $\mathcal{F}$ obstructs the existence of $K$-invariant complex structures with $\mu(J)$ constant.
\end{corollary}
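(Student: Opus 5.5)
The plan is to follow the classical Fujiki--Donaldson argument and derive both invariance statements from the momentum map identity \eqref{eq:moment-map-identity} of Theorem~\ref{thm:moment-map}. The key is the choice of Hamiltonians: central elements $Y\in\mathfrak{z}(\mathfrak{k})$ commute with $K$, so they lie in $\mathfrak{ham}(X,\{\omega\})^K$. Their flows also preserve every $K$-invariant $J$ that we will consider. More precisely, I will show that $Y^*_J=-\mathcal{L}_YJ=0$ for $J\in\mathcal{J}(\omega)^K$. The reason is that $Y\in\mathfrak{k}$ and $J$ is $K$-invariant, so $\mathcal{L}_YJ=0$ for every $Y\in\mathfrak{k}$, not only central ones.

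Given this, take a smooth path $J_t$ in a connected component of $\mathcal{J}(\omega)^K$, with $\dot J_t\in T_{J_t}\mathcal{J}_{\mathrm{alm}}(\omega)^K$. By the description of the tangent space, $\dot J_t=\hat a_{J_t}$ with $a=-\tfrac12 J_t\dot J_t$, which is $K$-invariant. Note that $\omega$, and hence the volume form $\omega^n/n!$, does not depend on $J$. For any $Y\in\mathfrak{z}(\mathfrak{k})$ with potential $h=\kappa(Y)$, which is also independent of $J$ since it is defined purely through $\omega$ and $\theta$, the identity \eqref{eq:moment-map-identity} gives
$$\frac{d}{dt}\int_X\mu(J_t)\,h\,\frac{\omega^n}{n!}=-\frac12\,\mathbb{\Omega}_{J_t}\big(Y^*_{J_t},\dot J_t\big)=0 .$$

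Two cases of this identity give the two claims. First, take $Y=-V$, which is central by the Lee-type assumption and has $\kappa(-V)=1$. This shows that $\int_X\mu(J_t)\,\omega^n/n!$ is constant in $t$. The volume is also fixed, so $\underline{\mu}$ is constant. Second, write
$$\mathcal{F}(Y)=\int_X\mu\,\kappa(Y)\,\frac{\omega^n}{n!}-\underline{\mu}\int_X\kappa(Y)\,\frac{\omega^n}{n!} .$$
The first term is constant along the path by the identity above. The second term is constant because $\underline{\mu}$ is constant and $\kappa(Y)$ does not depend on $J$. Since any two points of a connected component can be joined by a piecewise smooth path, $\mathcal{F}$ is constant on each component.

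For the obstruction, suppose $\mu(J)\equiv c$ for some $J$ in the component. Then $c=\underline{\mu}$, so $\mathcal{F}(Y)=\int_X(c-\underline{\mu})\,\kappa(Y)\,\omega^n/n!=0$ for every $Y$. Hence $\mathcal{F}\neq0$ on a component rules out constant $\mu$ anywhere in that component.

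The main obstacle is the connectedness step. $\mathcal{J}(\omega)^K$ is only an analytic subset of an infinite-dimensional manifold, so one must justify that connected components are path-connected by piecewise smooth paths whose velocities lie in $T\mathcal{J}_{\mathrm{alm}}(\omega)^K$. The natural fix is to interpret the statement as concerning smooth path components, or equivalently to invariance along smooth families of integrable $K$-invariant structures. The vanishing $Y^*_J=0$ is the other point to verify, but it is immediate from the $K$-invariance.
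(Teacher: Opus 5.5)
Your proposal is correct and is essentially the argument the paper has in mind: the paper only says that the corollary follows from the classical Fujiki--Donaldson argument via the momentum map identity \eqref{eq:moment-map-identity}, using $1=\kappa(-V)$ for the central field $-V$. You fill this in exactly: for central $Y$ and $K$-invariant $J$ one has $Y^*_J=-\mathcal{L}_YJ=0$, which forces $\int_X\mu(J_t)\,\kappa(Y)\,\frac{\omega^n}{n!}$ to be constant along paths. Your caveat that ``connected component'' should be read as (piecewise) smooth path component of the analytic set $\mathcal{J}(\omega)^K$ is a point that the paper also leaves implicit.
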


\begin{remark}\label{rmk:momentum-covering}
Two comments on the equation $\mu(J)=\underline{\mu}$. First, it does \emph{not} reduce to the constant scalar curvature K\"ahler equation on the minimal covering: the scalar curvature of the K\"ahler metric upstairs differs from the pullback of $\mu$ by terms involving $d^*\theta$ and $|\theta|^2$. Second, in the Vaisman case one has $D(\theta^\sharp)=0$, where $D$ is the Levi-Civita connection, hence $d^*\theta=0$, and the condition reduces to the constant Chern scalar curvature equation on $X$. In general, the above result suggests that the constancy of the momentum map, rather than of the Chern scalar curvature itself, is the natural notion of canonical metric in this setting: this motivates the deformed Yamabe problems of the next section.
\end{remark}

\begin{remark}
The momentum map picture developed in Theorem~\ref{thm:moment-map} naturally raises the question of defining a suitable notion of stability in the locally conformally K\"ahler setting, in the spirit of the Yau-Tian-Donaldson conjecture in K\"ahler geometry.
For recent results in the Sasaki context, see \cite{apostolov-calderbank-legendre, apostolov-jubert-lahdili, lahdili-legendre-scarpa}.
Compare also the work of Goto \cite{goto}, who develops a momentum map framework in generalized K\"ahler geometry (in the sense of Hitchin and Gualtieri) using the pure spinor formalism.
\end{remark}
\section{Deformed Yamabe problems}
\label{sec:twisted-yamabe}

Motivated by the results of the previous sections, we introduce and study a family of deformations of the Yamabe problem on compact complex manifolds. The deformation is defined by adding natural torsion terms to the Riemannian scalar curvature, and includes both the classical Yamabe equation (that we tried to extend in Section~\ref{sec:chern-yamabe}) and the scalar curvature equation arising in locally conformally K\"ahler geometry as a momentum map (as studied in Section~\ref{sec:momentum}). The results of this chapter are obtained in joint work with Francesco Pediconi, Carlo Scarpa, Cristiano Spotti, and Joshua Windare \cite{angella-pediconi-scarpa-spotti-windare}.

\subsection{Deformed scalar curvatures}

Let $X^n$ be a compact complex manifold of complex dimension $n$. For every Hermitian metric $\omega$ on $X$ and for every $t > 0$, we consider the $t$-deformed scalar curvature of $\omega$, defined as
\begin{equation}\label{eq:mu_t}
\mu^{t}(\omega) := \mathrm{scal}^{\mathrm{Ch}}(\omega) +\frac{t-n}{n-1}d^*\theta + \frac{t-2n}{4}|T|^2 \,\, ,
\end{equation}
where $\mathrm{scal}^{\mathrm{Ch}}(\omega)$ denotes the Chern scalar curvature, $T$ is the torsion of the Chern connection, and $\theta$ is the Lee form of $\omega$.

We explain here the choice of the coefficients.
First, we note that some special values of the parameter recover interesting geometric situations:
\begin{itemize}
\item $t=2n-1$: the relation between the Chern scalar curvature ${\rm scal}^{\rm Ch}(\omega)$ of $\omega$ and the Riemannian scalar curvature ${\rm scal}(g)$ of the underlying Riemannian metric $g$ in \eqref{eq:scal-scalch} implies that the $(2n-1)$-deformed scalar curvature coincides with the Riemannian scalar curvature;
\item $t=2n$: the quantity appearing in Section~\ref{sec:momentum} as a momentum map in the locally conformally K\"ahler setting, under the additional symmetry assumptions described there, coincides with the $2n$-deformed scalar curvature, thus providing a geometric interpretation for the latter.
\end{itemize}

Second, we recall that we are ultimately interested in studying the existence of metrics with constant deformed scalar curvature in a given conformal class, and hence in the behaviour of the deformed scalar curvature under conformal transformations. Consider more generally a quantity of the form
$$ \psi^{a,b}(\omega) := \mathrm{scal}^{\mathrm{Ch}}(\omega) +a\,d^*\theta +b\,|T|^2 $$
for parameters $a,b\in\mathbb R$.
By \eqref{eq:scal-conf}, \eqref{eq:theta-conf}, \eqref{eq:T-conf} in Proposition~\ref{prop:conf}, we get that
\begin{eqnarray}\label{eq:conformal-psi-a-b}
e^{f}\psi^{a,b}(e^{f}\omega)
&=& \psi^{a,b}(\omega) +(n +(n-1)a)\Delta f \\
&& + (n -(n-1)a +4b)g(\nabla f,\theta^\sharp) \nonumber \\
&& -(n-1)((n-1)a - 2b)|\nabla f|^2 . \nonumber
\end{eqnarray}
In particular, the choice in \eqref{eq:mu_t} is made so that the coefficient $n -(n-1)a +4b$ of the gradient term vanishes. As mentioned in Remark~\ref{rmk:conformal-formulas-not-variational}, see Remark~\ref{rmk:not-variational} below, this coefficient is the obstruction to the equation being the Euler-Lagrange equation of a suitable functional.

Third, we comment on the restriction $t>0$ rather than allowing general $t\in\mathbb R$. The value $t=0$ leads to a degenerate situation: the conformal transformation rule becomes
\begin{equation}\label{eq:conf-mu0}
e^f \mu^0(e^f\omega) = \mu^0(\omega) ,
\end{equation}
that is, $\mu^0$ is conformally equivariant. Consequently, the Yamabe-type equation $\mu^0(e^f\omega)=c$ is solvable whenever $\mu^0(\omega)$ has the same sign as $c$. Note however that the sign of $\mu^0(\omega)$ does play a role in the existence theory for constant $t$-deformed scalar curvature metrics with $t>0$ (see Theorem~\ref{thm:deformed-yamabe-existence}).

\subsection{Deformed Yamabe equations}

We are interested in the Yamabe problem for the $t$-deformed scalar curvature, that is, in finding Hermitian metrics with constant $t$-deformed scalar curvature within a fixed conformal class. Given $\omega$, we seek $f$ such that $\mu^t(e^f\omega) = c$ for some constant $c \in \mathbb{R}$.

Setting $a=\frac{t-n}{n-1}$ and $b=\frac{t-2n}{4}$ in \eqref{eq:conformal-psi-a-b}, we obtain the conformal transformation rule for the deformed scalar curvature:
\begin{equation}\label{eq:conf-mu-t}
e^{f}\,\mu^t(e^f\omega) = \mu^t(\omega) + t\Delta f -\frac{t(n-1)}{2} |\nabla f|^2 .
\end{equation}
It is useful to perform the following change of variables. We set
$$ f = \frac{2}{n-1}\log u, \qquad u = e^{\frac{n-1}{2}f} . $$
A straightforward computation gives
$$
\Delta f = \frac{2}{n-1}\left(\frac{\Delta u}{u}+\frac{|\nabla u|^2}{u^2}\right) \,\, , \quad
|\nabla f|^2 = \frac{4}{(n-1)^2}\frac{|\nabla u|^2}{u^2} \,\, .
$$
Substituting these identities into the conformal transformation formula, we obtain
$$
u^{\frac{2}{n-1}} \mu^t\left(u^{\frac{2}{n-1}}\omega\right)
=
\mu^t(\omega) + \frac{2t}{n-1}\frac{\Delta u}{u}
$$
and hence $\mu^t(e^f\omega) = c$ if and only if $u$ is a smooth positive solution of the semilinear equation
\begin{equation}\label{eq:t-DY}\tag{$t$-DY}
\frac{2t}{n-1}\Delta u + \mu^t(\omega)\,u = c\, u^{\frac{n+1}{n-1}} .
\end{equation}
We will refer to it as the $t$-deformed Yamabe equation.

The exponent $\frac{n+1}{n-1}$ of $u$ in \eqref{eq:t-DY} plays a fundamental role.
Recall that the Sobolev embedding theorem gives $W^{1,p}(\mathbb{R}^{m})\hookrightarrow L^{p^*}(\mathbb{R}^{m})$, where $\frac{1}{p^*}=\frac{1}{p}-\frac{1}{m}$, see e.g. \cite{aubin,talenti}. In our setting, we have $p=2$ and $m=2n$, so the critical Sobolev exponent for $W^{1,2}(\mathbb{R}^{2n})\hookrightarrow L^{2^*}(\mathbb{R}^{2n})$ is
$$ 2^* = \frac{2 \cdot (2n)}{(2n)-2} = \frac{2n}{n-1} , $$
and one checks that $\frac{n+1}{n-1}=2^*-1$. Thus \eqref{eq:t-DY} is a critical elliptic partial differential equation on a compact Riemannian manifold, in the sense of Aubin \cite[Chapter 5]{aubin} and Hebey-Vaugon \cite{hebey-vaugon}. Moreover, for $t=2n-1$, it reduces to the classical Yamabe equation \cite{yamabe, lee-parker}
$$ \frac{2(2n-1)}{n-1}\Delta u+\mathrm{scal}(g)\, u = c\,u^{2^*-1} . $$

As a consequence, we can readily adapt classical arguments from the Yamabe problem, for instance the following.
\begin{itemize}
\item The sign of the constant $t$-deformed scalar curvature is a conformal invariant. Indeed, if $\omega$ and $\omega'=u^{\frac{2}{n-1}}\omega$ are conformal metrics with constant $t$-deformed scalar curvature $c=\mu^t(\omega)$ and $c'=\mu^t(\omega')$ respectively, then \eqref{eq:conf-mu-t} gives
$$ \frac{2t}{n-1}\Delta u + cu = c' u^{2^*-1}.$$
Integrating over $X$ with respect to the volume form of $\omega$, we get
$$ c \int_X u \, \frac{\omega^n}{n!} = c' \int_X u^{2^*-1}\,\frac{\omega^n}{n!} , $$
so that $c$ and $c'$ necessarily have the same sign (or are both zero).

\item Moreover, uniqueness up to homothety holds when $c,c'\leq 0$. If $c=c'=0$, this is immediate: the equation above reduces to $\Delta u=0$, which admits only constant solutions. If $c,c'<0$, this follows from the maximum principle. Up to rescaling the metrics, we may assume $c=c'=-1$, so that
$$ \frac{2t}{n-1}\Delta u - u = - u^{2^*-1}.$$
At a minimum point $x_{\rm min}\in X$ of $u$, we get
$$ u(x_{\rm min})^{2^*-1} = -\frac{2t}{n-1}\Delta u(x_{\rm min}) + u(x_{\rm min}) \geq u(x_{\rm min}).$$
Since $2^*>2$, this implies $u(x_{\rm min})\geq 1$. Similarly, evaluating at a maximum point gives $u\leq 1$. Therefore $u \equiv 1$, and $\omega'=\omega$.

\item On compact complex manifolds with non-negative Kodaira dimension, the Gauduchon degree provides an upper bound for the expected volume-normalized constant $t$-deformed scalar curvature, in the regime $0 < t \leq 2n$. Indeed, assume that $\omega=e^f\eta$ has constant $t$-deformed scalar curvature $\mu^t(\omega)=c$, where $\eta$ is the Gauduchon representative of unit volume in the conformal class. The conformal rule \eqref{eq:conf-mu-t}, together with the sign of the torsion term for $0<t\leq 2n$, gives
\begin{eqnarray*}
c\, e^f &=& \mu^t(\eta) + t \Delta f - \frac{t(n-1)}{2} |\nabla f|^2 \\
&=& {\rm scal}^{\rm Ch}(\eta) + \frac{t-2n}{4} |T|^2 + t \Delta f - \frac{t(n-1)}{2} |\nabla f|^2 \\
&\leq& {\rm scal}^{\rm Ch}(\eta) + t \Delta f \,\, ,
\end{eqnarray*}
and integrating over $X$ with respect to the volume form of $\eta$ yields
$$ c\int_X e^{f}\frac{\eta^n}{n!} \leq \Gamma_X (\{\omega\}) \,\, , $$
so that $\Gamma_X(\{\omega\})\leq 0$ (a consequence of the assumption $\mathrm{Kod}(X)\geq 0$) forces $c\leq 0$; the H\"older inequality then refines this to the bound $c\,\mathrm{Vol}(X,g_{\omega})^{\frac{1}{n}}\leq \Gamma_X(\{\omega\})$.
\end{itemize}

\subsection{Euler-Lagrange interpretation}

The main motivation behind the definition of the $t$-deformed scalar curvature is that the corresponding Yamabe-type equation arises as the Euler-Lagrange equation of a natural functional, which we call the $t$-deformed Einstein-Hilbert functional. For a Hermitian metric $\omega$, it is defined as
\begin{equation} \label{eq:EH-t}
\mathrm{EH}^{t}(\omega) := \frac{\int_X\mu^{t}(\omega)\frac{\omega^n}{n!}}{\left(\int_X\frac{\omega^n}{n!}\right)^{\frac{n-1}{n}}} \,\, .
\end{equation}
We consider this functional restricted to a conformal class $\{\omega\}$, parametrized as $u^{\frac{2}{n-1}}\omega$ varying $u \in \mathcal{C}^{\infty}(X,\mathbb{R})$ smooth positive function. On this conformal class, we write
\begin{equation} \label{eq:EH-t-conf}
\mathrm{EH}_{\omega}^{t}(u) := \mathrm{EH}^{t}\left(u^{\frac{2}{n-1}}\omega \right) = \|u\|^{-2}_{L^{2^*}}\int_X\left( u^2\mu^{t}(\omega) +\frac{2t}{n-1}|\nabla u|^2\right)\,\frac{\omega^n}{n!} \,\, ,
\end{equation}
and we omit the reference metric $\omega$ from the notation whenever no confusion can arise.

\begin{theorem} \label{thm:variational}
Let $(X^n,\omega)$ be a compact Hermitian manifold of complex dimension $n$ and fix $t>0$. The $t$-deformed Yamabe equation \eqref{eq:t-DY} is the Euler-Lagrange equation of the $t$-deformed Einstein-Hilbert functional $\mathrm{EH}^t$ restricted to the conformal class of $\omega$.
\end{theorem}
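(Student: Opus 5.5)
The plan has two stages: first establish the conformal expression \eqref{eq:EH-t-conf} for $\mathrm{EH}^t$ on $\{\omega\}$, then compute its first variation in $u$ and read off \eqref{eq:t-DY}.

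For the first stage, write $\omega'=u^{\frac{2}{n-1}}\omega$. The volume form is $\frac{\omega'^n}{n!}=u^{\frac{2n}{n-1}}\frac{\omega^n}{n!}=u^{2^*}\frac{\omega^n}{n!}$, so the denominator of \eqref{eq:EH-t} becomes $\|u\|_{L^{2^*}}^{2^*\cdot\frac{n-1}{n}}=\|u\|_{L^{2^*}}^2$. For the numerator we use the transformation rule derived just before \eqref{eq:t-DY},
$$u^{\frac{2}{n-1}}\mu^t(u^{\frac{2}{n-1}}\omega)=\mu^t(\omega)+\frac{2t}{n-1}\frac{\Delta u}{u},$$
which gives $\mu^t(\omega')u^{2^*}=u^2\mu^t(\omega)+\frac{2t}{n-1}u\Delta u$. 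Integrating by parts with $\int_X u\Delta u\,\frac{\omega^n}{n!}=\int_X|\nabla u|^2\frac{\omega^n}{n!}$, which is the Hodge Laplacian identity recalled in Section~\ref{sec:preliminaries}, yields \eqref{eq:EH-t-conf}. The point that matters is that, by the choice of coefficients in \eqref{eq:mu_t}, the transformation rule contains no $g(df,\theta)$ term. Without that cancellation, a first-order term $u\,g(du,\theta)$ would survive in the integrand and spoil the computation, exactly as in Remark~\ref{rmk:not-variational}.

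For the second stage, set $Q(u):=\int_X\big(\mu^t(\omega)u^2+\frac{2t}{n-1}|\nabla u|^2\big)\frac{\omega^n}{n!}$ and $N(u):=\|u\|^2_{L^{2^*}}$. Differentiating along $u+sv$ and integrating by parts gives
$$\frac{d}{ds}\Big|_{s=0}Q=2\int_X v\Big(\frac{2t}{n-1}\Delta u+\mu^t(\omega)u\Big)\frac{\omega^n}{n!},\qquad \frac{d}{ds}\Big|_{s=0}N=2\|u\|_{L^{2^*}}^{2-2^*}\int_X u^{2^*-1}v\,\frac{\omega^n}{n!}.$$
Hence $u>0$ is critical for $\mathrm{EH}^t_\omega$ if and only if, for all test functions $v$,
$$\int_X v\Big(\frac{2t}{n-1}\Delta u+\mu^t(\omega)u-c\,u^{2^*-1}\Big)\frac{\omega^n}{n!}=0,\qquad c:=\frac{Q(u)}{\|u\|_{L^{2^*}}^{2^*}}.$$
This is \eqref{eq:t-DY}, since $2^*-1=\frac{n+1}{n-1}$.

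For the converse, suppose $u$ solves \eqref{eq:t-DY} with some constant $c$. Multiplying by $u$ and integrating shows that $c$ must equal $Q(u)/\|u\|^{2^*}_{L^{2^*}}$, so $u$ is a critical point. This constant is $\mathrm{EH}^t(\omega')\,\mathrm{Vol}(\omega')^{-1/n}$, the volume-normalized value. Scale invariance of $\mathrm{EH}^t_\omega$ in $u$ accounts for the homothety freedom in $c$.

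The main obstacle is the first stage: confirming that the conformal rule \eqref{eq:conf-mu-t} converts exactly into the $u$-form without residual first-order terms. Everything after that is the classical Yamabe computation.
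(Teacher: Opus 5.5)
Your proposal is correct and follows the paper's proof: like the paper, you differentiate the numerator $\mu^t_{\mathrm{tot}}(u)$ and the denominator $\|u\|^2_{L^{2^*}}$ separately and read off \eqref{eq:t-DY} with $c=\mu^t_{\mathrm{tot}}(u)/\|u\|^{2^*}_{L^{2^*}}$. You also add two steps the paper leaves implicit, and both check out: deriving \eqref{eq:EH-t-conf} from the conformal rule, and the converse that any solution has this value of $c$ (obtained by testing the equation against $u$).
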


\begin{proof}
Let $u_\varepsilon$ be a smooth path of smooth positive functions on $X$, with $u := u_{\varepsilon=0}$ and $\dot u := \frac{d}{d\varepsilon} \big|_{\varepsilon=0} u_\varepsilon$. A direct computation gives
$$
\frac{d}{d\varepsilon}\Big\vert_{\varepsilon=0} \|u_\varepsilon\|_{L^{2^*}} = \|u\|_{L^{2^*}}^{1-2^*} \int_X u^{2^*-1}\,\dot{u}\,\frac{\omega^n}{n!} \,\, ,
$$
and, writing
$$ \mu^t_{\rm tot}(u) := \int_X\left( u^2\mu^{t}(\omega)+\frac{2t}{n-1}|\nabla u|^2\right)\frac{\omega^n}{n!} $$
for the numerator in \eqref{eq:EH-t-conf},
$$
\frac{d}{d\varepsilon}\Big\vert_{\varepsilon=0} \mu^t_{\rm tot}(u_\varepsilon) = 2\int_X \left(\frac{2t}{n-1}\Delta u +\mu^t(\omega)\,u\right)\dot{u}\,\frac{\omega^n}{n!} \,\, .
$$
Combining these, the first variation of $\mathrm{EH}^t_\omega(u) = \|u\|_{L^{2^*}}^{-2}\mu^t_{\rm tot}(u)$ is
$$
\frac{d}{d\varepsilon}\Big\vert_{\varepsilon=0}\mathrm{EH}^t (u_\varepsilon) = \frac{2}{\|u\|_{L^{2^*}}^2}\int_X \left(\frac{2t}{n-1}\Delta u+\mu^t(\omega)\,u -\frac{\mu^t_{\rm tot}(u)}{\|u\|_{L^{2^*}}^{2^*}}\,u^{2^*-1}\right)\dot{u}\,\frac{\omega^n}{n!} \,\, .
$$
Hence $\frac{d}{d\varepsilon}\big\vert_{\varepsilon=0}\mathrm{EH}^t (u_\varepsilon) = 0$ for every $\dot{u}$ if and only if $u$ satisfies \eqref{eq:t-DY} with $c=\frac{\mu^t_{\rm tot}(u)}{\|u\|_{L^{2^*}}^{2^*}}$.
\end{proof}

\subsection{Deformed Yamabe invariants}

The functional $\mathrm{EH}^t$ in \eqref{eq:EH-t-conf} is bounded below. Indeed, since $t>0$, the gradient term is non-negative, and the H\"older inequality gives
\begin{eqnarray*}
\mathrm{EH}^t(u)
&=& \frac{\int_X \left(u^2\mu^t(\omega)+\frac{2t}{n-1}|\nabla u|^2\right)\,\frac{\omega^n}{n!}}{\|u\|_{L^{2^*}}^2} \geq \frac{\int_X u^2\mu^t(\omega)\,\frac{\omega^n}{n!}}{\|u\|_{L^{2^*}}^2} \\
&\geq& - \|\mu^t(\omega)\|_{L^k}\cdot\frac{\|u^2\|_{L^{2^*/2}}}{\|u\|_{L^{2^*}}^2} = - \|\mu^t(\omega)\|_{L^k} ,
\end{eqnarray*}
where $k = (1-2/2^*)^{-1}$ is the conjugate exponent of $2^*/2$.

This allows us to define the $t$-deformed Yamabe invariant of $\{\omega\}$ as
\begin{equation}\label{eq:Yam_inv}
\lambda_{X}^{t}(\{\omega\}) := \inf \left\{\mathrm{EH}^{t}(\omega') : \omega' \in \{\omega\}\right\} \in \mathbb R .
\end{equation}

\begin{remark}
In the following section, we show that $\lambda^t_X(\{\omega\})$ is bounded above by a universal constant depending only on the complex dimension $n$ and the parameter $t$, independent of $\{\omega\}$. More precisely,
$$ \lambda^t_X(\{\omega\}) \leq t \Lambda_n , \quad \text{ where } \Lambda_n:= 2n\,{\rm Vol}(S^{2n})^\frac{1}{n}, $$
see Theorem~\ref{thm:deformed-yamabe-existence} for more details.
\end{remark}

\begin{remark}\label{rmk:EH-p}
More generally, the same lower bound holds for the functional
\begin{equation}\label{eq:EH-t-p}
\mathrm{EH}^t_p(u) := \frac{\mu^t_{\mathrm{tot}}(u)}{\|u\|_{L^p}^2}
\end{equation}
for any $2\leq p\leq+\infty$, where the $L^{2^*}$ norm in the denominator is replaced by $L^p$.
Similar computations give
$$ \mathrm{EH}^t_p(u) \geq - \|\mu^t(\omega)\|_{L^k}\cdot\frac{\|u^2\|_{L^{p/2}}}{\|u\|_{L^p}^2} = - \|\mu^t(\omega)\|_{L^k} , $$
where $k=(1-2/p)^{-1}=\frac{p}{p-2}$ is the conjugate exponent of $p/2$.
Here is where the value $2^*$ plays a crucial role: for $2 \leq p < 2^*$, the functional $\mathrm{EH}^t_p$ admits a minimizer. To see this, take a minimizing sequence $\{u_j\}$ for $\mathrm{EH}^t_p$ among positive smooth functions; by homogeneity of the functional, we may assume $\|u_j\|_{L^p}=1$. We claim that $\{u_j\}$ is bounded in $W^{1,2}$. Indeed,
\begin{eqnarray*}
\frac{2t}{n-1}\,\|u\|_{W^{1,2}}^2
&=& \frac{2t}{n-1}\,\int_X (u^2+|\nabla u|^2)\,\frac{\omega^n}{n!} \\
&=& \mathrm{EH}^t_p(u) + \int_X \left(\frac{2t}{n-1}-\mu^t(\omega)\right)u^2\,\frac{\omega^n}{n!} \\
&\leq& \mathrm{EH}^t_p(u) + \left\| \frac{2t}{n-1}-\mu^t(\omega) \right\|_{L^k} ,
\end{eqnarray*}
where $k=(1-2/p)^{-1}$ as above.
For $p<2^*$, the Sobolev embedding $W^{1,2}\hookrightarrow L^p$ is compact. Therefore, up to passing to a subsequence, $u_j$ converges weakly in $W^{1,2}$ and strongly in $L^p$ to some $\tilde{u}\in W^{1,2}$, which minimizes $\mathrm{EH}^t_p$ and satisfies $\|\tilde{u}\|_{L^p}=1$.
By adapting Theorem~\ref{thm:variational} with $2^*$ replaced by $p$, we get that $\tilde{u}$ is a weak solution of the semilinear equation
\begin{equation*}
\frac{2t}{n-1}\,\Delta u + \mu^t(\omega)\, u = \lambda\,u^{p-1} ,
\end{equation*}
for $\lambda = \inf \mathrm{EH}^t_p$. By elliptic regularity, $\tilde{u}$ is smooth. Moreover, $\tilde{u}\geq 0$ as a limit of positive functions, and $\tilde{u}\not\equiv 0$ since $\|\tilde{u}\|_{L^p}=1$.
In fact, the maximum principle implies that $\tilde{u}>0$. Indeed, the equation can be rewritten as $\frac{2t}{n-1}\,\Delta\tilde{u} + \left(\mu^t(\omega) - \lambda\,\tilde{u}^{p-2}\right)\tilde{u} = 0$, so that $\tilde{u}\geq 0$ satisfies $\frac{2t}{n-1}\,\Delta\tilde{u} \geq -C\tilde{u}$ for some constant $C>0$. By the strong maximum principle, either $\tilde{u}\equiv 0$ or $\tilde{u}>0$ everywhere.
\end{remark}

\begin{remark}
We collect here some results comparing the deformed Yamabe invariants $\lambda^t_X(\{\omega\})$ with the Gauduchon degree $\Gamma_X(\{\omega\})$ and with the classical Yamabe invariant $\lambda_X^{\rm Yam}(g)$.
\begin{itemize}
\item For $0 < t \leq 2n$, we have
$$ \lambda^{t}_{X}(\{\omega\}) \leq \Gamma_{X}(\{\omega\}) . $$
Indeed, taking the Gauduchon metric $\eta$ of unit volume in $\{\omega\}$ as a test metric, we get
\begin{eqnarray*}
\lambda^t_X(\{\omega\})
&\leq& \mathrm{EH}^t(\eta) = \int_X \mu^t(\eta) \frac{\eta^n}{n!} \\
&=& \int_X \mathrm{scal}^{\mathrm{Ch}}(\eta) \, \frac{\eta^n}{n!} + \frac{t-2n}{4} \int_X |T(\eta)|^2 \, \frac{\eta^n}{n!} \\
&\leq& \int_X \mathrm{scal}^{\mathrm{Ch}}(\eta) \, \frac{\eta^n}{n!} = \Gamma_X(\{\omega\}) .
\end{eqnarray*}
(Note that, for $t=2n$, i.e., in the momentum case, equality $\lambda^{2n}_{X}(\{\omega\}) = \Gamma_{X}(\{\omega\})$ implies that $\eta$ realizes the infimum of ${\rm EH}^{2n}$ and therefore $\mathrm{scal}^{\mathrm{Ch}}(\eta)$ is constant. In the non-positive case, the converse also holds, by uniqueness up to homothety.)
\item When the conformal class contains a K\"ahler metric $\omega_K\in\{\omega\}$, then
\begin{eqnarray*}
\lambda^t_X(\{\omega\}) \leq \lambda^{\rm Yam}_X(\{g\}) & \text{ for } & 0 < t < 2n-1 , \\
\lambda^t_X(\{\omega\}) \geq \lambda^{\rm Yam}_X(\{g\}) & \text{ for } & t\geq 2n-1 .
\end{eqnarray*}
This follows directly by comparing with the classical Einstein-Hilbert functional
$$ {\rm EH}^{\rm Yam}(u) = \frac{ \int_X \left(u^2\,{\rm scal}(g_K) + \frac{2(2n-1)}{n-1}|\nabla u|^2\right)\,\frac{\omega^n}{n!} }{ \left( \int_X u^{2^*}\, \frac{\omega^n}{n!} \right)^{\frac{n-1}{n}} } $$
with respect to $g_K$, for which $\mu^t(\omega_K)={\rm scal}(g_k)$.
\item In particular, if the conformal class $\{\omega\}$ contains a Kähler metric $\omega_K$ that minimizes the classical Einstein-Hilbert functional, then
$$ \lambda^{t}_{X}(\{\omega\}) = \lambda^{\mathrm{Yam}}_X(\{g\}) \text{ for } 2n-1 \leq t \leq 2n . $$
Indeed, up to normalizing the volume, $\lambda^{\mathrm{Yam}}_X(\{\omega\})=\int_X \mathrm{scal}(g_K)\,\frac{\omega_K^n}{n!} = \Gamma_X(\{\omega\})$, and the conclusion follows by combining the previous two items.
\item In particular, if the conformal class $\{\omega\}$ contains a K\"ahler-Einstein metric $\omega_{\mathrm{KE}}$, then
$$ \lambda^t_X(\{\omega\}) = \mathrm{scal}(g_{\mathrm{KE}})\cdot\mathrm{Vol}(X,\omega_{\mathrm{KE}})^{\frac{1}{n}} \text{ for } 2n-1 \leq t \leq 2n . $$
For example, for $X=\mathbb CP^n$, endowed with the Fubini-Study metric $\omega_{\mathrm{FS}}$, we get
$$ \lambda^t_{\mathbb CP^n}(\{\omega_{\mathrm{FS}}\}) = \frac{4n(n+1)\,\pi}{(n!)^{\frac{1}{n}}} \qquad \text{for } 2n-1 \leq t \leq 2n . $$
Note indeed that the quantity $\mathrm{scal}(g)\cdot\mathrm{Vol}(X,g)^{\frac{1}{n}}$ is invariant under homotheties, so the value above does not depend on the normalization of $\omega_{\mathrm{FS}}$; for definiteness, it can be computed with the normalization in which the holomorphic sectional curvature equals $4$, so that $\mathrm{Ric}(g_{\mathrm{FS}})=2(n+1)\,g_{\mathrm{FS}}$, whence $\mathrm{scal}(g_{\mathrm{FS}})=4n(n+1)$ and $\mathrm{Vol}(\mathbb CP^n,\omega_{\mathrm{FS}})=\frac{\pi^n}{n!}$. In particular, for $n=1$, we have $\lambda^t_{\mathbb CP^1}(\{\omega_{\rm FS}\})=8\pi$, for $1\leq t \leq 2$.
\end{itemize}
\end{remark}

\subsection{Existence results}

In this section, we prove that the $t$-deformed Yamabe invariant is bounded above by a universal constant depending only on the complex dimension $n$ and the parameter $t$, and then we provide a pointwise curvature condition ensuring the existence of solutions to the $t$-deformed Yamabe equation. The argument, following \cite{aubin-1976}, is based on evaluating the deformed Einstein-Hilbert functional on specific test functions arising from the classical Sobolev inequality (Theorem~\ref{thm:talenti}) and a canonical choice of holomorphic normal coordinates (Theorem~\ref{thm:holomorphic-normal}). We start by recalling these two results.

\begin{theorem}[{\cite{aubin,talenti}}]\label{thm:talenti}
Let $m\geq 3$ and $2^*=\frac{2m}{m-2}$. The Sobolev embedding
$$ W^{1,2}(\mathbb{R}^m)\hookrightarrow L^{2^*}(\mathbb{R}^m) $$
is continuous, and the sharp inequality
$$ \|u\|^2_{L^{2^*}(\mathbb{R}^m)} \leq \sigma_m \|\nabla u\|^2_{L^2(\mathbb{R}^m)} $$
holds for all $u \in W^{1,2}(\mathbb{R}^m)$, with optimal constant
$$ \sigma_m = \frac{4}{m(m-2)\mathrm{Vol}(S^m)^{\frac{2}{m}}} , $$
where $\mathrm{Vol}(S^m)$ is the volume of the unit $m$-sphere with respect to the standard metric.
Equality holds if and only if $u$ is a constant multiple or translate of
\begin{equation}\label{eq:sobolev-minimizers}
u_\varepsilon(x) = \left( \frac{\varepsilon}{\varepsilon^2 + |x|^2} \right)^{\frac{m-2}{2}}
\end{equation}
for some $\varepsilon>0$.
\end{theorem}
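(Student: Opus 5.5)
The plan is the classical three-stage argument: first continuity of the embedding, then reduction to radial functions by symmetrization to obtain the sharp constant, and finally the equality cases through the Euler--Lagrange equation.

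\emph{Continuity.} By density it suffices to treat $u\in\mathcal C^\infty_c(\mathbb R^m)$. For $m\geq3$ the Gagliardo--Nirenberg argument gives $\|v\|_{L^{m/(m-1)}}\leq C\|\nabla v\|_{L^1}$. It rests on $|v(x)|\leq\int_{\mathbb R}|\partial_iv|\,dx_i$ in each direction, combined with the generalized H\"older inequality over the $m$ coordinate directions. Applying this to $v=|u|^{\gamma}$ with $\gamma=\frac{2(m-1)}{m-2}$ and using Cauchy--Schwarz on $|u|^{\gamma-1}|\nabla u|$ yields $\|u\|_{L^{2^*}}\leq C\|\nabla u\|_{L^2}$. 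This gives continuity of $W^{1,2}\hookrightarrow L^{2^*}$ with some non-sharp constant.

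\emph{Sharp constant.} Replace $u$ by $|u|$, which changes neither side, and then by its symmetric decreasing rearrangement $u^\star$. Equimeasurability gives $\|u^\star\|_{L^{2^*}}=\|u\|_{L^{2^*}}$, and the P\'olya--Szeg\H{o} inequality gives $\|\nabla u^\star\|_{L^2}\leq\|\nabla u\|_{L^2}$. So it suffices to minimize the Rayleigh quotient $\|\nabla u\|_{L^2}^2/\|u\|_{L^{2^*}}^2$ over radial non-increasing $u=\phi(r)$. This is the one-dimensional Bliss inequality for $\int_0^\infty\phi'(r)^2r^{m-1}\,dr$ versus $\big(\int_0^\infty\phi^{2^*}r^{m-1}\,dr\big)^{2/2^*}$. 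I would prove it in one of two ways. The first is Bliss's substitution $s=r^{2-m}$, which turns the problem into a weighted problem on $(0,\infty)$ that can be solved by an explicit calibration. The second is to establish existence of a radial minimizer directly, using the radial decay bound $\phi(r)\leq C r^{-(m-2)/2}\|\phi\|_{L^{2^*}}$ to prevent vanishing and the dilation invariance to fix a scale, and then to solve its Euler--Lagrange ODE. In either case the extremal is $u_1=(1+r^2)^{-(m-2)/2}$ up to scaling. The value of $\sigma_m$ follows by computing $\|\nabla u_1\|_{L^2}^2$ and $\|u_1\|_{L^{2^*}}^2$. Stereographic projection is convenient here: $u_1^{4/(m-2)}|dx|^2$ is (a multiple of) the round metric pulled back to $\mathbb R^m$. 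Integrating by parts against $-\Delta u_1=m(m-2)u_1^{2^*-1}$ gives $\|\nabla u_1\|_{L^2}^2=m(m-2)\|u_1\|_{L^{2^*}}^{2^*}$, and a pure scaling factor yields $\|u_1\|_{L^{2^*}}^{2^*}=2^{-m}\mathrm{Vol}(S^m)$. Substituting these two facts produces exactly $\sigma_m=\frac{4}{m(m-2)}\mathrm{Vol}(S^m)^{-2/m}$.

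\emph{Equality cases.} Every $u_\varepsilon$, and every translate and multiple of it, attains equality, because the quotient is invariant under dilations, translations and scalar multiples. For the converse, let $u\not\equiv0$ attain equality. Then $|u|$ is also a minimizer and solves, weakly, $-\Delta|u|=c|u|^{2^*-1}$ with $c>0$. Elliptic regularity (Brezis--Kato/Moser iteration for the critical exponent) together with the strong maximum principle gives $|u|>0$ smooth, hence $u$ has a constant sign. A positive solution of $-\Delta v=v^{\frac{m+2}{m-2}}$ with finite energy is then classified by the moving plane method (Gidas--Ni--Nirenberg, or Caffarelli--Gidas--Spruck). This forces $v$ to be radially symmetric about some point, and the ODE then identifies $v$ with a translate of $u_\varepsilon$ up to normalization.

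I expect the main obstacle to be exactly this rigidity step. Avoiding the PDE classification by using Brothers--Ziemer for the equality case of P\'olya--Szeg\H{o} would require controlling the critical set of $\nabla u^\star$, which is delicate. So I would first try the moving-plane route, after applying a Kelvin transform to get the decay at infinity that the method needs.
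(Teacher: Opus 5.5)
The paper does not prove this statement. It quotes it from \cite{aubin,talenti} and uses it only as a black box: the constant $\sigma_{2n}$ and the bubbles $U_\varepsilon$ in the proof of Theorem~\ref{thm:deformed-yamabe-existence}. Your proposal is correct and is essentially the classical argument of those sources. Schwarz symmetrization plus P\'olya--Szeg\H{o}, reducing to Bliss's one-dimensional inequality, is Talenti's route. Your evaluation of the constant also checks out: $-\Delta u_1=m(m-2)\,u_1^{2^*-1}$ and $\|u_1\|^{2^*}_{L^{2^*}}=2^{-m}\mathrm{Vol}(S^m)$ give $\|\nabla u_1\|^2_{L^2}/\|u_1\|^2_{L^{2^*}}=\frac{m(m-2)}{4}\mathrm{Vol}(S^m)^{2/m}=\sigma_m^{-1}$.

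Three small remarks.

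First, for $m=3,4$ the bubbles are not in $L^2(\mathbb{R}^m)$. So both the minimization and the equality clause have to be read in $D^{1,2}(\mathbb{R}^m)$, the completion of $\mathcal{C}^\infty_c$ under $\|\nabla u\|_{L^2}$. The infimum is unchanged by density, and in the paper's application, $m=2n\geq 6$, the issue disappears.

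Second, in your alternative existence route, the radial decay bound gives local compactness on $(0,+\infty)$. It does not by itself stop the $L^{2^*}$-mass of a rescaled minimizing sequence from splitting between $r\to0$ and $r\to+\infty$, which would give a zero limit. Ruling this out, and upgrading a nonzero weak limit to a minimizer, needs the strict subadditivity of $a\mapsto a^{2/2^*}$ together with a Brezis--Lieb splitting. The Bliss route avoids this entirely.

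Third, the Brothers--Ziemer route to rigidity is easier than you expect. If $u$ is extremal, every inequality in $\sigma_m^{-1}\|u\|^2_{L^{2^*}}=\|\nabla|u|\|^2_{L^2}\geq\|\nabla u^\star\|^2_{L^2}\geq\sigma_m^{-1}\|u^\star\|^2_{L^{2^*}}$ is an equality. Hence $u^\star$ is a radial extremal, so by your radial analysis it is a bubble, whose gradient vanishes only at its centre. The critical-set hypothesis of Brothers--Ziemer then holds automatically, so $|u|$ is a translate of $u^\star$. Since $|u|>0$ and $u$ is continuous by the elliptic regularity you already invoke, $u$ has a sign. No moving planes or Kelvin transform are needed.
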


\begin{remark}
For completeness, we recall that
$$ \mathrm{Vol}(S^m) = \frac{2\pi^{\frac{m+1}{2}}}{\Gamma\!\left(\frac{m+1}{2}\right)} , $$
although we will not need the explicit value in what follows. In particular, for $m=2n$, using $\Gamma\!\left(n+\frac{1}{2}\right)=\frac{(2n)!}{4^n\,n!}\sqrt{\pi}$, we get
$$ \mathrm{Vol}(S^{2n}) = \frac{2^{2n+1}\,n!}{(2n)!}\,\pi^n . $$
Therefore,
$$ \sigma_{2n} = \frac{1}{n(n-1)\,\mathrm{Vol}(S^{2n})^{\frac{1}{n}}} = \frac{1}{n(n-1)}\cdot\frac{((2n)!)^{\frac{1}{n}}}{4\cdot 2^{\frac{1}{n}}\,(n!)^{\frac{1}{n}}\,\pi} , $$
and we set
$$ \Lambda_n := \frac{2}{n-1}\cdot\frac{1}{\sigma_{2n}} = 2n\,\mathrm{Vol}(S^{2n})^{\frac{1}{n}} . $$

For small values of $n$, we have
\begin{center}
\renewcommand{\arraystretch}{1.6}
\begin{tabular}{@{}c | c | c@{}}
\toprule
$n$ & $\mathrm{Vol}(S^{2n})$ & $\Lambda_n$ \\
\midrule
$1$ & $4\,\pi$                                  & $8\,\pi \approx 25.133$ \\
$2$ & $\dfrac{8}{3}\,\pi^{2}$                    & $8\sqrt{\dfrac{2}{3}}\,\pi \approx 20.521$ \\
$3$ & $\dfrac{16}{15}\,\pi^{3}$                  & $12\left(\dfrac{2}{15}\right)^{\frac{1}{3}}\pi \approx 19.259$ \\
$4$ & $\dfrac{32}{105}\,\pi^{4}$                 & $16\left(\dfrac{2}{105}\right)^{\frac{1}{4}}\pi \approx 18.674$ \\
$5$ & $\dfrac{64}{945}\,\pi^{5}$                 & $20\left(\dfrac{2}{945}\right)^{\frac{1}{5}}\pi \approx 18.336$ \\
$6$ & $\dfrac{128}{10395}\,\pi^{6}$               & $24\left(\dfrac{2}{10395}\right)^{\frac{1}{6}}\pi \approx 18.116$ \\
\bottomrule
\end{tabular}
\end{center}

One can show that the sequence $\Lambda_n$ is strictly decreasing. For large values of $n$, we have:
\begin{center}
\renewcommand{\arraystretch}{1.4}
\begin{tabular}{@{}c | c@{}}
\toprule
$n$ & $\Lambda_n$ \\
\midrule
$10$    & $\approx 17.689$ \\
$20$    & $\approx 17.380$ \\
$30$    & $\approx 17.279$ \\
$50$    & $\approx 17.199$ \\
$100$   & $\approx 17.139$ \\
$500$   & $\approx 17.091$ \\
$1000$  & $\approx 17.085$ \\
$10000$ & $\approx 17.080$ \\
\bottomrule
\end{tabular}
\end{center}
By Stirling's formula, $\Gamma(z)\sim\sqrt{2\pi/z}\,(z/e)^z$ as $z\to+\infty$; applying this with $z=n+\frac{1}{2}$ gives
\begin{eqnarray*}
\mathrm{Vol}(S^{2n})
&\sim& \sqrt{\frac{2}{\pi}}\cdot\sqrt{n+\frac{1}{2}}\cdot\pi^{n+\frac{1}{2}}\cdot\left(\frac{e}{n+\frac{1}{2}}\right)^{n+\frac{1}{2}} \\
&=& \sqrt{2e} \cdot\left(\frac{\pi e}{n+\frac{1}{2}}\right)^{n}
\qquad \text{ as } n\to+\infty . 
\end{eqnarray*}
Taking the $n$-th root and letting $n\to+\infty$, one finds $\mathrm{Vol}(S^{2n})^{1/n}\sim\pi e/n$, whence
$$ \Lambda_n = 2n\,\mathrm{Vol}(S^{2n})^{1/n} \longrightarrow 2\pi e \qquad \text{ as }n\to+\infty , $$
where $2\pi e\approx17.0795$ is related to the entropy of the standard Gaussian.
\end{remark}

The second result we need concerns the existence of special coordinates in the
non-K\"ahler setting (see also \cite[Lemma 3.4]{liu-yang}), extending the classical
construction available in the K\"ahler case, where $d\omega=0$ guarantees
$\partial_i h_{j\bar\ell}(0)=\partial_j h_{i\bar\ell}(0)$
(see, e.g., \cite[pp.~107-108]{griffiths-harris}).

\begin{theorem}[holomorphic normal coordinates]\label{thm:holomorphic-normal}
Let $X$ be a complex manifold endowed with a Hermitian metric $\omega$. For every point $p \in X$, there exist local holomorphic coordinates centred at $p$ such that the local components $h_{i\bar{j}}$ of $\omega$ satisfy
\begin{equation} \label{eq:normcoord}
h_{i\bar{j}}(0) = \delta_{i}^{j} , \quad
\frac{\partial h_{j\bar{\ell}}}{\partial z^i} (0) + \frac{\partial h_{i\bar{\ell}}}{\partial z^j} (0) = 0 .
\end{equation}
\end{theorem}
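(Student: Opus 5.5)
We start from arbitrary local holomorphic coordinates $(w^i)_i$ centred at $p$ and modify them by a holomorphic change of coordinates that is linear plus quadratic. First, a complex linear change $w=Az$ with $A\in\mathrm{GL}(n,\mathbb C)$ can be used to make the Hermitian matrix $h_{i\bar j}(0)$ equal to the identity. This is just Gram-Schmidt for the positive definite Hermitian form $h(0)$ on $T^{1,0}_pX$. So we may assume from now on that $h_{i\bar j}(0)=\delta_i^j$.

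Second, we perform a quadratic change of coordinates
$$ w^k = z^k + \frac12\, a^k_{ij}\, z^i z^j , \qquad a^k_{ij}=a^k_{ji}\in\mathbb C , $$
which is a local biholomorphism near $0$ with identity differential. Writing $\tilde h_{i\bar j}$ for the components of $\omega$ in the $z$-coordinates, we have
$$ \tilde h_{i\bar j}(z)=h_{k\bar\ell}(w(z))\,\frac{\partial w^k}{\partial z^i}\,\overline{\frac{\partial w^\ell}{\partial z^j}} . $$
The matrix $\tilde h(0)$ is still the identity, since the Jacobian at $0$ is the identity. Differentiating once in $z^m$ at $0$, the antiholomorphic factor contributes nothing, because $\partial_{z^m}\overline{\partial_j w^\ell}=0$. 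We obtain
$$ \partial_m\tilde h_{i\bar j}(0)=\partial_m h_{i\bar j}(0)+a^j_{mi} , $$
where we used $h_{k\bar\ell}(0)=\delta_k^\ell$.

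Third, we choose the coefficients. The required condition \eqref{eq:normcoord} reads
$$ \partial_i\tilde h_{j\bar\ell}(0)+\partial_j\tilde h_{i\bar\ell}(0) = \partial_i h_{j\bar\ell}(0)+\partial_j h_{i\bar\ell}(0)+2a^\ell_{ij}=0 . $$
Both sides of this condition are symmetric in $(i,j)$, so it is consistent with the symmetry $a^\ell_{ij}=a^\ell_{ji}$. We can therefore define
$$ a^\ell_{ij}:=-\frac12\big(\partial_i h_{j\bar\ell}(0)+\partial_j h_{i\bar\ell}(0)\big) , $$
which is symmetric in $(i,j)$ as required, and this gives the statement.

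There is no serious obstacle here. The only points to check are, first, that the symmetric part of $\partial h$ is exactly what quadratic holomorphic changes can kill, while the antisymmetric part (essentially the torsion $T^\ell_{ij}(0)$) is invariant and cannot be removed. Second, one must verify that no antiholomorphic derivative terms enter at first order. It is also worth remarking that in the K\"ahler case $d\omega=0$ forces $\partial_i h_{j\bar\ell}=\partial_j h_{i\bar\ell}$, so these coordinates recover the classical normal coordinates, with all first derivatives vanishing at $p$.
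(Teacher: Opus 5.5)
Your proof is correct and follows essentially the same route as the paper: first normalize $h_{i\bar j}(0)$ by a linear change, then apply a quadratic holomorphic change of coordinates whose second derivatives at $p$ shift $\partial_i h_{j\bar\ell}(0)$ by a symmetric tensor that cancels its symmetric part. The only difference is cosmetic. You write the old coordinates as an exact quadratic polynomial in the new ones, with $a^\ell_{ij}=-\tfrac12\big(\partial_i h_{j\bar\ell}(0)+\partial_j h_{i\bar\ell}(0)\big)$, so the second derivatives are read off directly; the paper instead defines the new coordinates as a quadratic polynomial in the old ones with coefficient $+\tfrac14(\cdots)$ and inverts to second order, which gives the same $2$-jet.
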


\begin{proof}
Let $\{z^i\}$ be holomorphic coordinates centred at $p$ with $h_{i\bar{j}}(0)=\delta_{i}^{j}$, and set
$$
w^k := z^k+\frac{1}{4}\left(\frac{\partial h_{s\bar{k}}}{\partial z^r} (0) + \frac{\partial h_{r\bar{k}}}{\partial z^s} (0)\right)z^rz^s .
$$
By the holomorphic inverse function theorem, $\{w^k\}$ is a system of local holomorphic coordinates centred at $p$ (possibly after shrinking the neighbourhood). Denoting by $\tilde{h}_{i\bar{j}}$ the metric components in the new coordinates, the change of variables gives
$$
z^k = w^k -\frac{1}{4}\left(\frac{\partial h_{s\bar{k}}}{\partial z^r} (0) + \frac{\partial h_{r\bar{k}}}{\partial z^s} (0)\right)w^rw^s +O(|w|^3)
$$
and
$$ \tilde{h}_{i\bar{j}} = h_{k\bar{\ell}}\frac{\partial z^k}{\partial w^i}\frac{\partial \bar{z}^{\ell}}{\partial \bar{w}^{j}} . $$
A direct computation then gives $\tilde{h}_{i\bar{j}}(0) = \delta_{i}^{j}$ and
\begin{eqnarray*}
\frac{\partial \tilde{h}_{j\bar{\ell}}}{\partial w^i}(0) +\frac{\partial \tilde{h}_{i\bar{\ell}}}{\partial w^j}(0)
&=& \frac{\partial h_{j\bar{\ell}}}{\partial z^i}(0)
+\frac{\partial h_{i\bar{\ell}}}{\partial z^j}(0)
+2\frac{\partial^2 z^{\ell}}{\partial w^i\partial w^j}(0) \\
&=& \frac{\partial h_{j\bar{\ell}}}{\partial z^i}(0)
+\frac{\partial h_{i\bar{\ell}}}{\partial z^j}(0)
+4 \left( -\frac{1}{4} \frac{\partial h_{j\bar\ell}}{\partial z^j}(0)+\frac{\partial h_{i\bar \ell}}{\partial z^i}(0) \right) = 0 ,
\end{eqnarray*}
which is precisely \eqref{eq:normcoord}.
\end{proof}

We are now ready to prove the following existence result, due to \cite[Théorème 1]{aubin-1976} and \cite[Theorem 2.1]{hebey-ictp}.

\begin{theorem}[{\cite{aubin-1976,hebey-ictp}}]\label{thm:deformed-yamabe-existence}
Let $X$ be a compact complex manifold of complex dimension $n\geq3$, endowed with a Hermitian metric $\omega$, and fix $t>0$. Then
$$ \lambda^t_X(\{\omega\}) \leq t\Lambda_n . $$
Moreover, if the strict inequality
$$ \lambda^t_X(\{\omega\}) < t\Lambda_n $$
holds, then there exists a positive smooth solution to the $t$-deformed Yamabe equation \eqref{eq:t-DY}, which is a minimizer of the $t$-deformed Einstein-Hilbert functional \eqref{eq:EH-t-conf} restricted to the conformal class $\{\omega\}$.
Finally, if there exists a point $p\in X$ such that
$$ (t-2n+1)\,\mu^0(\omega)(p) > 0 , \quad \text{where } \mu^0(\omega) = \mathrm{scal}^{\mathrm{Ch}}(\omega) - \frac{n}{n-1} d^*\theta - \frac{n}{2}\,|T|^2 , $$
then $\lambda^t_X(\{\omega\}) < t\Lambda_n$, and the previous conclusion applies.
\end{theorem}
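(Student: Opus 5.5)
We follow Aubin's scheme for the classical Yamabe problem. The functional in \eqref{eq:EH-t-conf} has gradient coefficient $\frac{2t}{n-1}$ in place of $\frac{2(2n-1)}{n-1}$, and zeroth-order coefficient $\mu^t(\omega)$ in place of $\mathrm{scal}(g)$. There are three steps.

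\emph{Step 1: the universal bound.} Fix $p\in X$ and holomorphic normal coordinates from Theorem~\ref{thm:holomorphic-normal}, so that $g_{ij}(x)=\delta_{ij}+O(|x|)$ and $\frac{\omega^n}{n!}=(1+O(|x|))\,dx$. Insert the test functions $\varphi_\varepsilon:=\chi\,u_\varepsilon$, where $u_\varepsilon$ are the Talenti bubbles \eqref{eq:sobolev-minimizers} with $m=2n$ and $\chi$ is a cutoff supported in a small ball. As $\varepsilon\to0$ the bubbles concentrate at $p$, so:
\begin{itemize}
\item the zeroth-order term $\int\mu^t u^2$ is $O(\varepsilon^2)$ up to a logarithm when $n=2$; since $n\ge3$ it is $O(\varepsilon^2)$;
\item the metric errors are lower order;
\item the ratio $\|\nabla\varphi_\varepsilon\|_{L^2}^2/\|\varphi_\varepsilon\|_{L^{2^*}}^2$ tends to $1/\sigma_{2n}$.
\end{itemize}
Hence $\mathrm{EH}^t(\varphi_\varepsilon)\to\frac{2t}{n-1}\sigma_{2n}^{-1}=t\Lambda_n$, which gives $\lambda^t_X\le t\Lambda_n$.

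\emph{Step 2: existence under strict inequality.} Use the subcritical approximation of Remark~\ref{rmk:EH-p}. For $p<2^*$ take positive smooth minimizers $u_p$ of $\mathrm{EH}^t_p$ with $\|u_p\|_{L^p}=1$, solving $\frac{2t}{n-1}\Delta u_p+\mu^t u_p=\lambda_p u_p^{p-1}$. Then:
\begin{itemize}
\item \emph{Convergence of the levels.} As in Trudinger--Aubin, $\limsup_{p\to2^*}\lambda_p\le\lambda^t_X$ (after normalizing the volume, by H\"older), and $u_p$ stays bounded in $W^{1,2}$.
\item \emph{Non-concentration.} This is the key point. Use Aubin's sharp Sobolev inequality on compact manifolds: for every $\delta>0$,
$$\|u\|_{L^{2^*}}^2\le(\sigma_{2n}+\delta)\|\nabla u\|_{L^2}^2+C_\delta\|u\|_{L^2}^2 .$$
Combined with $\lambda_p\to\lambda^t_X<t\Lambda_n=\frac{2t}{n-1}\sigma_{2n}^{-1}$, this yields a uniform lower bound on $\|u_p\|_{L^2}$, so the weak limit $u$ is nonzero.
\item \emph{Regularity.} Next obtain uniform $L^r$ bounds for some $r>2^*$ by Trudinger's Moser-iteration argument, testing the equation against $u_p^{1+2\delta}$. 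The strict inequality again absorbs the critical term. Elliptic regularity then gives uniform $\mathcal C^{2,\alpha}$ bounds.
\item \emph{Passage to the limit.} The limit $u\ge0$ is a nontrivial smooth minimizer of $\mathrm{EH}^t$ solving \eqref{eq:t-DY}, and it is positive by the strong maximum principle as in Remark~\ref{rmk:EH-p}.
\end{itemize}
I expect the uniform higher integrability (non-concentration) to be the main technical obstacle. It is, however, a verbatim adaptation of Trudinger--Aubin, since only the constant $\frac{2t}{n-1}$ changes.

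\emph{Step 3: the strict inequality from the local condition.} Refine the expansion of Step~1 to order $\varepsilon^2$ at the point $p$ where $(t-2n+1)\mu^0(\omega)(p)>0$. For $n\ge3$, Aubin's expansion reads
$$\mathrm{EH}^t(\varphi_\varepsilon)=t\Lambda_n-c_n\,\varepsilon^2\,Q(p)+o(\varepsilon^2),\qquad c_n>0 .$$
Here $Q(p)$ collects two contributions: $\mu^t(\omega)(p)$ from the zeroth-order term, and a second-order metric correction to the Dirichlet energy and volume. In Riemannian normal coordinates that correction would be $-\frac{2t}{n-1}\cdot\frac{n-1}{2(2n-1)}\mathrm{scal}(g)(p)$. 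We use holomorphic normal coordinates, which are not Riemannian normal, because they make the first-order terms vanish in the Hermitian setting. Even so, the quadratic part of $\det g$ and of $g^{ij}$ still produces the Riemannian scalar curvature, up to torsion terms computable from \eqref{eq:normcoord}.

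So the expected outcome is
$$Q(p)\propto\mu^t-\tfrac{t}{2n-1}\,\mathrm{scal}(g).$$
To check this, write $\mu^t=\mu^0+\frac{t}{n-1}d^*\theta+\frac{t}{4}|T|^2$ and use \eqref{eq:scal-scalch}. Then
$$\mu^t-\tfrac{t}{2n-1}\,\mathrm{scal}(g)=\mu^0-\tfrac{t}{2n-1}\,\mu^0=\tfrac{2n-1-t}{2n-1}\,\mu^0 ,$$
which is the combination in the statement, up to sign. This identification is the delicate computation. I would carry out the Taylor expansion of $h_{i\bar j}$ to second order in the coordinates of Theorem~\ref{thm:holomorphic-normal}, and verify the sign convention against the case $t=2n-1$, where $Q$ must vanish, consistently with the Yamabe problem requiring the positive mass theorem there. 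A positive $Q(p)$ (after fixing signs) then gives $\lambda^t_X<t\Lambda_n$ for small $\varepsilon$, and Step~2 applies.
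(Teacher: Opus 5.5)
Your architecture matches the paper's. For the universal bound you use cut-off Talenti bubbles centred at $p$. For existence you use the Trudinger--Aubin subcritical scheme as in Lee--Parker: minimizers of $\mathrm{EH}^t_s$, testing against $u_s^{1+2\delta}$, the almost-sharp Sobolev inequality with the margin $\lambda^t_X<t\Lambda_n$, then bootstrap. Steps 1 and 2 are fine, and your identity $\mu^t-\frac{t}{2n-1}\mathrm{scal}(g)=\frac{2n-1-t}{2n-1}\mu^0$ is exactly the paper's Step 10.

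The gap is Step 3, which carries the whole third claim. There the $\varepsilon^2$-coefficient is only predicted, and as you wrote it its sign is backwards.

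\emph{The sign.} The zeroth-order term $\int_X u_\varepsilon^2\mu^t(\omega)\frac{\omega^n}{n!}$ adds a positive multiple of $\mu^t(\omega)(p)\,\varepsilon^2$ to the numerator. So the correct expansion is $\mathrm{EH}^t(u_\varepsilon)=t\Lambda_n+C\varepsilon^2\big(\mu^t(\omega)-\frac{t}{2n-1}\mathrm{scal}(g_\omega)\big)(p)+o(\varepsilon^2)$ with $C>0$. With your form $t\Lambda_n-c_n\varepsilon^2Q(p)$ and $Q$ containing $+\mu^t(\omega)(p)$, the condition $Q(p)>0$ would read $(t-2n+1)\mu^0(p)<0$, the opposite of the hypothesis. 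Your proposed check at $t=2n-1$ cannot catch this, because $Q$ vanishes there for either sign.

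\emph{The coordinates.} Holomorphic normal coordinates do not make the first-order terms vanish in the non-K\"ahler case; your own Step 1 says $g=\delta+O(|x|)$. Condition \eqref{eq:normcoord} kills only the symmetric part, and $\partial_ih_{j\bar\ell}(0)=\frac12T^\ell_{ij}(0)$ survives. This produces quadratic $\theta$- and $T$-terms in $\det(h_{i\bar j})$ and $h^{\bar j i}$. The paper controls them by explicit angular averages, finding exactly $2n\big((n-2)\Phi-(n+1)\Psi\big)=\mathrm{scal}(g_\omega)$ for its $\Phi,\Psi$.

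\emph{A cheap fix.} Use your identity at the level of functionals. Since $\mu^{2n-1}=\mathrm{scal}(g)$, the functional $\mathrm{EH}^{2n-1}$ is the classical Yamabe functional, and
\[
\mathrm{EH}^t(u)=\frac{t}{2n-1}\,\mathrm{EH}^{2n-1}(u)-\frac{t-2n+1}{2n-1}\,\|u\|_{L^{2^*}}^{-2}\int_X\mu^0(\omega)\,u^2\,\frac{\omega^n}{n!} .
\]
Then build the bubbles in geodesic normal coordinates. This is legitimate because $\mathrm{EH}^t$ only involves $g$ and the fixed function $\mu^t(\omega)$, so holomorphic coordinates are not needed.
\begin{itemize}
\item Aubin's classical computation gives $\mathrm{EH}^{2n-1}(u_\varepsilon)=\frac{2(2n-1)}{(n-1)\sigma_{2n}}+o(\varepsilon^2)$ for $2n\geq 6$, and $\frac{t}{2n-1}\cdot\frac{2(2n-1)}{(n-1)\sigma_{2n}}=t\Lambda_n$.
\item The last term equals $-\frac{t-2n+1}{2n-1}\,c\,\mu^0(\omega)(p)\,\varepsilon^2+o(\varepsilon^2)$ with $c>0$, as in the paper's Step 5.
\end{itemize}
This fixes the sign and rules out stray torsion terms at once.
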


\begin{proof}
We split the proof into several steps, following the argument in \cite{aubin-1976} as described in \cite{angella-pediconi-scarpa-spotti-windare}.

\smallskip
\noindent{\itshape Step 0. Setup and notation.}
Choose a point $p\in X$ and fix holomorphic normal coordinates $\{z^i\}$ centred at $p$, as provided by Theorem~\ref{thm:holomorphic-normal}. Write $z^i=x^{2i-1}+\sqrt{-1}\,x^{2i}$ and let $dx=dx^1\wedge\cdots\wedge dx^{2n}$ denote the standard Lebesgue measure on $\mathbb{R}^{2n}$, so that $\frac{\omega^n}{n!}=2^n\det(h_{i\bar{j}})\,dx$.
For ease of notation, we set $\nu_{2n-1}:=\mathrm{Vol}(S^{2n-1})$.

For later use, we set, for $k<n$,
$$
I_k := \int_0^{+\infty} \frac{r^{2n-1+2k}}{(1+r^2)^{2n}} \,dr , $$
and, for $k<n-2$,
$$ J_k := \int_0^{+\infty} \frac{r^{2n-1+2k}}{(1+r^2)^{2n-2}} \,dr $$
(see, e.g., \cite[Lemma 3.5]{lee-parker}). We will use the identities
$$
\frac{I_1}{I_0} = \frac{n}{n-1} , \qquad \frac{J_0}{I_0} = \frac{2(2n-1)}{n-2} , \qquad \frac{I_2}{I_0} = \frac{n(n+1)}{(n-1)(n-2)} ,
$$
as well as the relation
\begin{equation}\label{eq:sigma-I0}
\frac{1}{\sigma_{2n}} = 4n(n-1)(\nu_{2n-1}\,I_0)^{1/n} .
\end{equation}

\smallskip
\noindent{\itshape Step 1. Test functions.}
For $\varepsilon>0$, let $U_\varepsilon$ denote the standard bubble
$$ U_\varepsilon(z) := \left(\frac{\varepsilon}{\varepsilon^2+|z|^2}\right)^{n-1} $$
as in Theorem~\ref{thm:talenti}, which satisfies
$$ \Lambda_n\,\|U_\varepsilon\|^2_{L^{2^*}(\mathbb{R}^{2n})} = \frac{2}{n-1}\,\|\nabla U_\varepsilon\|^2_{L^2(\mathbb{R}^{2n})} . $$
Fix $\delta>0$ small enough so that the ball $B(0,\delta)$ is contained in the local chart, and set $r:=|z|$ the radial coordinate. Let $\eta$ be a smooth radial cutoff function satisfying
$$ \left\{ \begin{array}{l}
0\leq\eta\leq1 ,\\
\eta(z)=1 \text{ for } z \in B_{\delta/2},\\
\mathrm{supp}(\eta)\subset B_{\delta} ,
\end{array}\right. $$
and consider the test function
$$
u_\varepsilon(z) := \eta(|z|)\, U_\varepsilon(z) .
$$
The goal is to evaluate
$$ \mathrm{EH}^t(u_\varepsilon) = \frac{\int_X \left( u_\varepsilon^2\,\mu^t(\omega) + \frac{2t}{n-1}|\nabla u_\varepsilon|^2\right)\,\frac{\omega^n}{n!}}{\left(\int_X u_\varepsilon^{\frac{2n}{n-1}}\,\frac{\omega^n}{n!}\right)^{\frac{n-1}{n}}} $$
and to study its behaviour as $\varepsilon\to 0$.

\smallskip
\noindent{\itshape Step 2. Taylor expansion of the metric tensor.}
In holomorphic normal coordinates at $p$, the metric coefficients admit the Taylor expansion
$$ h_{i\bar{j}}(z) = \delta_{i}^{j} + A_{i\bar{j}}(z) , $$
where
\begin{eqnarray*}
A_{i\bar{j}}(z)
&:=& \frac{1}{2} T_{ki}^{j}(0) z^k + \frac{1}{2} \overline{T_{\ell j}^{i}(0)} \,\bar{z}^\ell \\
&& - \left(\Theta_{k\bar{\ell} i\bar{j}}(0) - \frac{1}{4} T_{ki}^{q}(0)\overline{T_{\ell j}^{q}(0)} \right) z^k\bar{z}^\ell \\
&& + \mathrm{(ch.t.)} + o(|z|^2)
\end{eqnarray*}
is $O(|z|)$.
Here and after, ``$\mathrm{(ch.t.)}$'' denotes charged terms, of order $2$ in this case, i.e., terms of bidegree $(2,0)$ and $(0,2)$.

For the inverse of the metric, we have
\begin{eqnarray*}
h^{\bar{j}i}(z)
&=& \delta_{j}^{i} - A_{j\bar{i}}(z) + A_{j\bar{p}}(z)A_{p\bar{i}}(z) + o(|z|^2) \\
&=& \delta_{j}^{i} -\frac{1}{2}T_{kj}^{i}(0)z^k -\frac{1}{2}\overline{T_{\ell i}^{j}(0)}\,\bar{z}^{\ell} \\
&& +\left(\Theta_{k\bar{\ell}j\bar{i}}(0) +\frac{1}{4}T_{kp}^{i}(0)\overline{T_{\ell p}^{j}(0)} \right)z^k\bar{z}^{\ell} \\
&& +\mathrm{(ch.t.)} +o(|z|^2) .
\end{eqnarray*}

Finally, the Taylor expansion of the determinant of the metric is
\begin{eqnarray*}
\det(h_{m\bar{s}})(z)
&=& 1 + A_{i\bar{i}}(z) + \frac{1}{2} \left( A_{i\bar{i}}(z)^2 - A_{i\bar{j}}(z)A_{j\bar{i}}(z) \right) + o(|z|^2) \\
&=& 1 + \frac{1}{2} \theta_k(0) \, z^k
+ \frac{1}{2} \overline{\theta_{\ell}(0)} \, \bar{z}^\ell \\
&& - \left(\Theta_{k\bar{\ell} p\bar{p}}(0) - \frac{1}{4} \theta_k(0)\overline{\theta_{\ell}(0)} \right) z^k \bar{z}^\ell \\
&& +\mathrm{(ch.t.)} + o(|z|^2) .
\end{eqnarray*}

\smallskip
\noindent{\itshape Step 3. Angular averages of the volume form.}
Since we are working with radial test functions, some terms do not contribute to the integrals. Let $S^{2n-1}_r \subset \mathbb{C}^n$ denote the sphere of radius $r$ centred at the origin, and let $d\sigma_r$ denote the area element of $S^{2n-1}_r$ induced by the flat metric of $\mathbb{C}^n$. The relevant angular averages are
\begin{eqnarray*}
\frac{1}{\nu_{2n-1}}\int_{S_r^{2n-1}} d\sigma_r &=& r^{2n-1} , \\
\frac{1}{\nu_{2n-1}}\int_{S_r^{2n-1}} z^j\bar{z}^i\,d\sigma_r &=& \frac{r^{2n+1}}{n}\,\delta_{i}^{j} , \\
\frac{1}{\nu_{2n-1}}\int_{S_r^{2n-1}} z^j\bar{z}^iz^k\bar{z}^{\ell}\,d\sigma_r &=& \frac{r^{2n+3}}{n(n+1)}\left(\delta_{i}^{j}\delta_{\ell}^{k}+\delta_{\ell}^{j}\delta_{i}^{k}\right) ,
\end{eqnarray*}
while the angular average of charged terms vanishes.

We compute the Taylor expansion of the angular average of the volume form as $r\to 0$:
\begin{eqnarray*}
\lefteqn{ \frac{1}{\nu_{2n-1}}\int_{S_r^{2n-1}} \det(h_{m\bar{s}}) \,d\sigma_r } \\
&=& r^{2n-1} -\frac{1}{n}\left( \Theta_{i\bar{i} j \bar{j}}(0)-\frac{1}{4}\theta_i(0)\overline{\theta_i(0)} \right) r^{2n+1} +o(r^{2n+1}) \\
&=& r^{2n-1} + \Phi(p)\, r^{2n+1} + o(r^{2n+1}) ,
\end{eqnarray*}
where
$$ \Phi := -\frac{1}{2n}\left(\mathrm{scal}^{\mathrm{Ch}} -\frac{1}{4}|\theta|^2\right) . $$

We also need the following computation:
\begin{eqnarray*}
\lefteqn{ h^{\bar{j}i}(z)z^j\bar{z}^i\det(h_{m\bar{s}}(z)) } \\
&=& \delta_{j}^{i}z^j\bar{z}^i +\left(-\Theta_{k\bar{\ell}p\bar{p}}(0)\delta_{j}^{i} +\frac{1}{4}\theta_k(0)\overline{\theta_{\ell}(0)}\delta_{j}^{i} +\Theta_{k\bar{\ell}j\bar{i}}(0) +\frac{1}{4}T_{kp}^{i}(0)\overline{T_{\ell p}^{j}(0)} \right. \\
&& \left. -\frac{1}{4}T_{kj}^{i}(0)\overline{\theta_{\ell}(0)} -\frac{1}{4}\theta_k(0)\overline{T_{\ell i}^{j}(0)} \right)z^j\bar{z}^iz^k\bar{z}^{\ell} +\mathrm{(ch.t.)} +o(|z|^4) ,
\end{eqnarray*}
where $\mathrm{(ch.t.)}$ denotes charged terms of order $\leq 4$. Therefore,
\begin{eqnarray*}
\lefteqn{ \frac{1}{\nu_{2n-1}}\int_{S_r^{2n-1}} h^{\bar{j}i}(z)z^j\bar{z}^i\det(h_{m\bar{s}})\,d\sigma_r } \\
&=& r^{2n+1} +\frac{1}{n(n+1)}\left(-n\Theta_{i\bar{i}j\bar{j}}(0) +\Theta_{i\bar{j}j\bar{i}}(0) +\frac{n+2}{4}\theta_i(0)\overline{\theta_i(0)} +\frac{1}{4}T_{ij}^{k}(0)\overline{T_{ij}^{k}(0)}\right)r^{2n+3} +o(r^{2n+3}) \\
&=& r^{2n+1} + \Psi(p)\, r^{2n+3} + o(r^{2n+3}) ,
\end{eqnarray*}
where
\begin{eqnarray*}\label{tilde-scal}
\Psi &:=& \frac{1}{2n(n+1)}\left(-n\,\mathrm{scal}^{\mathrm{Ch}}(p) +\widetilde{\mathrm{scal}}{}^{\mathrm{Ch}}(p) +\frac{n+2}{4}|\theta(p)|^2 +\frac{1}{4}|T(p)|^2\right) \\
&=& -\frac{1}{2n(n+1)}\left((n-1)\,\mathrm{scal}^{\mathrm{Ch}} +d^*\theta -\frac{n-2}{4}|\theta|^2 -\frac{1}{4}|T|^2\right) .
\end{eqnarray*}

\smallskip
\noindent{\itshape Step 4. Taylor expansion of the denominator.}
We compute
\begin{eqnarray*}
\|u_\varepsilon\|_{L^{2^*}}^{2^*}
&=& 2^n\int_{B(0,\delta)} \eta(|z|)^{2^*} U_\varepsilon(z)^{2^*} \det(h_{m\bar{s}})(z) \, dx \\
&=& 2^n\nu_{2n-1}\int_{0}^{\delta/\varepsilon} \frac{1}{\varepsilon^{2n}}\eta(r\varepsilon)^{2^*} \frac{r^{2n-1}}{(1+r^2)^{2n}} \cdot \left( (r\varepsilon)^{2n-1} +\Phi(p) (r\varepsilon)^{2n+1} + o(\varepsilon^{2n+1})\right) \varepsilon \,dr \\
&=& 2^n\nu_{2n-1} \left( \int_{0}^{+\infty} \frac{r^{2n-1}}{(1+r^2)^{2n}} \, dr + O(\varepsilon^{2n}) \right) \\
&& + 2^n \nu_{2n-1} \Phi(p) \left(\int_{0}^{+\infty} \frac{r^{2n+1}}{(1+r^2)^{2n}}\,dr + O(\varepsilon^{2n-2}) \right) \varepsilon^2 + o(\varepsilon^2) \\
&=& 2^n\nu_{2n-1} \left( I_0 + \Phi(p) I_1\varepsilon^2 + o(\varepsilon^2) \right) .
\end{eqnarray*}
Raising to the power $\frac{2}{2^*}=\frac{n-1}{n}$, we obtain
$$ \|u_\varepsilon\|_{L^{2^*}}^{2} = 2^{n-1}\left(\nu_{2n-1}I_0\right)^{\frac{n-1}{n}} \left( 1 + \Phi(p)\,\varepsilon^2 + o(\varepsilon^2) \right) . $$

\smallskip
\noindent{\itshape Step 5. Taylor expansion of the curvature term.}
We compute
\begin{eqnarray*}
\lefteqn{ \int_X u_\varepsilon^2 \mu^t(\omega) \,\frac{\omega^n}{n!} } \\
&=& 2^n\int_{B(0,\delta)} \eta(|z|)^2 U_\varepsilon(z)^2 \mu^t(\omega)(z) \det(h_{m\bar{s}})(z) \, dx \\
&=& 2^n\nu_{2n-1}\varepsilon^{3-2n} \int_{0}^{\delta/\varepsilon} \eta(r\varepsilon)^2 \frac{1}{(1+r^2)^{2n-2}} \left( \mu^t(\omega)(p) r^{2n-1} \varepsilon^{2n-1} + o(\varepsilon^{2n-1}) \right) \,dr \\
&=& 2^n\nu_{2n-1}\varepsilon^{3-2n} \left( \mu^t(\omega)(p)\varepsilon^{2n-1} \int_{0}^{+\infty} \frac{r^{2n-1}}{(1+r^2)^{2n-2}} \,dr + O(\varepsilon^{2n-4}) + o(\varepsilon^{2n-1}) \right) \\
&=& 2^n\nu_{2n-1}\mu^t(\omega)(p)\, J_0 \,\varepsilon^2 + o(\varepsilon^2) .
\end{eqnarray*}
Note that, for $n=2$, the integral $J_0$ diverges logarithmically, and the term $\mu^t(\omega)(p)\,J_0\,\varepsilon^2$ here is replaced by a term of order $\varepsilon^2\log(1/\varepsilon)$.

\smallskip
\noindent{\itshape Step 6. Taylor expansion of the gradient term.} We compute
\begin{eqnarray*}
\lefteqn{ \|\nabla u_\varepsilon \|_{L^2}^2 } \\
&=& 2^{n}\int_{B(0,\delta)} 2h^{\bar{\ell} k} \left(u_\varepsilon'(z) \frac{\bar{z}^k}{2|z|}\right) \left(u_\varepsilon'(z) \frac{z^\ell}{2|z|}\right) \det(h_{m\bar{s}})(z) \, dx \\
&=& 2^{n-1}\nu_{2n-1}\varepsilon^2 \int_{0}^{\delta/\varepsilon} r^{2n-1} \left(\frac{\eta'(r\varepsilon)}{(1+r^2)^{n-1}} - \frac{2(n-1)}{\varepsilon} \frac{r\,\eta(r\varepsilon)}{(1+r^2)^n} \right)^2 (1+\Psi(p)r^2\varepsilon^2 +o(\varepsilon^2)) \, dr \\
&=& 2^{n+1}(n-1)^2\nu_{2n-1} \int_{0}^{+\infty} \left(\frac{r^{2n-1}}{(1+r^2)^{2n}}+O(\varepsilon^{2n-2}) \right) (r^2+\Psi(p) r^4\varepsilon^2+o(\varepsilon^2)) \, dr \\
&=& 2^{n+1}(n-1)^2\nu_{2n-1} \left(I_1+\Psi(p)\, I_2\,\varepsilon^2 + o(\varepsilon^2)\right) .
\end{eqnarray*}

\smallskip
\noindent{\itshape Step 7. Taylor expansion of the numerator.}
Combining Steps 5 and 6, we obtain
\begin{eqnarray*}
\lefteqn{ \int_X \left(u_{\varepsilon}^2\,\mu^t(\omega) + \frac{2t}{n-1}|\nabla u_{\varepsilon}|^2\right) \frac{\omega^n}{n!} } \\
&=& 2^n\nu_{2n-1} \left( \mu^t(\omega)(p)\, J_0 \,\varepsilon^2 + \frac{2t}{n-1} \cdot 2 (n-1)^2 \left(I_1+\Psi(p)\, I_2\,\varepsilon^2 \right) + o(\varepsilon^2) \right) \\
&=& 2^n\nu_{2n-1} \left( 4t(n-1)I_1 + \left(\mu^t(\omega)(p)\, J_0 + 4t(n-1)\Psi(p)\, I_2\right)\varepsilon^2 + o(\varepsilon^2) \right) .
\end{eqnarray*}

\smallskip
\noindent{\itshape Step 8. Taylor expansion of the functional on the test functions.}
We now put everything together. We use the elementary identity
$$ \frac{n_0+n_2\varepsilon^2+o(\varepsilon^2)}{d_0+d_2\varepsilon^2+o(\varepsilon^2)} = \frac{n_0}{d_0} + \left(\frac{n_2}{d_0}-\frac{n_0\,d_2}{d_0^2}\right)\varepsilon^2+o(\varepsilon^2) . $$
In our case, from Steps 4 and 7, we have
\begin{eqnarray*}
n_0 &:=& 2^n\nu_{2n-1} \cdot 4t(n-1)I_1 , \\
n_2 &:=& 2^n\nu_{2n-1} \cdot \left(\mu^t(\omega)(p)\, J_0 + 4t(n-1)\Psi(p)\, I_2\right) , \\
d_0 &:=& 2^{n-1}\left(\nu_{2n-1}I_0\right)^{\frac{n-1}{n}} , \\
d_2 &:=& d_0\,\Phi(p) .
\end{eqnarray*}
Therefore, recalling also \eqref{eq:sigma-I0}:
\begin{eqnarray*}
\lefteqn{ {\rm EH}^t(u_\varepsilon) } \\
&=& \frac{\int_X \left(u_\varepsilon^2\,\mu^t(\omega) + \frac{2t}{n-1}|\nabla u_\varepsilon|^2\right) \frac{\omega^n}{n!}}{\|u_\varepsilon\|_{L^{2^*}}^2} \\
&=& (\nu_{2n-1}I_0)^\frac{1}{n} \cdot \left( 8nt + \left( 4\cdot\frac{2n-1}{n-2}\mu^t(\omega)(p) + 8t\cdot\frac{n(n+1)}{n-2}\Psi(p) - 8nt\Phi(p) \right)\,\varepsilon^2 + o(\varepsilon^2) \right) \\
&=& \frac{2t}{(n-1)\sigma_{2n}} \\
&& + \frac{1}{2n(n-2)\sigma_{2n}} \left( 2\cdot\frac{2n-1}{n-1}\mu^t(\omega)(p) - \frac{2t}{n-1} \cdot 2n \cdot \left( (n-2)\Phi(p) - (n+1)\Psi(p) \right) \right)\,\varepsilon^2 \\
&& + o(\varepsilon^2) \\
&=& t\Lambda_n + \frac{1}{2n(n-2)\sigma_{2n}} \left( 2\cdot\frac{2n-1}{n-1}\mu^t(\omega)(p) - \frac{2t}{n-1} \cdot \mathrm{scal}(g_{\omega}) \right)\,\varepsilon^2 + o(\varepsilon^2) ,
\end{eqnarray*}
where we used $\Lambda_n:=\frac{2}{(n-1)\,\sigma_{2n}}$ and $2n\big((n-2)\Phi-(n+1)\Psi\big) = \mathrm{scal}(g_{\omega})$.

\smallskip
\noindent{\itshape Step 9. Universal upper bound for the deformed Yamabe invariants.}
Letting $\varepsilon\to 0$ in the previous expression, we get $\mathrm{EH}^t(u_\varepsilon) \to t\Lambda_n$, and therefore
$$ \lambda^t_X(\{\omega\}) \leq \inf_{\varepsilon>0} \mathrm{EH}^t(u_\varepsilon) \leq t\Lambda_n . $$
Note that this is not obvious a priori, since the test functions are constructed using specific holomorphic normal coordinates and do not coincide in general with those used by Aubin in \cite{aubin-1976}.

\smallskip
\noindent{\itshape Step 10. Curvature condition for strict inequality.}
We have the identity
$$ \frac{2(2n-1)}{n-1}\mu^t(\omega) - \frac{2t}{n-1} \mathrm{scal}(g_\omega) = -\frac{2(t-2n+1)}{n-1}\mu^0(\omega) , $$
so that the condition $(t-2n+1)\mu^0(\omega)(p)>0$ at some point $p\in X$ ensures that the correction term in Step 8 is strictly negative, giving
$$ \lambda^t_X(\{\omega\}) < t\Lambda_n . $$

\smallskip
\noindent{\itshape Step 11. Subcritical existence.}
We now show that this condition is sufficient to guarantee the existence of a smooth solution. We follow the argument as in \cite[Section 4]{lee-parker}.
Up to rescaling $\omega$ by a positive constant, which changes
neither the sign nor the vanishing of $\mu^t(\omega)$, nor the validity of the
hypothesis $\lambda^t_{2^*}<t\Lambda_n$, we may assume $\mathrm{Vol}(X,\omega)=1$.
For $2<s<2^*$, the functional $\mathrm{EH}^t_s$ defined in \eqref{eq:EH-t-p} admits a minimizer by Remark~\ref{rmk:EH-p}. Let $u_s$ be a smooth positive minimizer of $\mathrm{EH}^t_s$, normalized so that $\|u_s\|_{L^s}=1$. More precisely, $u_s$ satisfies
\begin{equation}\label{eq:subcritical}
\frac{2t}{n-1}\,\Delta u_s + \mu^t(\omega)\, u_s = \lambda^t_s \, u_s^{s-1} ,
\end{equation}
where
$$ \lambda^t_s := \inf_{u>0} \mathrm{EH}^t_s(u) . $$
We claim that one can extract a subsequence of $\{u_s\}_s$ converging as $s \to 2^*$. The key point is to establish suitable a priori estimates for equation \eqref{eq:subcritical}. Fix $\delta > 0$. From \eqref{eq:subcritical}, we obtain
\begin{eqnarray*}
\lambda^t_s \int_X u_s^{s + 2\delta} \,\frac{\omega^n}{n!}
&=& \int_X u_s^{1 + 2\delta} \left( \frac{2t}{n-1}\,\Delta u_s + \mu^t(\omega)\, u_s \right) \,\frac{\omega^n}{n!} \\
&=& \frac{2t}{n-1} \int_X u_s^{1 + 2\delta} \Delta u_s \,\frac{\omega^n}{n!} + \int_X \mu^t(\omega)\, \left(u_s^{1 + \delta}\right)^2 \,\frac{\omega^n}{n!} \\
&=& \frac{2t}{n-1} (1 + 2\delta) \, \int_X u_s^{2\delta}\, |d u_s|^2 \, \frac{\omega^n}{n!} + \int_X \mu^t(\omega)\, \left(u_s^{1 + \delta}\right)^2 \,\frac{\omega^n}{n!} .
\end{eqnarray*}
Setting
$$ w_s := u_s^{1+\delta} , $$
this gives
$$ \int_X\left(\lambda^t_s\,u_s^{s-2}-\mu^t(\omega)\right)w_s^2 \,\frac{\omega^n}{n!} = \frac{2t(1+2\delta)}{(1+\delta)^2(n-1)} \int_X|dw_s|^2_g \,\frac{\omega^n}{n!} . $$

As a consequence of the Sobolev inequality, as stated for example in \cite[Theorems~2.3 and~3.3]{lee-parker}, for any $\varepsilon>0$, there exists $C_\varepsilon>0$ such that
\begin{eqnarray*}
\| w_s \|_{L^{2^*}}^2
&\leq& (1+\varepsilon)\,\sigma_{2n}\,\|\nabla w_s \|_{L^2}^2 + C_\varepsilon \, \| w_s \|_{L^2}^2 \\
&=& (1+\varepsilon)\,\sigma_{2n}\, \frac{(1+\delta)^2(n-1)}{2t(1+2\delta)} \int_X\left(\lambda^t_s\,u_s^{s-2}-\mu^t(\omega)\right) w_s^2 \,\frac{\omega^n}{n!} + C_\varepsilon \, \| w_s \|_{L^2}^2 .
\end{eqnarray*}

When $\lambda_s\leq 0$, we immediately obtain the estimate
$$ \| w_s \|_{L^{2^*}}^2 \leq C'_{\varepsilon}\, \| w_s \|_{L^2}^2 , $$
where the constant $C'_{\varepsilon}$ depends only on $\varepsilon$ and $(X,\omega)$, but is independent of $s$.

We claim that the same uniform estimate holds also in the case $\lambda_s > 0$, provided that $\lambda^t_{2^*} < t\Lambda_n = \frac{2t}{(n-1)\sigma_{2n}}$.
Indeed, by H\"older's inequality we obtain
$$ \int_X u_s^{s-2}\,w_s^2\,\frac{\omega^n}{n!}
\leq \|u_s^{s-2}\|_{L^{\frac{2^*}{2^*-2}}} \cdot \|w_s^2\|_{L^{\frac{2^*}{2}}}
= \|u_s\|^{s-2}_{L^{\frac{2^*(s-2)}{2^*-2}}} \cdot \|w_s\|^2_{L^{2^*}}
\leq \|w_s\|^2_{L^{2^*}} . $$
Here the last inequality follows from the fact that $s<2^*$, which implies $\frac{2^*(s-2)}{2^*-2}<s$, together with the monotonicity of $L^s$ norms on a compact manifold of unit volume.
Therefore we obtain
\[
\|w_s\|_{L^{2^*}}^2 \;\le\; (1+\varepsilon)\,\sigma_{2n}\,\frac{(1+\delta)^2(n-1)}{2t(1+2\delta)}\,\lambda^t_s\,\|w_s\|_{L^{2^*}}^2 \;+\; C'_\varepsilon\,\|w_s\|_{L^2}^2,
\]
where $C'_\varepsilon$ denotes a possibly larger constant than $C_\varepsilon$, depending only on $(X, \omega)$, $n$, $t$, $\delta$, $\varepsilon$ but independent of $s$.
The crucial observation is the following. First, note that the functionals $\mathrm{EH}^t_s$ are related, for varying $s$, by $\mathrm{EH}^t_{s'}(u) = \|u\|_{L^s}^2\cdot\|u\|_{L^{s'}}^{-2} \cdot \mathrm{EH}^t_s(u)$.
In particular, if $\lambda^t_s<0$ for some $s\in[2,2^*]$, then $\lambda^t_s<0$ for every $s\in[2,2^*]$.
This allows us to apply Aubin's continuity argument \cite[Lemma 4.3]{lee-parker}, which shows that $|\lambda^t_s|$ is non-increasing as a function of $s\in[2,2^*]$. Moreover, since $\lambda^t_{2^*}\ge0$, the function $s\mapsto\lambda^t_s$ is continuous from the left. It is precisely here that the hypothesis $\lambda^t_{2^*}<t\Lambda_n$ comes into play: it provides a strict margin in the coefficient inequality, which is what prevents the sequence $u_s$ from concentrating like a bubble as $s\to(2^*)^-$. Indeed, when $\lambda^t_{2^*}<2t/((n-1)\sigma_{2n})$, for $s$ sufficiently close to $2^*$ we also have $\lambda^t_s<2t/((n-1)\sigma_{2n})$, and we can choose $\varepsilon>0$ and $\delta_0>0$ sufficiently small such that
\[
(1+\varepsilon)\,\sigma_{2n}\,\frac{(1+\delta_0)^2(n-1)}{2t(1+2\delta_0)}\,\lambda^t_s \;<\; (1+\varepsilon)\,\frac{(1+\delta_0)^2}{1+2\delta_0} \;<\; 1 .
\]
Then, shrinking $\delta_0$ further if necessary so that also $2(1+\delta_0)<s$, and using Jensen's inequality (recall $\mathrm{Vol}(X,\omega)=1$), we get
\begin{align*}
\|w_s\|_{L^{2^*}}^2 &\le C''\,\|w_s\|_{L^2}^2 = C''\,\|u_s^{1+\delta_0}\|_{L^2}^2 = C''\,\|u_s\|_{L^{2+2\delta_0}}^{2+2\delta_0} \le C''\,\|u_s\|_{L^s}^{2+2\delta_0} = C'',
\end{align*}
for some constant $C''>0$ depending only on $(X,\omega)$, $n$, $t$, $\varepsilon$ and $\delta_0$, but independent of $s$. Therefore $w_s$ is uniformly bounded in $L^{2^*}$.

Now that we have a uniform bound on $\|w_s\|_{L^{2^*}}$, and recalling $w_s=u_s^{1+\delta_0}$, we get $\|u_s\|_{L^{2^*(1+\delta_0)}}^{1+\delta_0}=\|w_s\|_{L^{2^*}}\leq\sqrt{C''}$, i.e.\ $u_s$ is uniformly bounded in $L^{2^*(1+\delta_0)}$. This is an improvement over the a priori bound $u_s\in L^{2(1+\delta_0)}$, coming from the normalization $\|u_s\|_{L^s}=1$ together with $2(1+\delta_0)<s$ and the monotonicity of $L^s$ norms (recall
$\mathrm{Vol}(X,\omega)=1$).

We now conclude with a standard bootstrap argument, which no longer requires the small constant condition. We now regard \eqref{eq:subcritical} as a linear equation for $u_s$,
\[
\Delta u_s = \frac{n-1}{2t}\Big(\lambda^t_s\,u_s^{s-1}-\mu^t(\omega)\,u_s\Big),
\]
whose right-hand side we already control: since $u_s$ is uniformly bounded in $L^{2^*(1+\delta_0)}$, the term $u_s^{s-1}$ is uniformly bounded in $L^{2^*(1+\delta_0)/(s-1)}$, where the exponent $2^*(1+\delta_0)/(s-1)$ stays in a compact subset of $(1,+\infty)$ as $s$ ranges over $(2,2^*)$ (and hence so does every subsequent exponent produced by the bootstrap argument below).
A standard elliptic bootstrap (Calder\'on-Zygmund estimates followed by Sobolev embedding, iterated finitely many times as the integrability exponent increases to reach $n$, with Calder\'on-Zygmund constants that stay uniformly bounded for the integrability exponent ranging over any fixed compact subset of $(1,+\infty)$) then yields a uniform $L^\infty$ bound on $\{u_s\}_{2<s<2^*}$.
Coming back to \eqref{eq:subcritical} and applying Schauder estimates gives a uniform $C^{2,\alpha}$ bound. By Ascoli-Arzel\`a, a subsequence $u_{s_k}\to u$ converges in $C^2$ as $s_k\to(2^*)^-$. Passing to the limit shows that $u\geq0$ solves \eqref{eq:t-DY} with $c=\lim_k\lambda^t_{s_k}$, and $u\not\equiv0$ since $\|u\|_{L^{2^*}}=\lim_k\|u_{s_k}\|_{L^{2^*}}>0$. Finally, the strong maximum principle upgrades this to $u>0$ everywhere, concluding the proof.
\end{proof}

\begin{remark}
As noted after \eqref{eq:conf-mu0}, the sign of $\mu^0$ at a point $p\in X$ is conformally invariant.
\end{remark}

\begin{remark}\label{rmk:hopf}
The condition $(t-2n+1)\,\mu^0(\omega)(p)>0$ is far from necessary. For example, consider the Hopf manifold
$$ X = (\mathbb{C}^n \setminus\{0\}) / \mathbb{Z} , $$
where the action of $\mathbb{Z}$ is generated by $z\mapsto \alpha z$ for some fixed $\alpha\in\mathbb{C}$ with $0<|\alpha|<1$. Endow $X$ with the Hermitian metric
$$ \omega = |z|^{-2} \delta_i^j \,\sqrt{-1}\, dz^i\wedge d\bar{z}^j , $$
where $\{z^i\}_i$ are the standard coordinates in $\mathbb C^n$ and $\omega_0:=\delta_i^j \,\sqrt{-1}\, dz^i\wedge d\bar{z}^j$ is the standard flat Hermitian structure on $\mathbb C^n$.
Note that $X$ is diffeomorphic to $S^1 \times S^{2n-1}$, the complex structure coincides with the transverse complex structure on $S^{2n-1}$ and sends the Reeb vector field of $S^{2n-1}$ to the generator of $S^1$, and the underlying Riemannian metric is the product of round metrics on $S^1$ and $S^{2n-1}$, whose radii depend on $\alpha$. More precisely, note that the underlying Riemannian metric of $\omega_0$ on $\mathbb C^n$ is $g_0=\omega_0(\cdot,J\cdot)=2\,|dz|^2$, i.e.\ twice the Euclidean metric on $\mathbb R^{2n}$. Using polar coordinates, $z=tu$ with $t=|z|>0$ and $u\in S^{2n-1}$ the unit sphere in $\mathbb C^n$, we get $g=2\frac{|dz|^2}{|z|^2}=2dt^2+2g_{S^{2n-1}}$, where $g_{S^{2n-1}}$ denotes the round metric on $S^{2n-1}$ of radius $1$. Substituting $s:=\log t$, we get $g=2ds^2+2g_{S^{2n-1}}$. The deck transformation $z \mapsto \alpha z$ acts on $s$ by translation $s \mapsto s + \log|\alpha|$. Set $L:=-\log|\alpha|$. Therefore $X \simeq \left( \mathbb R / L \mathbb Z \right) \times S^{2n-1}$ as a Riemannian product, where the $S^{2n-1}$ factor carries the metric $2\,g_{S^{2n-1}}$, i.e.\ a round sphere of radius $r=\sqrt{2}$, and the $S^1$ factor has length $\int_0^L\sqrt2\,ds=\sqrt2\,L$ with respect to $g$, hence radius $\rho=\frac{\sqrt2\,L}{2\pi}=\frac{\sqrt2}{2\pi}\,\big|\log|\alpha|\big|$.

The Hermitian structure is invariant under the transitive action of $U(1) \times U(n)$, so $\mu^t(\omega)$ is constant. A direct computation gives
$$ \Theta_{i\bar{j}k\bar{\ell}} = |z|^{-6}\Big(|z|^2\delta_{i}^{j} -\bar{z}^{i}z^{j}\Big)\delta_{k}^{\ell} , \quad T_{ij}^k = -|z|^{-2}\Big(\bar{z}^{i}\delta_{j}^{k} -\bar{z}^{j}\delta_{i}^{k}\Big) , $$
and therefore
$$
\mathrm{scal}^{\mathrm{Ch}}(\omega) = 2n(n-1) , \quad
d^*\theta = 0 , \quad
|T|^2 = 4(n-1) ,
$$
so that the $t$-deformed scalar curvature is
$$ \mu^t(\omega) = t\,(n-1) . $$
In particular, $\mu^0(\omega)=0$ and $\mu^t(\omega)$ is constant for every $t$, as already noticed, and positive for $t>0$. For completeness, we also compute
$$
\mathrm{EH}^t(\omega) = t\,\Lambda_n\frac{n-1}{2n}\frac{\mathrm{Vol}(S_\rho^1 \times S_r^{2n-1})^{\frac{1}{n}}}{\mathrm{Vol}(S^{2n})^{\frac{1}{n}}} ,
$$
where we recall that the radius of the spheres $r=\sqrt{2}$ and $\rho=\frac{\sqrt2}{2\pi}\big|\log|\alpha|\big|$ depend on the parameter $\alpha$ in the construction, while $S^{2n}$ is the reference unit sphere used to normalize the sharp Sobolev constant $\Lambda_n$.
Recall also that
$$ \mathrm{Vol}(S_R^m ) = \frac{2\pi^{\frac{m+1}{2}}}{\Gamma(\frac{m+1}{2})} \cdot R^m , $$
Note that the coefficient
\begin{eqnarray*}
\gamma_{n,\alpha}
&:=& \frac{n-1}{2n}\left(\frac{\mathrm{Vol}\big(S^1_{\frac{\sqrt2}{2\pi}\big|\log|\alpha|\big|}\times S^{2n-1}_{\sqrt2}\big)}{\mathrm{Vol}(S^{2n})}\right)^{1/n} \\
&=& \frac{n-1}{4n}\left(n\binom{2n}{n}\,\big|\log|\alpha|\big|\right)^{1/n}
\end{eqnarray*}
is not necessarily less than $1$.
For example, for $n=3$ and $\alpha=\frac{1}{2}$, we get $\gamma_{n=3,\alpha=\frac{1}{2}}\approx0.57744$, but $\gamma_{n=15,\alpha=\frac{1}{4}}\approx1.0044$.
For fixed $\alpha$, the asymptotic behaviour of $\gamma(n,\alpha)$ as $n\to +\infty$ is
$$ \gamma(n,\alpha) = 1 + \frac{1}{n}\left(\frac{1}{2}\ln\frac{n}{\pi} \;+\; \ln\big|\log|\alpha|\big| \;-\; 1\right) \;+\; o\!\left(\frac{\ln n}{n}\right), \qquad n\to\infty . $$
\end{remark}

\begin{remark}
When $t = 2n-1$, corresponding to the classical Yamabe problem, the quantity $(t-2n+1)\,\mu^0(\omega)$ vanishes identically, so this criterion gives no information. In the classical Yamabe problem, one instead examines the next coefficient in the Taylor expansion, which depends on the Weyl tensor.
\end{remark}

\begin{remark}
Beyond the case of $\mathbb CP^1\simeq S^2$, it is still not clear whether the equality $\lambda^t_X(\{\omega\})=t\,\Lambda_n$ can ever be realized. (Note also that the existence of complex structures on $S^6$ is itself an open problem.)
\end{remark}

As a consequence of Theorem~\ref{thm:deformed-yamabe-existence}, the situation in the globally conformally K\"ahler setting is completely understood.

\begin{corollary}
Let $(X,\omega_K)$ be a compact K\"ahler manifold of complex dimension $n\geq3$, and fix $t>2n-1$. Then there exists a Hermitian metric conformal to $\omega_K$ with constant $t$-deformed scalar curvature.
\end{corollary}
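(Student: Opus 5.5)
The plan is to apply the final criterion of Theorem~\ref{thm:deformed-yamabe-existence}. Since $t>2n-1$, we have $t-2n+1>0$, so it suffices to find a point $p\in X$ with $\mu^0(\omega)(p)>0$ for some representative $\omega$ of the conformal class. As observed after \eqref{eq:conf-mu0}, the sign of $\mu^0$ at each point is conformally invariant, because $e^f\mu^0(e^f\omega)=\mu^0(\omega)$. We may therefore compute with the K\"ahler representative $\omega_K$. For it, $\theta=0$ and $T=0$, so
$$ \mu^0(\omega_K)=\mathrm{scal}^{\mathrm{Ch}}(\omega_K)=\mathrm{scal}(g_K) . $$
The statement thus reduces to the behaviour of the Riemannian scalar curvature of a K\"ahler metric in the conformal class.

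If $\mathrm{scal}(g_K)(p)>0$ at some point $p$, then $(t-2n+1)\mu^0(\omega_K)(p)>0$. The last part of Theorem~\ref{thm:deformed-yamabe-existence} then gives $\lambda^t_X(\{\omega_K\})<t\Lambda_n$, and hence a smooth positive minimizer $u$ of \eqref{eq:EH-t-conf} solving \eqref{eq:t-DY}. The metric $u^{\frac{2}{n-1}}\omega_K$ is then the required one.

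It remains to treat the case $\mathrm{scal}(g_K)\le0$ everywhere, where the pointwise criterion is unavailable. Here I would bypass the test-function argument and use the sign of the invariant directly. The functional satisfies
$$ \mathrm{EH}^t(\omega_K)=\int_X\mathrm{scal}(g_K)\,\frac{\omega_K^n}{n!}\Big/\mathrm{Vol}^{\frac{n-1}{n}}\le 0<t\Lambda_n , $$
so $\lambda^t_X(\{\omega_K\})\le 0<t\Lambda_n$. The middle part of Theorem~\ref{thm:deformed-yamabe-existence} then again yields a minimizer solving \eqref{eq:t-DY}.

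An alternative for this case is to note that $\lambda^t_X\le0$ forces $\lambda^t_s\le0$ for all subcritical exponents $s$. In that regime Step~11 of the proof already gives uniform estimates without any smallness condition, which can be cited as a consistency check.

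The main subtlety, though a mild one, is exactly this case split. The pointwise criterion handles only the case where $\mathrm{scal}(g_K)$ is somewhere positive. The complementary case needs the observation that any K\"ahler representative gives a nonpositive test value of $\mathrm{EH}^t$, which is strictly below $t\Lambda_n$ because $t>0$. One should also check that the minimizer produced is smooth and positive, but both properties are part of the conclusion of Theorem~\ref{thm:deformed-yamabe-existence}.
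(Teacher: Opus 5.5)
Your proposal is correct and follows the paper's proof essentially verbatim. It makes the same case split on whether $\mu^0(\omega_K)=\mathrm{scal}^{\mathrm{Ch}}(\omega_K)$ is somewhere positive, using the pointwise criterion of Theorem~\ref{thm:deformed-yamabe-existence} in the first case and the test value $\mathrm{EH}^t(\omega_K)\le 0<t\Lambda_n$ in the second. The appeal to conformal invariance of the sign of $\mu^0$ is harmless but unnecessary, since $\omega_K$ can be taken directly as the reference metric in that theorem.
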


\begin{proof}
Up to homothety, assume that $\mathrm{Vol}(X,\omega_K)=1$. Since $\omega_K$ is K\"ahler, we have $\theta=T=0$, and therefore
$$ \mu^0(\omega_K) = \mathrm{scal}^{\mathrm{Ch}}(\omega_K) - \frac{n}{n-1} \, d^*\theta - \frac{n}{2} \, |T|^2 = \mathrm{scal}^{\mathrm{Ch}}(\omega_K) . $$
If there exists a point $p\in X$ where $\mathrm{scal}^{\mathrm{Ch}}(\omega_K)(p)>0$, then $(t-2n+1)\mu^0(\omega_K)(p)>0$ since $t>2n-1$, and Theorem~\ref{thm:deformed-yamabe-existence} applies. Otherwise, $\mathrm{scal}^{\mathrm{Ch}}(\omega_K)\leq 0$ everywhere, so
$$ \lambda^t_X(\{\omega_K\}) \leq \mathrm{EH}^t(\omega_K) = \int_X \mathrm{scal}^{\mathrm{Ch}}(\omega_K)\,\frac{\omega_K^n}{n!} \leq 0 < t\Lambda_n , $$
and Theorem~\ref{thm:deformed-yamabe-existence} applies again.
\end{proof}

\begin{remark}
Several natural directions are not pursued here. One could study the critical points of the $t$-deformed Einstein-Hilbert functional on the full space of Hermitian metrics on $X$, rather than on a single conformal class. This leads to the $t$-deformed Yamabe invariant
$$ \sigma^t_{\mathbb{C}}(X) := \sup_{\omega \text{ Hermitian}} \lambda^t_{X}(\{\omega\}) , $$
a function of the parameter $t$. Note that, even for $t=2n-1$, this does \emph{not} recover the classical Yamabe invariant, since the supremum ranges only over Hermitian metrics on $X$, and not over all Riemannian metrics. It would be interesting to investigate the relation between $\sigma^t_{\mathbb{C}}(X)$ and the underlying complex geometry, in the spirit of \cite{lebrun-CAG, albanese-lebrun}. One could also restrict the supremum to special classes of Hermitian metrics, K\"ahler or non K\"ahler, as done in the Sasakian setting in \cite{lahdili-legendre-scarpa}, where a related invariant is linked to the $K$-semistability of K\"ahler cones.
\end{remark}

\subsection{Examples: cohomogeneity one metrics on Hirzebruch surfaces}

We conclude by showing a construction for metrics with positive constant deformed scalar curvature, beyond the locally homogeneous examples. The construction follows the cohomogeneity-one method that Page \cite{page} and B\'erard Bergery \cite{berardbergery} used to produce the first non-homogeneous Einstein metrics with positive scalar curvature, and that was adapted to the Hermitian non-K\"ahler setting in \cite{angella-pediconi} to construct non-homogeneous metrics with constant Chern scalar curvature. We specialize to Hirzebruch surfaces and to the momentum case $t=2n=4$.

For an integer $m>0$, the $m$-th Hirzebruch surface $X_m$ is the total space of the $\mathbb CP^1$-bundle $\pi_m\colon X_m\to\mathbb CP^1$ associated with the $U(1)$-principal bundle $s_m\colon\Sigma_m=S^3/\mathbb{Z}_m\to\mathbb CP^1$ of Euler class $-\frac{m}{2\pi}[\omega_{\rm FS}]$, where $\omega_{\rm FS}$ is normalized so that $\mathrm{Ric}(g_{\rm FS})=2g_{\rm FS}$. It is compact and simply connected, and carries a holomorphic action of $U(2)$ of cohomogeneity one, with regular part diffeomorphic to $\Sigma_m\times(0,L)$ and two singular orbits, both biholomorphic to $\mathbb CP^1$.

Fix a principal connection $\vartheta_m$ on $\Sigma_m$ with $d\vartheta_m=-m\,s_m^*\omega_{\rm FS}$, and for $\nu,\kappa>0$ let $\tilde{g}(\nu,\kappa)$ be the $U(2)$-invariant metric on $\Sigma_m$ whose vertical part is the round circle of length $\frac{2\pi}{m}\nu$, whose horizontal distribution is $\ker\vartheta_m$, and which makes $s_m\colon(\Sigma_m,\tilde{g}(\nu,\kappa))\to(\mathbb CP^1,\kappa^2 g_{\rm FS})$ a Riemannian submersion with totally geodesic fibres. Given smooth positive functions $f,p\colon(0,L)\to\mathbb{R}$, the formula
$$ g(f,p) := dr^2 + \tilde{g}(f(r),p(r)) $$
defines a $U(2)$-invariant Hermitian metric on the regular part, extending smoothly to $X_m$ if and only if $f$ extends to a smooth odd function with $f(L+r)=-f(L-r)$ and $f'(0)=1=-f'(L)$, and $p$ extends to a smooth even function with $p(L+r)=p(L-r)$. By \cite[Theorem A]{angella-pediconi}, the metric $g(f,p)$ is always globally conformally K\"ahler, and it is K\"ahler if and only if $2p p'+mf=0$. Its Chern scalar curvature and the codifferential of its Lee form are computed in \cite{angella-pediconi} to be
\begin{eqnarray*}
\mathrm{scal}^{\mathrm{Ch}}(r) &=& -2\frac{f''(r)}{f(r)} - 2\frac{p''(r)}{p(r)} + 2\left(\frac{p'(r)}{p(r)}-\frac{f'(r)}{p(r)}\right)\frac{p'(r)}{p(r)} \\
&& + \frac{4}{p(r)^2} + 2m\left(f'(r)+f(r)\frac{p'(r)}{p(r)}\right)\frac{1}{p(r)^2} , \\
d^*\theta(r) &=& \frac{1}{p(r)^2}\left( \big(2p(r)p'(r)+mf(r)\big)\frac{f'(r)}{f(r)} + 2p(r)p''(r) + 2p'(r)^2 + mf'(r) \right) .
\end{eqnarray*}

\begin{theorem}[{\cite{angella-pediconi-scarpa-spotti-windare}}]\label{thm:pediconi-metrics}
For every integer $m>0$, the Hirzebruch surface $X_m$ admits a $U(2)$-invariant, globally conformally K\"ahler, non-K\"ahler metric with positive constant momentum $\mu^{t=4}$.
\end{theorem}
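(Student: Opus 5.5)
Since every $g(f,p)$ is globally conformally K\"ahler, the plan is to work inside a fixed conformal class rather than solve an ODE system directly. Start from a $U(2)$-invariant K\"ahler metric $\omega_K=g(f_0,p_0)$ on $X_m$ (Calabi ansatz, $2p_0p_0'+mf_0=0$), and look for $\omega=u^{2}\omega_K$ (since $n=2$, $u^{\frac{2}{n-1}}=u^2$) with $u$ a $U(2)$-invariant function, i.e.\ $u=u(r)$, solving the $4$-deformed Yamabe equation \eqref{eq:t-DY} with $t=4$, $n=2$:
$$ 8\,\Delta_{\omega_K} u + \mathrm{scal}(g_K)\,u = c\,u^{3}, $$
using $\mu^t(\omega_K)=\mathrm{scal}(g_K)$ for K\"ahler metrics. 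Because $n=2$ is excluded from Theorem~\ref{thm:deformed-yamabe-existence}, I will not rely on the Aubin-type test-function argument over all of $X_m$. Instead I will use the symmetry: restrict $\mathrm{EH}^4$ to $U(2)$-invariant functions. Invariant functions depend only on $r\in[0,L]$, the orbits have real dimension $\geq 2$ (the singular orbits are $\mathbb CP^1$'s, the regular ones are $3$-dimensional), and so the effective dimension is small. Concretely, the space of $U(2)$-invariant $W^{1,2}$ functions embeds compactly into $L^{q}$ for some $q>2^*=4$ (Hebey--Vaugon type symmetric Sobolev embedding; for invariant functions the problem is essentially $2$-dimensional near singular orbits and $1$-dimensional near regular ones). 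Hence the critical exponent is subcritical on the invariant subspace.

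The steps, in order, are as follows. First, fix $\omega_K$ and note that the invariant Einstein--Hilbert functional is bounded below, as in the earlier discussion of the deformed Yamabe invariants. Second, use the compact embedding to extract a minimizer $u\geq0$ of $\mathrm{EH}^4$ among invariant functions with $\|u\|_{L^4}=1$. By the principle of symmetric criticality (Palais, the functional being $U(2)$-invariant), $u$ is a weak solution of the full equation. Elliptic regularity and the strong maximum principle, as in Remark~\ref{rmk:EH-p}, then give $u$ smooth and positive. Third, the sign of $c$: choose $\omega_K$ with positive total scalar curvature, which is possible because $X_m$ is rational and the Calabi-type K\"ahler metrics on Hirzebruch surfaces can be taken with $\mathrm{scal}>0$ for suitable K\"ahler class. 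Testing with $u\equiv1$ is not enough to get $c>0$, so I argue instead via the conformal invariance of the sign of $c$. If $\lambda_1$, the first eigenvalue of $8\Delta+\mathrm{scal}(g_K)$, is positive, then $c>0$, and integrating the equation against $u$ yields this. For $\mathrm{scal}(g_K)>0$ pointwise, $\lambda_1>0$ is immediate.

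The main obstacle is the final claim that the resulting $\omega=u^2\omega_K$ is \emph{non-K\"ahler}, i.e.\ that $u$ is non-constant. A constant $u$ occurs exactly when $\mathrm{scal}(g_K)$ is constant, that is, when $\omega_K$ is cscK. Hirzebruch surfaces $X_m$, $m\geq1$, admit no cscK metrics (the Futaki invariant / Matsushima obstruction, since $\mathrm{Aut}$ is non-reductive); they do carry extremal Calabi metrics with non-constant scalar curvature. I would therefore pick $\omega_K$ an extremal metric with $\mathrm{scal}>0$, or any invariant K\"ahler metric with positive scalar curvature. Then any solution $u$ is non-constant, because a constant $u$ would force $\mathrm{scal}(g_K)=c\,u^2$ to be constant. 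Finally, I need that $u^2\omega_K$ is not K\"ahler: a non-constant conformal factor destroys closedness in complex dimension $2$, since $d(u^2\omega_K)=2u\,du\wedge\omega_K\neq0$ wherever $du\neq0$, the map $\wedge\omega_K$ being injective on $1$-forms. The existence of a positive-scalar-curvature invariant K\"ahler metric is the point I would double-check via the explicit formula for $\mathrm{scal}^{\mathrm{Ch}}$ above, with $2pp'+mf=0$ and a small fibre size.
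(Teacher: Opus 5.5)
Your argument is correct, but it takes a genuinely different route from the paper's.

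\textbf{The paper's route.} It stays at the ODE level. The ansatz $f_\varphi=-\frac1m\varphi'$, $p_\varphi=\sqrt{\tfrac52\varphi}$ builds non-K\"ahlerness in from the start, since $2p_\varphi p_\varphi'+mf_\varphi=\tfrac32\varphi'<0$. The equation $\mu^4=c$ becomes a third-order ODE, which becomes linear under the substitution $\varphi'=-\sqrt{u(\varphi)}$ and is solved in closed form. The last boundary condition is met by an intermediate-value argument in $k\in(0,1)$. Finally, $c_*>0$ and $u_*>0$ follow from maximum-principle arguments on the explicit solution.

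\textbf{Your route.} You solve \eqref{eq:t-DY} variationally in the conformal class of a $U(2)$-invariant K\"ahler metric. The key input is that the minimal orbit dimension is $2$ in real dimension $4$. Hence invariant $W^{1,2}$ functions embed compactly in every $L^q$, and the critical exponent $2^*=4$ becomes subcritical on the invariant subspace. This also sidesteps the restriction $n\geq3$ in Theorem~\ref{thm:deformed-yamabe-existence}. Symmetric criticality, elliptic regularity and the maximum principle then give a smooth positive solution. Non-K\"ahlerness comes from the absence of cscK metrics on $X_m$ for $m\geq1$, together with the injectivity of $\wedge\omega_K$ on $1$-forms.

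\textbf{One correction.} Your sign argument for $c$ needs \emph{pointwise} positivity of $\mathrm{scal}(g_K)$, not merely positive total scalar curvature as you first state. Pointwise positivity is available from Calabi-ansatz metrics with small fibres. On such an explicit metric you can also check directly that $\mathrm{scal}(g_K)$ is non-constant, without invoking the cscK obstruction.

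\textbf{What each approach buys.} Your route is softer and more general. It works verbatim for every $t>0$, including the Riemannian Yamabe case $t=3$. It also works in the conformal class of every $U(2)$-invariant K\"ahler metric of positive scalar curvature. The price is reliance on external inputs (symmetric Sobolev embeddings, Palais' principle, the Matsushima--Lichnerowicz obstruction), and it gives no explicit description of the metric. The paper's argument is tied to $t=4$ and to a specific ansatz, but it is elementary and produces explicit metrics, with $c_*=c(k_*)$ computable.
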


\begin{proof}[Sketch]
Consider the ansatz
$$ f_\varphi(r) := -\frac{1}{m}\varphi'(r) , \qquad p_\varphi(r) := \sqrt{\tfrac{5}{2}\,\varphi(r)} , $$
for a smooth, positive, decreasing function $\varphi\colon(0,L)\to\mathbb{R}$. Then $2p_\varphi p_\varphi'+mf_\varphi=\frac{3}{2}\varphi'<0$, so $g(f_\varphi,p_\varphi)$ is never K\"ahler. Substituting the ansatz into the formulas above, the equation $\mu^4(g(f_\varphi,p_\varphi))=c$ becomes the third-order ordinary differential equation
$$ 10\,\varphi^2\,\frac{\varphi'''}{\varphi'} + 2\,\varphi\varphi'' - 3\,(\varphi')^2 + 5c\,\varphi^2 - 8\,\varphi = 0 , $$
and the smoothness conditions translate into the boundary conditions $\varphi(0)=1$, $\varphi(L)=k$, $\varphi'(0)=\varphi'(L)=0$, $\varphi''(0)=-q=-\varphi''(L)$, for some $0<k<1$ and $q>0$. The key trick is the substitution $\varphi'(r)=-\sqrt{u(\varphi(r))}$: under the change of variable $t=\varphi(r)$, the ordinary differential equation becomes linear,
$$ 5t^2u''(t) + tu'(t) - 3u(t) + 5c\,t^2 - 8t = 0 , $$
with general solution
$$ u_{a,b,c}(t) = a\,t^{\frac{2-\sqrt{19}}{5}} - 4t + b\,t^{\frac{2+\sqrt{19}}{5}} - \frac{5c}{9}\,t^2 , \qquad a,b,c\in\mathbb{R} , $$
and the boundary conditions become $u(1)=u(k)=0$, $u'(1)=-2m$, $u'(k)=2m$. Imposing the first three determines $a(k)$, $b(k)$, $c(k)$ explicitly; the fourth becomes an equation $\alpha(k)/\beta(k)=0$ in the remaining parameter $k$. An asymptotic analysis at $k=1$ (where $\alpha$ vanishes to third order with $\alpha'''(1)=-\frac{36\sqrt{19}}{25}m<0$) and at $k=0$ (where $\alpha(0)<0$) shows the existence of $k_*\in(0,1)$ with $\alpha(k_*)=0$, while $\beta>0$ on $(0,1)$. Evaluating the linear ordinary differential equation at an interior maximum point $t_0$ of the resulting solution $u_*$ gives $5c_*t_0^2=-5t_0^2u_*''(t_0)+3u_*(t_0)+8t_0>8t_0$, whence $c_*>0$; a similar maximum-principle argument at a hypothetical interior minimum shows $u_*>0$ on $(k_*,1)$. Inverting $\varphi_*(r)$ from $\varphi_*'=-\sqrt{u_*(\varphi_*)}$ then produces a smooth solution of the original boundary value problem, and the metric $g(f_{\varphi_*},p_{\varphi_*})$ is the desired one, with $\mu^4=c_*>0$.
\end{proof}

\begin{footnotesize}
\noindent{\bfseries Acknowledgements.}
These notes were prepared for the course delivered by the author at the CIME Summer School \emph{Cohomological properties of almost complex manifolds and special structures}, held in Cetraro (CS), July 13--17, 2026. The author warmly thanks the Scientific Directors of the School, Anna Maria Fino, Adriano Tomassini, and Scott O. Wilson, for the kind invitation and for planning such a stimulating and enjoyable event, and all the participants for contributing to a lively and positive atmosphere.
The author is deeply grateful to his collaborators, Simone Calamai, Francesco Pediconi, Carlo Scarpa, Cristiano Spotti, and Joshua Windare, whose joint works are the subject of these notes, and with whom collaborating was always as joyful and caring as it was mathematically enriching, as well as to all other (past and future) collaborators for their continued support along the way.

\smallskip

\noindent{\bfseries Funding.}
The author is partially supported by GNSAGA of INdAM.

\smallskip

\noindent{\bfseries Competing interests.}
The author is a member of the Scientific Committee and serves as Scientific Secretary of CIME.

\smallskip

\noindent{\bfseries Tool and computational resource disclosure.}
Large language models were used to assist with English proofreading, notational consistency, and \LaTeX\ transcription of results established by the author and collaborators. In some instances, they were also used to cross-check computations and gather results from the literature. All outputs were verified by the author, who takes full responsibility for the content. This disclosure is made in the spirit of the Leiden Declaration and the UNESCO Recommendation on Open Science.
\end{footnotesize}

\bibliographystyle{alpha}
\bibliography{biblio}

\end{document}